\documentclass[a4paper,notitlepage,twoside,reqno,11pt]{amsart}

\usepackage{mathrsfs}
\usepackage{yhmath}
\usepackage{tikz}
\usepackage[hmargin=1in,vmargin=1in]{geometry}
\usepackage{amsmath}
\usepackage{amssymb}
\usepackage{amsfonts}
\usepackage{amsthm}
\usepackage{bbm,pifont,latexsym}
\usepackage{anysize}  
\marginsize{3cm}{3cm}{3cm}{3cm}

\usepackage{dcolumn,indentfirst,color,hyperref}
\usepackage{amsmath,amssymb,amscd,amsthm,amsfonts,mathrsfs}
\usepackage{color,graphicx,xcolor,graphics}
\usepackage{titlesec} 
\usepackage{titletoc} 
\titlecontents{section}[20pt]{\addvspace{2pt}\filright}
              {\contentspush{\thecontentslabel. }}
              {}{\titlerule*[8pt]{}\contentspage}

\usepackage{fancyhdr} 
\fancyhf{}            

\fancyhead[OR,EL]{\small\thepage}
\fancyhead[OC]{\small Most Fatou and Julia components are small}
\fancyhead[OC]{\small J. Zeng}
\pagestyle{fancy}

\newtheorem{thm}{Theorem}[section]

\newtheorem{lem}[thm]{Lemma}
\newtheorem{cor}[thm]{Corollary}
\newtheorem{prop}[thm]{Proposition}

\newtheorem{defi}{Definition}[section]
\newtheorem{conj}{Conjecture}[section]

\newcommand{\wt}{\widetilde}
\newcommand{\mb}{\mathbb}
\newcommand{\mc}{\mathcal}
\newcommand{\sm}{\setminus}
\newcommand{\tu}{\textup}
\newcommand{\ol}{\overline}
\newcommand{\es}{\emptyset}

\newcommand{\tb}{\textbf}

\newcommand{\wh}{\widehat}

\newcommand{\olD}{\overline{\mathbb{D}}}

\setlength{\footnotesep}{0.5cm}

\makeatletter\@addtoreset{equation}{section}\makeatother 
\setcounter{secnumdepth}{5}
\titleformat{\section}{\centering\normalsize}{\textsc{\thesection.}}{1em}{\textsc}
\titleformat{\subsection}{\normalsize}{\thesubsection.}{1em}{\textbf}

\title
 {Most Fatou and Julia components are small for polynomials}
\begin{document}

\author{Jinsong Zeng}
\address{Jinsong Zeng, School of Mathematical Sciences, Shenzhen University, Shenzhen 518060, P. R. China}
\email{jinsongzeng@163.com}

\begin{abstract} We prove that Julia components of polynomials are generally small in diameter. For polynomials without irrationally neutral cycles, Fatou components are also typically small, even when the Julia set is not locally connected.
 \end{abstract}

\subjclass[2010]{Primary: 37F45; Secondary: 37F10}

\keywords{Polynomials; Fatou sets; Julia sets; Yoccoz puzzles.}

\maketitle
\section{Introduction}

We study the dynamics of polynomials $f:\mb{C}\to\mb{C}$ of degree $d\geq 2$. A well-known result regarding the Branner-Hubbard Conjecture states that non-trivial Julia components of polynomials are eventually periodic\cite{KS,QY}. These countably many components (if any) are everywhere dense in the Julia set. We prove that most Julia components are small. Precisely,

\begin{thm}\label{th:Julia}
Let $f:\mathbb{C}\to\mathbb{C}$ be a polynomial. For any $\epsilon>0$, there are at most finitely many Julia components of $f$ with diameters greater than $\epsilon$.
\end{thm}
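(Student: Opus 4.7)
The plan is a proof by contradiction, combining a Hausdorff-compactness argument with the polynomial-like renormalization structure carried by any non-trivial periodic Julia component, together with Yoccoz-puzzle-piece shrinking estimates.

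First, suppose towards a contradiction that there are pairwise distinct Julia components $(K_n)_{n\ge 1}$ with $\operatorname{diam}(K_n)>\epsilon$. By the Blaschke selection theorem applied in the compact set $K(f)$, some subsequence converges in Hausdorff distance to a continuum $K_\infty\subseteq J(f)$ of diameter at least $\epsilon$, which lies in a single Julia component $K^{\ast}$. Since $K^{\ast}$ is non-trivial, the theorem of Kiwi--Sutherland and Qiu--Yin cited in the introduction forces $K^{\ast}$ to be eventually periodic; after pulling the puzzle back by a finite iterate I may assume $K^{\ast}$ is periodic of some period $p$.

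Next, near $K^{\ast}$ the iterate $f^p$ carries a polynomial-like restriction: there exist Jordan domains $U\Subset V$ with $K^{\ast}\subseteq U$ such that $f^p\colon U\to V$ is polynomial-like of some degree $d^{\ast}\ge 2$, with $K^{\ast}$ as its small Julia set. Inside $U$ I would build a nested sequence of closed topological-disk puzzle pieces $P_1\supseteq P_2\supseteq\cdots\supseteq K^{\ast}$, using external rays and equipotentials of this polynomial-like map (augmented if necessary by equipotential cuts around indifferent cycles), whose boundaries meet $J(f)$ only at finitely many repelling preperiodic points and which satisfy the shrinking property
\[
\bigcap_{k\ge 1} P_k\cap J(f) \;=\; K^{\ast}.
\]
Because $\partial P_k\cap J(f)$ is a finite collection of singleton (trivial) Julia components, any Julia component meeting $P_k$ lies in its interior; Hausdorff convergence $K_n\to K_\infty\subseteq K^{\ast}$ then forces $K_n\subseteq P_k$ for every fixed $k$ and all $n\ge N(k)$.

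The decisive step combines this with the expansion of $f^p$ in the hyperbolic metric on $V\setminus K^{\ast}$, where the polynomial-like map acts as a covering of degree $d^{\ast}$. Each $K_n\ne K^{\ast}$ lies in $V\setminus K^{\ast}$, and its Hausdorff closeness to $K^{\ast}$ forces $K_n$ to arise as a pullback $K_n=\varphi_n(K')$ for some periodic Julia component $K'$ and some composite inverse branch $\varphi_n$ of an iterate $f^{-m_n}$ that involves a number of iterations of the $K^{\ast}$-fixing branch of $(f^p|_U)^{-1}$ tending to infinity. Since that fixing branch is a strict hyperbolic contraction, Koebe distortion converts this into a Euclidean contraction factor tending to $0$, so $\operatorname{diam}(K_n)\to 0$, contradicting $\operatorname{diam}(K_n)>\epsilon$. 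The principal obstacle is to establish the polynomial-like structure and the puzzle shrinking property uniformly for an arbitrary polynomial, including cases with Cremer points, parabolic cycles, or disconnected Julia set; this amounts to a component-local form of the Branner--Hubbard conjecture and is presumably the technical heart of the paper.
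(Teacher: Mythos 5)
Your high-level strategy --- Hausdorff compactness, eventual periodicity of the limit component via Qiu--Yin, and hyperbolic expansion near a periodic component --- matches the paper's, but the local machinery you introduce both over-complicates the argument and creates genuine gaps. The claim that Hausdorff closeness of $K_n$ to $K^*$ forces $K_n=\varphi_n(K')$ along an inverse branch of $f^{m_n}$ that applies the $K^*$-fixing branch unboundedly often is not justified: a priori nothing rules out infinitely many distinct \emph{periodic} Julia components accumulating on $K^*$, and $f^{m_n}|_{K_n}$ need not admit a single-valued inverse branch since critical points may lie along the orbit. The second gap you flagged yourself --- producing shrinking puzzle pieces around a polynomial-like Julia set that might contain Cremer points --- is real, and nothing in the statement of Theorem \ref{th:Julia} excludes irrationally indifferent cycles, so any argument relying on puzzle shrinking there would not prove the theorem as stated.

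The paper's proof sidesteps all of this and needs no puzzle machinery at all. After reducing to a periodic component $K$ of $K_f$ with $J_\infty\subset\partial K$, it takes an equipotential disk $D\supset K$ with $D\cap\mathrm{crit}(f)=K\cap\mathrm{crit}(f)$ and lets $D'$ be the component of $f^{-p}(D)$ containing $K$. The key step is a purely topological claim: the annulus $D\setminus\overline{D'}$ contains an arc $\gamma$ joining its two boundary circles and disjoint from $K_f$; this is possible because no component of $K_f$ can separate $\partial D$ from $\partial D'$ (the basin of infinity of a polynomial is connected). Pulling $\gamma$ back once cuts $D'\setminus\overline{D''}$ (where $D''=g^{-1}(D')$, $g=f^p$) into finitely many Jordan pieces $W_i$ of finite hyperbolic diameter in $D\setminus K$. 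Any component $J_n\neq\partial K$ lying in $D''$ is disjoint from $K=\bigcap_m g^{-m}(D')$, so some \emph{forward} iterate of $J_n$ under $g$ lands inside a preimage of one $W_i$; Riemann--Hurwitz plus expansion of $g$ then bound the hyperbolic diameter of every such $J_n$ by the maximum of those of the $W_i$, forcing Euclidean diameters to zero. This forward-escape argument imposes no hypothesis on cycles, which is why Theorem \ref{th:Julia}, unlike Theorem \ref{th:Fatou}, holds for all polynomials; the puzzle apparatus of the paper is reserved entirely for the Fatou-component theorem.
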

Theorem \ref{th:Julia} is not true for rational maps. McMullen provided an example of a rational Julia set homeomorphic to a Cantor set of circles, containing uncountably many components with diameters bounded below by some $\epsilon_0>0$ \cite{Mc}.

For polynomials, Roesch and Yin showed that every bounded Fatou component not eventually onto a Siegel disk is a Jordan domain \cite{RY}. While Siegel disks, i.e., Fatou components on which the map is conjugate to irrational rotations, are conjectured to be Jordan domain. We focus on the asymptotic shrinking of non-Siegel Fatou components. The main result is that 
\begin{thm}\label{th:Fatou}
Let $f:\mathbb{C}\to \mathbb{C}$ be a polynomial without irrationally neutral cycles. For any $\epsilon>0$, there are at most finitely many Fatou components of $f$ with diameters greater than $\epsilon$.
\end{thm}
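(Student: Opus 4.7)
The plan is to combine Theorem~\ref{th:Julia} with a Yoccoz-puzzle analysis, splitting on whether $J(f)$ is disconnected or connected. First I would reduce to pre-periodic Fatou components: by Sullivan's non-wandering theorem every Fatou component is eventually periodic, and the hypothesis leaves only attracting, super-attracting, parabolic, and $\infty$-basin cycles, each non-infinite one absorbing a critical point, hence at most $d-1$ of them. By Roesch--Yin \cite{RY} every bounded Fatou component is a Jordan domain. It thus suffices to prove, for each bounded periodic cycle $\mathcal{V}$ and each $\epsilon>0$, that only finitely many Fatou components eventually mapping into $\mathcal{V}$ have diameter greater than $\epsilon$.

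If $J(f)$ is disconnected, any bounded Fatou component $U$ lies inside a simply connected component $K$ of the filled Julia set, so $\mathrm{diam}(U)\le\mathrm{diam}(K)=\mathrm{diam}(\partial K)$ with $\partial K$ a Julia component. Theorem~\ref{th:Julia} then forces only finitely many such $K$ to have diameter greater than $\epsilon$. For each such (eventually periodic) component $K$, an iterate $f^p$ extends to a polynomial-like map on a neighborhood of $K$ whose Douady--Hubbard straightening is a polynomial $g$ of degree $d'<d$---strictness coming from the fact that $J(f)$ being disconnected forces at least one critical point of $f$ into the basin of $\infty$ and hence out of the polynomial-like domain. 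Under the quasiconformal conjugation, diameters of Fatou components of $f$ inside $K$ and of $g$ inside its filled Julia set agree up to a bounded factor, so an induction on the degree closes this case.

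If $J(f)$ is connected, I would build a Yoccoz puzzle from an equipotential in the basin of $\infty$ together with finitely many external rays landing at repelling periodic points, chosen so that the puzzle graph is forward invariant; the no-irrationally-neutral hypothesis is what makes this choice possible. The core step is a shrinking dichotomy for puzzle pieces: for each $\epsilon>0$ there is a depth $N$ so that every puzzle piece $P$ at depth $\geq N$ either satisfies $\mathrm{diam}(P)<\epsilon$ or is contained in one of finitely many renormalization pieces, each supporting a small filled Julia set for a polynomial-like restriction of some iterate $f^p$. Any Fatou component not eventually trapped in a renormalization piece lies in a puzzle piece of diameter less than $\epsilon$ once its depth exceeds $N$, leaving only finitely many exceptions; the Fatou components trapped in renormalization pieces reduce, via straightening, to the same problem for a polynomial of lower renormalization depth---in the infinitely renormalizable case the successive renormalization domains themselves shrink to a point, so only finitely many of them have diameter greater than $\epsilon$---and an induction finishes the proof.

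The hard part will be establishing the shrinking dichotomy in the connected case. Its standard ingredients are Branner--Hubbard--Yoccoz annulus telescoping to force moduli growth along nonrecurrent chains of pieces, the Kahn--Lyubich covering lemma for distortion control near the recurrent postcritical set, and a separate treatment of puzzle pieces meeting parabolic basin boundaries---where expansion is lost and one typically enlarges the puzzle graph with auxiliary arcs from attracting Fatou coordinates, so that preimages of the parabolic flower shrink uniformly. Non-local-connectedness of $J(f)$ is compatible with the approach because it only requires shrinking diameters for the (Jordan) boundaries of concrete Fatou components, not for arbitrary fibers of a Carath\'eodory semiconjugacy.
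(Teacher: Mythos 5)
Your outline correctly identifies much of the machinery the paper uses: reduce to connected $J_f$ via Theorem~\ref{th:Julia}, build Yoccoz puzzles from an equipotential and periodic external rays, show shrinking of wandering pieces via Branner--Hubbard--Yoccoz annulus telescoping and the Kahn--Lyubich complex bounds, and treat parabolic boundaries separately. That part tracks the paper's Section~3 (Lemmas~\ref{lem:singleton}--\ref{lem:successor}, Theorem~\ref{thm:recurrent}). Where the proposal goes wrong is precisely the renormalizable case, which is the heart of the theorem.

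You assert that ``in the infinitely renormalizable case the successive renormalization domains themselves shrink to a point.'' That is exactly what fails for the maps the theorem is designed to cover. Infinitely renormalizable polynomials with connected but non-locally-connected Julia sets (Milnor's examples, cited in the paper's introduction) have nested renormalization puzzle pieces shrinking to a \emph{nontrivial} continuum --- an ``end'' $\mathbf{e}$ in the paper's terminology, a fiber in Kiwi's. The Fatou components whose diameters must be controlled accumulate on $\mathbf{e}$ from outside: they live in the decorations of $\mathbf{e}$, not inside the renormalization domains. Straightening the renormalization to a lower-depth polynomial and inducting therefore does not see them at all. There is no ``reduction to a polynomial of lower renormalization depth'' for that portion of the dynamics, and your degree/depth induction cannot close.

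The paper handles exactly this case by a different mechanism. It fixes two nested (possibly thickened) puzzle pieces $P_N \Subset P_{N'}$ with $\mathbf{e} = \bigcap_n \overline{P_n}$ and analyzes the covering $g = f^k\colon P_N \setminus \mathbf{e} \to P_{N'} \setminus \mathbf{e}$ between the two annuli. The key input is Corollary~\ref{cor:disjoint}: under the no-irrationally-neutral hypothesis, every bounded Fatou component is disjoint from any nontrivial end. This is what makes each $\overline{\Omega_i}$ have \emph{finite} hyperbolic diameter in $D \setminus \mathbf{e}$, and then expansion of $g$ for that hyperbolic metric gives a uniform bound on those diameters, forcing Euclidean diameters to zero. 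Note that your sketch uses the no-irrationally-neutral hypothesis only to ensure periodic rays land at repelling points for the puzzle graph; that hypothesis actually does its heaviest work in Proposition~\ref{prop:key} and Corollary~\ref{cor:disjoint}, in exactly the step your proposal lacks. To repair the argument you would need a substitute for the disjointness statement and the annular hyperbolic-expansion estimate; straightening plus induction does not supply one.
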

Kiwi \cite{Ki} proved that for polynomials with connected Julia sets and no irrationally neutral cycles, local connectivity of the Julia set is equivalent to the absence of wandering non-trivial connected Julia subsets. Infinitely renormalizable polynomials with connected but non-locally connected Julia sets exist \cite{Mlc}; Theorem \ref{th:Fatou} demonstrates that most Fatou components remain small despite such complexities.


The local connectivity of Julia sets is crucial in holomorphic dynamics. Partly because the dynamics on locally connected Julia sets can be accurately described via symbolic dynamics. It is a fruitful area of research on polynomials, see \cite{Mlc,Pe,KL2,KS,Le,LG,Ya,CST} and others.

Roesch obtained the breakthrough result for rational maps in this area. She proved that except for some special cases, cubic Newton maps have locally connected Julia sets. This includes some maps that are infinitely renormalizable or contain Cremer points, unlike the case for polynomials \cite{Roe}. These findings have been generalized to high degree Newton maps, as discussed in \cite{DS,RYZ}. In \cite{QWY} they showed that the Julia sets of McMullen maps are locally connected in many cases.

Recently, it has been proven in \cite{WYZZ, FY} that the Julia sets of a number of rational maps with bounded type Siegel disks are locally connected. Their approach goes by analyzing the contraction property when pulling back certain disks near the boundaries of the bounded type Siegel disks.

Theorem \ref{th:Fatou} has significant applications in the study of local connectivity of rational Julia sets. Specifically, a renormalizable rational map usually contains copies of polynomial Fatou components. By Whyburn's criterion, to establish the local connectivity, it is necessary to demonstrate that the majority of these components are small. As a consequence of Theorem \ref{th:Fatou}, Julia sets of Newton maps without irrationally neutral cycles are locally connected. Compare \cite[Corollary 1.2]{DS}.

A key point in the proof of Theorem \ref{th:Fatou}, as stated in Corollary \ref{cor:disjoint}, is the disjointness of bounded Fatou components and non-trivial \emph{ends} $E$ in their boundaries. This guarantees the finiteness of hyperbolic diameters of Fatou components with respect to the domain $\mb{C}\sm E$. Nevertheless, when $f$ has irrationally neutral cycles, this property no longer holds.

In the case of a cubic Newton map $f$ that is exactly once renormalizable and has a Cremer point, similar difficulties occur: the components of root basins, acting as ``carrots", intersect the quadratic Julia set copy at the iterated preimages of the  $\beta$-fixed point in the boundary, resulting in each carrot having an infinite hyperbolic diameter. Intuitively, the shrinking of carrots depends on the dynamic behavior of the critical point in the Julia set copy. In the proof of \cite[Corollaries 1.2 and 1.3]{DS} the convergence of carrot diameter does not seem to be discussed. Generally, we propose the following conjecture.

\begin{conj}
	Let $f:\mathbb{C}\to\mathbb{C}$ be a polynomial. Then there are at most finitely many Fatou components of $f$ with diameters greater than $\epsilon$ for any $\epsilon>0$.
\end{conj}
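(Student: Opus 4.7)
The plan is to extend Theorem \ref{th:Fatou} by separately handling the Fatou components produced or obstructed by irrationally neutral cycles. Using Sullivan's no-wandering-domains theorem and the classification of periodic Fatou components, it suffices to fix a periodic Fatou cycle containing some $U$ and to bound the number of preimages $f^{-n}(U)$ of diameter larger than $\epsilon$; summing over the finitely many cycles then gives the conclusion. The attracting, super-attracting, and parabolic cycles are already covered by the proof of Theorem \ref{th:Fatou}, and Cremer cycles do not themselves produce Fatou components, so the genuinely new tasks are (i) bounding the preimages of Siegel cycles, and (ii) controlling those basin components whose Yoccoz puzzle construction is spoiled either by a nearby Cremer cycle or by the boundary of a Siegel disk.

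For task (i), under a bounded-type hypothesis on the rotation number I would import the contraction mechanism of \cite{WYZZ, FY}: those works produce a definite hyperbolic contraction when pulling back a disk across a fixed neighborhood of a bounded-type Siegel boundary. Combined with Koebe distortion on the hyperbolic portion of each orbit, this should give exponential decay of the diameters of the preimages of the Siegel disks, which is more than enough for finiteness of the set of diameter $>\epsilon$. For unbounded rotation numbers the analogous contraction would rely on the Inou--Shishikura / Cheraghi near-parabolic renormalization and its a priori bounds; implementing this quantitatively is the missing ingredient in the Siegel part.

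For task (ii), the key obstruction flagged in the introduction is that a Cremer cycle lies on the boundary of its basin, so the non-trivial ends $E$ intersect a Fatou component and destroy the disjointness statement behind Corollary \ref{cor:disjoint}; the $\mathbb{C}\sm E$-hyperbolic diameter of the component then becomes infinite. My remedy would be to replace each such end by a thin \emph{protective carrot} around the Cremer periodic point, chosen to be enclosed by a definite modulus annulus that separates it from every sufficiently deep puzzle piece; one then re-runs the Gr\"otzsch-inequality argument of Theorem \ref{th:Fatou} with the carrot playing the role of $E$. The main obstacle, and where I expect the argument to get stuck, is precisely this modulus estimate: if the critical orbit accumulates on the Cremer point the nested annuli around the carrot can degenerate, and a quantitative version of Perez-Marco's stability theory (or an Inou--Shishikura style a priori bound around Cremer fixed points) seems to be needed to secure a uniform lower bound on their moduli. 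This is exactly the difficulty noted after Corollary \ref{cor:disjoint} and is the reason the statement is posed only as a conjecture.
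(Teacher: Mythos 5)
The statement you have been asked to prove is posed in the paper as an open conjecture, with no proof given, so there is no proof of record to compare your attempt against. What you have written is correctly framed as a roadmap rather than a proof, and your diagnosis of the two obstructions agrees closely with the paper's own remarks: it is indeed the disjointness of bounded Fatou components from non-trivial ends (Corollary~\ref{cor:disjoint}) that makes the $\mathbb{C}\setminus E$-hyperbolic diameters finite, and it is exactly this that the paper's ``carrot'' example around a once-renormalizable Newton map with a Cremer point shows to fail.

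Two corrections nonetheless. First, the phrase ``a Cremer cycle lies on the boundary of its basin'' is a misstatement---Cremer points have no basin. The actual obstruction is that Proposition~\ref{prop:key} (triviality of satellite ends) uses the absence of irrationally neutral cycles in an essential way: with a Cremer fixed point inside the nested intersection $K$, the degree count on the induced circle map no longer forces $K$ to be a singleton, so the satellite end $\tb{e}(z)$ can be a non-trivial hedgehog, and that hedgehog can meet the closures of Fatou components, killing Corollary~\ref{cor:disjoint}. Second, and more seriously, your task~(i) understates the Siegel difficulty. The whole puzzle construction of Section~2, including the graph $\Gamma$ of \eqref{eq:gamma}, rests on the Roesch--Yin limb decomposition (Theorem~\ref{thm:RY}), which is proved only for attracting and parabolic $U$; the construction of $\Gamma_U$ via $(f|_U)^{-N}(D)$ makes no sense on a Siegel disk, and Siegel boundaries are not even known to be Jordan curves in general, so no analogue of the landing roots or internal graph exists. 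The contraction mechanism of \cite{WYZZ,FY} (and, for unbounded rotation numbers, near-parabolic renormalization a priori bounds) could only enter after such a puzzle framework has been built---and building it is itself an open problem. Your sketch therefore correctly locates the two hard points but, as you honestly acknowledge, leaves both of them open, which is precisely why the paper states the result as a conjecture.
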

There are many parallel phenomenons in parameter spaces. For instance, fix a strictly copy $\mc{M}_0$ of the Mandelbrot set $\mc{M}$, components of $\mathcal{M}\sm \mathcal{M}_0$ are referred to as \emph{carrots}; each of them intersects $\mc{M}_0$ at a dyadic tip of $\mc{M}_0$ in its closure \cite{DH2,PR}. Petersen-Roesch and Dudko independently established the shrinking of these carrots in size \cite{Du,PR}.
\vskip 0.3cm
\emph{Sketch of the proof of Theorem \ref{th:Fatou}.} Let $\{X_n\}$ be a sequence of the closure of bounded Fatou components that converges to a continuum $X_\infty$ in the Hausdorff topology. Without loss of generality, we assume that the Julia set is connected and periodic Fatou components are fixed.

We begin by constructing the Branner-Hubbard-Yoccoz puzzles from invariant graphs, which are mainly composed of external rays landing at biaccessible fixed points and rays in bounded Fatou components. The infinite intersections of nested closed puzzle pieces decompose the entire Julia set into dynamical invariant ends, namely satellite and primitive ends. The limit set $X_\infty$ must be contained in one of these ends.

We apply standard distortion lemmas for non-persistently recurrent critical ends and well-known results on the existence of complex bounds for persistently recurrent critical ends to show the triviality of wandering ends. Consequently, we can assume that $X_\infty$ belongs to some periodic non-trivial end $\tb{e}$.

We find two strictly nested (thickened) puzzle pieces $P_0$ and $P_1$ containing $\tb{e}$ and study the dynamics of $g=f^k:P_1\setminus\tb{e}\to P_0\setminus\tb{e}$ restricted to the two annuli. A key point is that all $X_n$ are disjoint from $\tb{e}$ under the assumption of the absence of irrationally neutral cycles. Therefore, the hyperbolic diameter of each $X_n$ is finite. The weaker expansivity of $g$ would imply the uniform boundedness of these hyperbolic diameters. As a result, the Euclidean diameter has to approach zero.

\section{Puzzles and ends}
In this section, we will construct a specific Branner-Hubbard-Yoccoz puzzle for polynomials and discuss the nested properties of puzzles, as well as the dynamics around periodic ends.

As a start, we recall some classical facts from \cite{DH1}; See also \cite{M1}. Let $f:\mb{C}\to\mb{C}$ be a polynomial of degree $d\geq 2$. The \emph{filled Julia set} is defined as $$K_f=\{z\in\mb{C}: f^n(z)\tu{ is bounded}\},$$ whose boundary is the Julia set $J_f$. The \emph{basin of infinity} $B_\infty$ is the complement of $K_f$. When $K_f$ is connected, the polynomial $f$ is conjugated to $z\mapsto z^d$ on $B_\infty$. The conjugacy $\phi:B_\infty\to \mb{C}\sm \mb{D}$ tangent to the identity at infinity is called the \emph{B\"{o}ttcher coordinate} of $f$ on $B_\infty$. 

The \emph{external ray} of argument $\theta$ for $B_\infty$ is defined as $R(\theta)=\phi^{-1}(]1,\infty[e^{2\pi i \theta})$. Every periodic external ray
lands, i.e., $\lim_{r\to 1}\phi(re^{2\pi i\theta})$ exists. The landing point is either repelling or parabolic. Conversely, a repelling or parabolic point receives at least one and a finite number of external rays, all of which are necessarily periodic of the same period.  The preimage $\phi^{-1}(\{e^{r+2\pi i\theta}:0\leq \theta\leq 2\pi\})$ is called the \emph{equipotential curve} of level $r>0$.  
 
Associated to a periodic non-Siegel Fatou component,  P. Roesch and Y. Yin provided a decomposition of the entire filled Julia set by limbs \cite{RY}.
   
\begin{thm}{\cite[Theorems A and B]{RY}}\label{thm:RY}
	Let $f$ be a polynomial with connceted Julia set. Let $U$ be a bounded periodic Fatou component of $f$ that is either attracting or parabolic. Then $\partial U$ is a Jordan curve. Moreover, the filled Julia set $K_f=\ol{U}\cup\bigsqcup_{t\in \mathbb{S}^1}L_t$ with the following properties:
	
	\tu{(1)} $L_t$ is compact and connected;
	
	\tu{(2)} $L_t\cap \ol{U}$ is one point, denoted by $\gamma(t)$;
	
	\tu{(3)} If $L_t\neq \gamma(t)$, two external rays land at $\gamma(t)$ that separate $L_t$ from $\ol{U}$; In this case, $L_t$ is called a \emph{limb} of $K_f$ and $\gamma(t)$ is called the \emph{root} of the limb. 
	
	\tu{(4)} If $L_t\neq \gamma(t)$, then $L_t$ is eventually bijectively mapped onto a limb that contains a critical point of $f$.
\end{thm}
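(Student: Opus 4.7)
The plan is to combine an internal parameterization of $\partial U$ with Branner--Hubbard--Yoccoz puzzle techniques to force the diameters of puzzle pieces along $\partial U$ to shrink, thereby establishing that $\partial U$ is a Jordan curve. After passing to an iterate I assume $U$ is fixed. In the geometrically attracting case I linearize $f$ on $U$ by a B\"ottcher or Koenigs coordinate, and in the parabolic case by a Fatou coordinate. In either situation one obtains an internal ray structure in $U$ parameterized by $\mathbb{S}^1$, together with finitely many periodic rays terminating at the repelling or parabolic periodic points of $\partial U$ that receive internal rays.

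Next I form an $f$-invariant graph $\Gamma$ built from a high equipotential, the finitely many external rays landing at those periodic points of $\partial U$, and the corresponding internal rays in $U$ with matching landing points. The connected components of $\mathbb{C}\setminus f^{-n}(\Gamma)$ form nested puzzle pieces, and the crucial step is to prove that puzzle pieces meeting $\partial U$ have Euclidean diameters tending to zero. When the critical orbits stay away from $\partial U$ this follows from a Koebe distortion and normal family argument; in the recurrent case one applies the standard Branner--Hubbard--Yoccoz modulus dichotomy together with complex bounds to rule out an infinitely renormalizable obstruction adjacent to $\partial U$. Shrinking of the puzzle pieces, combined with the internal parameterization, yields local connectivity of $\partial U$ and hence a homeomorphism $\gamma\colon\mathbb{S}^1\to\partial U$ conjugating $f|_{\partial U}$ to multiplication by the return degree.

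With $\partial U$ a Jordan curve I define $L_t$ as $\{\gamma(t)\}$ together with those components of $K_f\setminus \overline{U}$ whose closures meet $\partial U$ only at $\gamma(t)$. Closedness and connectedness give (1), and (2) is then immediate. If $L_t\neq\{\gamma(t)\}$, the puzzle geometry forces $\gamma(t)$ to be a preperiodic biaccessible point; the two external rays landing at $\gamma(t)$ together with the point itself form a Jordan cut of $\mathbb{C}$ separating $L_t$ from $\overline{U}$, establishing (3).

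The main obstacle is (4). The plan is to analyze the forward orbit of a non-trivial limb $L_t$. If no critical point ever enters the forward iterates of a thickened puzzle piece enclosing $L_t$, then the corresponding restrictions of $f^n$ are univalent and pullback along the orbit contracts in the hyperbolic metric of the enclosing pieces. The standard Branner--Hubbard annular modulus estimate would then force the Euclidean diameter of $L_t$ to vanish, contradicting its non-triviality. Consequently the forward orbit of $L_t$ must eventually hit a limb containing a critical point, and bijectivity of the intermediate iterates follows from the puzzle combinatorics together with the absence of critical points in those intermediate pieces.
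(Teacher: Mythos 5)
The paper does not prove Theorem~\ref{thm:RY}: it is quoted verbatim from Roesch--Yin \cite[Theorems~A and~B]{RY} and used as an external input to the puzzle construction in Section~2. There is therefore no ``paper's own proof'' to compare your attempt against; what you have produced is a blind reconstruction of (an outline of) the Roesch--Yin argument itself, which is a substantial theorem in its own right and well beyond the scope of the present paper.

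That said, a few remarks on the sketch. Your overall strategy --- internal rays from a linearizing or Fatou coordinate, an invariant graph $\Gamma$ spliced from internal and external rays, BHY puzzles, and shrinking of pieces meeting $\partial U$ --- is indeed of the same general flavor as the literature. But there is a genuine logical gap at the crux of Theorem~A: local connectivity of $\partial U$ (which is what shrinking puzzle pieces gives you via the Carath\'eodory/Whyburn criterion) yields only a \emph{continuous surjection} $\gamma\colon\mathbb{S}^1\to\partial U$ from the Riemann map of $U$, not a homeomorphism. The content of the Jordan-curve statement is precisely that this boundary extension is \emph{injective}, i.e.\ that $\partial U$ has no pinch points, and this does not follow from local connectivity alone. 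Ruling out such identifications is the main technical difficulty in \cite{RY} and your sketch passes over it silently. Similarly, in (4) the reduction ``no critical point ever enters the forward iterates of a thickened puzzle piece enclosing $L_t$'' is not the right dichotomy: a critical point can enter the enclosing puzzle pieces without entering the forward images of the limb $L_t$ itself, so the univalent-pullback contraction argument as stated does not directly force a critical point into the orbit of the \emph{limb}. Since this theorem is cited rather than proved in the paper, these gaps do not affect the paper, but they would need to be filled for your sketch to constitute a proof.
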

\subsection{Puzzles}\label{subsection}
Throughout this section, we assume $f$ is a polynomial of degree $d\geq 2$ having connected Julia set $J_f$ and without irrationally neutral cycles; By replacing $f$ by certain iterate $f^n$, we adopt the following assumptions:

(A1) periodic Fatou components are fixed by $f$;

(A2) periodic roots are fixed by $f$; They are finitely many by Theorem \ref{thm:RY}.

To create puzzles, we introduce a graph $\Gamma$ that partitions $J_f$ into pieces. Let $U$ be a fixed bounded Fatou component.

When $U$ is attracting, take a Jordan disk $D\subset U$ containing the only fixed point in $U$ such that $f(D)\Subset D$, and $\partial D$ avoids the critical orbits. Then a component of $(f|_U)^{-N}(D)$, for a large integer $N$, contains all critical points in $U$. This component, denoted by $D_N$, is a Jordan disk in $U$.

By a classic result from complex dynamics, there exists an arc $\gamma:[0, 1)\to U\setminus f(D_N)$ such that $\gamma\subset f(\gamma)$ and $\gamma$ joins $\partial f(D_N)$ to a fixed point (i.e., $\tu{lim}_{t\to1}\gamma(t)$ exists).  We define $$\Gamma_U=\partial D_N\cup \ol{(f|_U)^{-1}(\gamma)},$$
where $(f|_U)^{-1}(\gamma)$ consists of exactly $\tu{deg}(f|_U)$ arcs in $U\sm D_N$. 

When $U$ is parabolic, take an attracting petal $D\subset U$ of the parabolic point in $\partial U$ and ensure that $\partial D$ avoids critical orbits in $U$. For a large $N$, a component of $(f|_U)^{-N}(D)$ covers all critical points in $U$, which is denoted by $D_N$. We define $\Gamma_U=\partial D_N$. In this case, $\Gamma_U$ is a Jordan curve that intersects $\partial U$ in a maximum of $\mathrm{deg}(f|_U)^N$ pre-parabolic points.

Let $E$ be an equipotential curve around the filled Julia set $K_f$. We may define the graph as follows
\begin{equation}\label{eq:gamma}
\Gamma=E\cup \bigcup \Gamma_U\bigcup \ol{R},
\end{equation}
where $R$ runs over all external rays that land at $\bigcup\Gamma_U$ or \emph{biaccessible} fixed points. By a \emph{biaccessible} point we mean a point receiving at least two external rays.  Note that under Assumptions (A1) and (A2), periodic roots are biaccessible and contained within $\Gamma$, with external rays landing at them remaining fixed.
  
\begin{defi}
	The \emph{puzzle pieces} of \emph{depth} $n$ are the connected components of $\mb{C}\sm f^{-n}(\Gamma)$ that intersect $J_f$ for any $n\geq 0$. 
\end{defi}
By construction, puzzle pieces are Jordan domains, and two distinct puzzle pieces are either disjoint (that is, $P\cap Q=\es$) or nested (that is, $P\subset Q$ or $Q\subset P$). 
In particular, when $P\subset Q$, then $P\Subset Q$ if and only if $\partial P$ and $\partial Q$ have no common points in $J_f$, since rays do not cross.

\begin{lem}\label{lem:puzzles}
	Let $P$ be a puzzle piece of depth $\geq 1$, whose boundary contains a fixed point $z$, such that $P\subset f(P)$. Then there is a small open disk $\Delta\subset P$ with $\partial \Delta$ formed by 
	
	\tu{(1)} an arc $\alpha$ in $\partial P$ passing through $z$, and
	
	\tu{(2)} another arc in $P$ joining the two endpoints of $\alpha$,
	
	\noindent such that $f(\partial \Delta)\cap \Delta=\es$. 	
\end{lem}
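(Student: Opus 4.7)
The plan is to exploit the local dynamics of $f$ at the fixed point $z$. Under the standing hypothesis that $f$ has no irrationally neutral cycles, $z\in\partial P\cap J_f$ is either repelling (with $|f'(z)|>1$) or parabolic; after passing to a further iterate of $f$ if necessary I may assume $f'(z)=1$ in the parabolic case. Since $P$ is a Jordan domain with $z\in\partial P$, the boundary $\partial P$ is locally at $z$ a single arc, so $P$ near $z$ fills a definite sector bounded by two sub-arcs of $f^{-n}(\Gamma)$ meeting at $z$.

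In the repelling case I would invoke Koenigs' linearization theorem to obtain a biholomorphism $\psi$ of a neighborhood $V$ of $z$ onto its image in $\mb{C}$ with $\psi(z)=0$ and $\psi\circ f=\lambda\cdot\psi$, where $\lambda=f'(z)$ satisfies $|\lambda|>1$. For $\delta>0$ sufficiently small I then set
\[
\Delta=\{w\in P\cap V:|\psi(w)|<\delta\},
\]
which is a Jordan subdomain of $P$ whose boundary decomposes as $\alpha\cup\beta$, with $\alpha\subset\partial P$ an arc through $z$ and $\beta=\ol{P}\cap\psi^{-1}(\{|\zeta|=\delta\})$ the ``radial'' arc in $P$ joining the endpoints of $\alpha$. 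The condition $f(\partial\Delta)\cap\Delta=\es$ splits into two checks. First, since $f\colon\ol{P}\to\ol{f(P)}$ is a proper branched cover it sends $\partial P$ into $\partial f(P)$, so $f(\alpha)\subset\partial f(P)$ is disjoint from the open set $\Delta\subset P\subset f(P)$. Second, $|\psi(f(\beta))|=|\lambda|\delta>\delta$, so $f(\beta)$ lies outside $\psi^{-1}(\{|\zeta|<\delta\})$ and hence outside $\Delta$.

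In the parabolic case the construction is identical in spirit but uses Fatou coordinates in place of Koenigs'. A preliminary geometric observation is that the local sector of $P$ at $z$ must lie within a single repelling petal at $z$: each attracting petal is contained in a Fatou component, so $P\cap J_f\neq\es$ rules out attracting sectors, and moreover $f$ contracts attracting sectors toward $z$, incompatible with $P\subset f(P)$. On the ambient repelling petal a Fatou coordinate $\phi$ realizes $f$ as the translation $w\mapsto w-1$ and maps the petal biholomorphically onto a right half-plane with $z$ corresponding to $+\infty$. I then take $\Delta=\{w\in P:\tu{Re}\,\phi(w)>T\}$ for $T$ large, with $\alpha\subset\partial P$ and $\beta\subset P$ defined analogously. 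The same two observations --- $f(\alpha)\subset\partial f(P)$ is disjoint from $\Delta\subset f(P)$, and $f(\beta)$ lies on the curve $\{\tu{Re}\,\phi=T-1\}$ disjoint from $\Delta$ --- yield $f(\partial\Delta)\cap\Delta=\es$.

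The main subtlety I expect is the parabolic case, specifically the claim that the sector of $P$ at $z$ sits in a single repelling petal. This should follow from a short combinatorial inspection of which arcs of $\Gamma$ form $\partial P$ at $z$: the external rays land at $z$ through its repelling petals, while the arcs of $\Gamma_U$ lie inside the attracting petals contained in $\ol U$, and the hypothesis $P\cap J_f\neq\es$ excludes sectors entirely contained in a Fatou component.
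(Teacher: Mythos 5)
Your proof follows the same route as the paper's: in both the repelling and the parabolic case one cuts a small sector $\Delta$ off $P$ near $z$ along a level curve of a local linearizing coordinate (a Koenigs circle, resp.\ a Fatou--coordinate line; the paper phrases this as the boundary of a small forward-semi-invariant, resp.\ flower, neighborhood $\mathcal N$ of $z$), decomposes $\partial\Delta=\alpha\cup\beta$ with $\alpha\subset\partial P$, and checks that $f(\alpha)\subset\partial f(P)$ avoids $\Delta\subset f(P)$ while the local repelling dynamics pushes $f(\beta)$ out of the cut-off level set. So the approach is essentially the paper's.

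The one step you should tighten --- handled tersely in the paper too --- is the parabolic claim that $P\cap\mathcal N$ sits in a single repelling petal. Your three observations (rays lie in repelling petals, $\Gamma_U$-arcs lie in $\overline U$, and $P\cap J_f\neq\emptyset$) do not yet exclude the ``straddling'' sector bounded on one side by a fixed external ray and on the other by an arc of $f^{-n}(\Gamma_U)\subset U$, and the blanket statement that ``$f$ contracts attracting sectors, incompatible with $P\subset f(P)$'' does not apply to such a mixed sector. What rescues the claim is the observation that the $\Gamma_U$-wall of $P$ is an arc of $f^{-n}(\partial D_N)$, hence lies outside $D_N$; if one therefore shrinks the attracting petals of $\mathcal N$ until they are bounded by $f^m(\Gamma\cap U)$ for large $m$ (exactly the normalization the paper imposes when constructing $\mathcal N$), then $\mathcal N\cap P$ misses the attracting petals entirely and does sit inside a repelling petal, after which your Fatou-coordinate translation argument closes the case.
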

\begin{proof}
	The fixed point $z$ is repelling or parabolic. All external rays landing at $z$ are fixed and contained in $\Gamma$. If $z$ is repelling, in a disk neighborhood $\mc{N}$ of $z$ with $f(\mc{N})\subset \mc{N}$, the graph $\Gamma\cap \mc{N}$ is $f$-invariant in the sense that $$\mc{N}\cap\Gamma\subset f(\mc{N}\cap \Gamma).$$
	Let $\Delta$ be the component of $\mc{N}\sm \Gamma$ in $P$ whose boundary contains $z$. Then $\Delta$ is as required.
	
	If $z$ is parabolic, let $\mc{N}$ be a small neighborhood of $z$ such that $\mc{N}\sm\{z\}$ is formed by repelling and attracting petals; See \cite[Theorem 10.7]{M1}. We may require that the petal in an immediate attracting basin $U$ of $z$ is bounded by $f^n(\Gamma\cap U)$ for a large $n$. 
	
	Claim that the disk $\Delta=\mc{N}\cap P$ is as required. Indeed, two immediate attracting basins of $z$, if they exist, are separated by a pair of fixed external rays landing at $z$. Thus $\Delta$ belongs to one of the repelling petals of $\mc{N}$ whose boundary contains the arc $\beta=\partial \Delta\cap P$. Then $f(\beta)\cap \Delta=\es$. Let $\alpha=\partial\Delta\sm \beta\subset \partial P$. Then $f(\alpha)$ is contained in the boundary of the puzzle piece $f(P)$. Since $P\subset f(P)$, we have $f(\alpha)\cap P=\es$. Thus $f(\alpha)\cap \Delta=\es$. The proof is complete.  
\end{proof}

A puzzle piece is always properly mapped onto another puzzle piece with one depth less through the map $f$. If the two puzzle pieces are nested, we can determine the degree by counting the number of fixed points.

\begin{lem}\label{lem:fixed-points}
	Let $P$ be a puzzle piece of depth $\geq 1$. If $P\subset f(P)$, then $\ol{P}$ contains exactly $\tu{deg}(f|_P)$ distinct fixed points.
\end{lem}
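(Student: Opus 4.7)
The plan is to reduce the count to zeros of a holomorphic difference on the unit disk and then invoke a Rouch\'e--Hurwitz argument. Since $P$ and $f(P)$ are Jordan domains, I would fix Riemann mappings $\phi\colon P\to\mathbb{D}$ and $\psi\colon f(P)\to\mathbb{D}$, extended continuously to the closures by Carath\'eodory's theorem. Set $B:=\psi\circ f\circ\phi^{-1}\colon\overline{\mathbb{D}}\to\overline{\mathbb{D}}$, a finite Blaschke product of degree $d:=\deg(f|_P)$, and $\tau:=\psi\circ\phi^{-1}\colon\overline{\mathbb{D}}\to\overline{\mathbb{D}}$, a univalent self-map of $\mathbb{D}$ whose image $\tau(\mathbb{D})\subset\mathbb{D}$ reflects the inclusion $P\subset f(P)$. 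A point $z\in\overline{P}$ satisfies $f(z)=z$ precisely when $B(\phi(z))=\tau(\phi(z))$, so fixed points of $f$ in $\overline{P}$ are in natural bijection with zeros of $B-\tau$ in $\overline{\mathbb{D}}$.

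Next, I would enumerate these zeros. On $\partial\mathbb{D}$ we have $|B|\equiv 1$ while $|\tau|\leq 1$, with $|\tau(w)|=1$ only at the finitely many images $\phi(z_i)$ of the fixed points $z_1,\ldots,z_k$ of $f$ lying on $\partial P$ (necessarily repelling or parabolic). For each $t\in(0,1)$, $|(1-t)\tau|\leq 1-t<1=|B|$ on $\partial\mathbb{D}$, so Rouch\'e's theorem applied to $B$ and $B-(1-t)\tau$ gives exactly $d$ zeros of $B-(1-t)\tau$ in $\mathbb{D}$, matching the $d$ zeros of $B$. Letting $t\to 0^+$ and invoking Hurwitz's theorem yields $d$ zeros of $B-\tau$ in $\overline{\mathbb{D}}$ counted with multiplicity, with any accumulation on $\partial\mathbb{D}$ occurring only at the $\phi(z_i)$'s.

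To upgrade to the distinct count, I would verify that each zero is simple using Lemma~\ref{lem:puzzles}. Interior fixed points of $f$ in $P$ are attracting, super-attracting, or repelling --- Cremer and Siegel fixed points are excluded by the standing hypothesis --- so $f'\neq 1$ and these are simple zeros. For a boundary fixed point $z_i$, Lemma~\ref{lem:puzzles} provides a small disk $\Delta_i\subset P$ with $z_i\in\partial\Delta_i$ and $f(\partial\Delta_i)\cap\Delta_i=\emptyset$; after shrinking $\Delta_i$ so that $\overline{\Delta_i}$ meets $\mathrm{Fix}(f)$ only at $z_i$, the restriction $f|_{\Delta_i}$ is univalent (as $z_i$ is non-critical), and the condition $f(\partial\Delta_i)\cap\Delta_i=\emptyset$ forces $\Delta_i\subset f(\Delta_i)$ with $z_i$ as the unique fixed point in $\overline{\Delta_i}$. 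The main obstacle I anticipate is the parabolic case with multiplier $1$, where $f-\mathrm{id}$ algebraically has a zero of order $\geq 2$ at $z_i$; Lemma~\ref{lem:puzzles}'s construction tailored to a repelling petal (rather than a generic disk around $z_i$) is precisely what is needed to isolate $z_i$ as a single geometric fixed point in the count. Combining this local analysis with the Rouch\'e/Hurwitz enumeration then yields exactly $d$ distinct fixed points in $\overline{P}$.
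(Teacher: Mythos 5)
Your proposal takes a genuinely different route from the paper. The paper applies the Lefschetz Fixed Point Theorem for weakly polynomial-like maps (citing Goldberg--Milnor), computing the Lefschetz index $\iota(g,z)=+1$ at each fixed point $z$ of $g=f|_{\overline{P}}$; you transport the problem to $\overline{\mathbb{D}}$ via Riemann maps and run a Rouch\'e--Hurwitz count on $B-\tau$. The Rouch\'e step itself is sound: $B$ is a degree-$d$ Blaschke product, $|(1-t)\tau|\leq 1-t<1=|B|$ on $\partial\mathbb{D}$, so $B-(1-t)\tau$ has exactly $d$ zeros in $\mathbb{D}$ for each $t>0$, and any limiting zero on $\partial\mathbb{D}$ must sit over a boundary fixed point of $f$. (One small slip: $|\tau|=1$ on whole arcs of $\partial\mathbb{D}$, not only at the $\phi(z_i)$'s, because $\partial P$ and $\partial f(P)$ share boundary arcs whenever the inclusion $P\subset f(P)$ is not compact; but since $\psi$ is injective the equation $B(w)=\tau(w)$ still forces $\phi^{-1}(w)\in\mathrm{Fix}(f)$, so your conclusion about where zeros can accumulate survives.)

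The genuine gap is the final step, and you have correctly diagnosed where it lives. To finish, you need that exactly one of the $d$ limiting zeros of $B-(1-t)\tau$ is absorbed by each $\phi(z_i)$ as $t\to 0^+$. Showing that $z_i$ is the \emph{unique} fixed point of $f$ in $\overline{\Delta_i}$ rules out other fixed points crowding in nearby, but it does not by itself count how many of the $t$-parameter zeros converge to $\phi(z_i)$. This is especially acute in the parabolic case, where $f-\mathrm{id}$ vanishes to order $\geq 2$ at $z_i$; one must argue that the geometry of $P$ --- which, by the design of Lemma~\ref{lem:puzzles}, sits inside a single repelling petal of $z_i$ --- cuts the local count from $\geq 2$ down to exactly $1$. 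The paper closes exactly this gap in one stroke: $f:\overline{\Delta}\to f(\overline{\Delta})$ is a degree-$1$ weakly polynomial-like map, so the Lefschetz Fixed Point Theorem gives $\iota(f|_{\overline{\Delta}},z_i)=1$, and locality of the index transfers this to $\iota(g,z_i)=1$; summing indices then yields $\deg(g)$ fixed points immediately. To complete your version, you would essentially need to redo this index computation by hand as a local winding-number estimate near $\phi(z_i)$. The ingredients are all in place, but the argument as written does not close.
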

\begin{proof}
	The proof is based on the Lefschetz Fixed Point Theorem, which states that the degree of a weakly polynomial-like map, with isolated fixed points, is equal to the sum of Lefschetz indices at fixed points; See \cite[Lemma 3.7]{GM}. 
	
	Note that the map $g=f:\ol{P}\to f(\ol{P})$ is a weakly polynomial-like map, with isolated fixed points; See \cite[Definition 3.6]{GM}. We claim that the Lefschetz index $\iota(g, z)$ of each $g$-fixed point $z$ equals $+1$. In fact, an interior fixed point in $P$ cannot be parabolic, then its Lefschetz index is $+1$.  In the case of a boundary fixed point $z\in \partial P$, then $z$ is repelling or parabolic. In both cases, there is a degree one weakly polynomial-like map $f:\ol{\Delta}\to f(\ol{\Delta})$ according to Lemma \ref{lem:puzzles}. The Lefschetz Fixed Point Theorem implies that $\iota(f|_{\ol{\Delta}}, z)=+1$. Since the Lefschetz index is local invariant, we have $\iota(g, z)=+1$ as well.
	
	Since all fixed points of $g$ have Lefschetz indices $+1$, the number of fixed point equals $\tu{deg}(g)$ by the Lefschetz Fixed Point Theorem. The proof of the lemma is complete.
\end{proof}

\subsection{Ends}
We now study the infinite intersections of nested closed puzzle pieces, which divide the whole Julia sets into dynamic invariant full continua, known as ``ends". 
\begin{defi}
	Let $Z$ be the collection of fixed points in $\Gamma$ (see \eqref{eq:gamma} for $\Gamma$) and $Z_\infty=\bigcup_{n\geq 0} f^{-n}(Z)$. For $z\in Z_\infty$, we denote
	$Y_n(z)=\bigcup\ol{P},$
	where $P$ runs over all depth-$n$ puzzle pieces whose closures contain $z$. The \emph{satellite end} of the point $z$ is defined as $$\tb{e}(z)=\bigcap_{n\geq 0} Y_n(z).$$
	If $z \in J_f\sm Z_\infty$ is not contained in any satellite end, its \emph{primitive end} is defined as $$\tb{e}(z) = \bigcap_{n \geq 0}\ol{P_n(z)},$$
	where $P_n(z)$ is the unique depth-$n$ puzzle piece containing $z$. By an \emph{end} we mean a satellite or primitive end.
\end{defi}
Remarkably, the number of depth-$n$ puzzles in $Y_n(z)$ remains constant regardless of $n$, as long as $n$ is sufficiently large. Moreover, $Y_n(z)$ is a closed Jordan disk containing $z$ in its interior, whenever $z\in Z$ is repelling; If $z\in Z$ is parabolic, then $z\in \partial Y_n(z)$ for all $n\geq0$.

From the construction of $\Gamma$, ends are \emph{full continua}, i.e., compact, connected, and having only one complementary component, all of which are contained in the Julia set. Ends also possess other basic properties as stated in the following lemma.

\begin{lem}\label{lem:ends}
	\tu{(1)} the map $f$ sends satellite and primitive ends onto satellite and primitive ends, respectively.
	
	\tu{(2)} Let $\tb{e}(z)$ be a primitive end and $n\geq 0$ be an integer. Then there is a puzzle piece $P_{n'}(z)$ such that $$P_{n'}(z)\Subset P_n(z).$$ Consequently, $\tb{e}(z)$ is disjoint from the boundary of any puzzle piece; and if $\tb{e}(z)\cap \tb{e}(z')\neq \es$ then $\tb{e}(z)=\tb{e}(z')$.
\end{lem}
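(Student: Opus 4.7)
The plan is to exploit two facts about the puzzle construction of Section~2.1: (a) $f$ maps each depth-$n$ puzzle piece properly onto a depth-$(n-1)$ puzzle piece, and (b) for any puzzle piece $P$, the set $\partial P\cap J_f$ is finite and contained in $Z_\infty$ --- this follows from the construction of $\Gamma$, since each landing point of a ray in $\ol R$ and each point of $\Gamma_U\cap\partial U$ is a fixed point of $Z$ or an iterated preimage thereof, and this property is inherited by every $f^{-n}(\Gamma)$ because $J_f$ is fully invariant.

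For part~(1), the dynamics lift and descend puzzle pieces, so $f(Y_n(z))=Y_{n-1}(f(z))$ whenever $z\in Z_\infty$ (the inclusion $\subseteq$ uses the covering property, while $\supseteq$ lifts each depth-$(n-1)$ piece whose closure contains $f(z)$ to a depth-$n$ piece whose closure contains $z$). Taking intersections and using compactness yields $f(\tb{e}(z))=\tb{e}(f(z))$ for satellite ends, and the same proper-map argument gives $f(\ol{P_n(z)})=\ol{P_{n-1}(f(z))}$ in the primitive case. It only remains to check that $\tb{e}(f(z))$ is again primitive. Because $Z_\infty$ is invariant in both directions under $f$, $z\notin Z_\infty$ forces $f(z)\notin Z_\infty$. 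If $f(z)$ belonged to some satellite end $\tb{e}(w)$, lifting each depth-$n$ piece witnessing $f(z)\in Y_n(w)$ to a depth-$(n+1)$ piece whose closure contains $z$ would produce a preimage $w_n\in f^{-1}(w)$ in that closure; pigeonholing over the finite set $f^{-1}(w)$ then selects a single $w'\in Z_\infty$ appearing as $w_n$ for infinitely many $n$, and the nesting $Y_{m+1}(w')\subset Y_m(w')$ places $z$ in every $Y_m(w')$, hence in $\tb{e}(w')$, contradicting the primitivity of $\tb{e}(z)$.

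For part~(2), assume for contradiction that $P_{n'}(z)\not\Subset P_n(z)$ for every $n'>n$. Then $\partial P_{n'}(z)\cap\partial P_n(z)$ is a nonempty subset of the finite set $Z_\infty\cap\partial P_n(z)$ by fact~(b), so by pigeonhole some $w$ in this finite set lies in $\ol{P_{n'}(z)}$ for infinitely many $n'$; the nesting of closures then forces $w\in\ol{P_{n'}(z)}$ for every $n'$, whence $P_{n'}(z)\subset Y_{n'}(w)$ and $z\in\bigcap_{n'}Y_{n'}(w)=\tb{e}(w)$, a satellite end, contradicting the primitivity of $\tb{e}(z)$. The first consequence is immediate: any puzzle piece $Q$ of depth $m$ is either $Q=P_m(z)$, in which case $\tb{e}(z)\subset\ol{P_{m'}(z)}\Subset P_m(z)$ for some $m'>m$, or else a distinct depth-$m$ piece, hence disjoint from $P_m(z)\supset\tb{e}(z)$. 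For the dichotomy, a common point $w\in\tb{e}(z)\cap\tb{e}(z')$ lies in the interior of $P_n(z)$ for every $n$. If $\tb{e}(z')$ is primitive, uniqueness of the depth-$n$ piece through $w$ forces $P_n(z)=P_n(z')$ for all $n$, so $\tb{e}(z)=\tb{e}(z')$. If $\tb{e}(z')$ were satellite with $z'\in Z_\infty$, then for each $n$ the unique depth-$n$ piece in $Y_n(z')$ whose closure contains $w$ must coincide with $P_n(z)$ (since distinct depth-$n$ pieces are disjoint and $w$ lies in the interior of $P_n(z)$); hence $z'\in\ol{P_n(z)}$ and $P_n(z)\subset Y_n(z')$ for every $n$, giving $z\in\tb{e}(z')$, which again contradicts the primitivity of $\tb{e}(z)$.

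The main obstacle, both conceptually and technically, is showing that a point with a primitive end cannot secretly lie in any satellite end. Each time this is forced by pigeonholing over a finite set --- $f^{-1}(w)$ in part~(1) and $Z_\infty\cap\partial P_n(z)$ in part~(2) --- combined with the monotone nesting of the filtrations $\{Y_m(\cdot)\}$ and $\{P_m(\cdot)\}$. Once these finiteness observations are in place, the rest is routine bookkeeping with the nested/disjoint dichotomy of puzzle pieces.
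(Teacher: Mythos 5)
Your proof is correct and takes essentially the same route as the paper: part (1) pushes the nested filtrations $Y_n$ and $\ol{P_n}$ forward under $f$, and part (2) uses that $\partial P_n(z)\cap J_f$ is a finite subset of $Z_\infty$ whose points $z$ must eventually escape at higher depths, since $z$ belongs to no satellite end. Your extra verification in part (1) --- that $f(z)$ itself lies in no satellite end, via pigeonholing over the finite set $f^{-1}(w)$ --- fills a small gap the paper leaves implicit behind the word ``similarly'', but the strategy and ingredients are otherwise identical.
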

\begin{proof}
	(1) Suppose $\tb{e}(z)$ is satellite with $z\in Z_\infty$. Then $Y_n(f(z))=f(Y_n(z))$ for $n\geq 1$. Thus $\tb{e}(f(z))=\bigcap f(Y_n(z))=f(\bigcap Y_n(z))=f(\tb{e}(z))$, where the second equality follows from the fact that $Y_{n+1}(z)\subset Y_n(z)$. Similarly, in the case of a primitive, we have $\tb{e}(f(z))=f(\tb{e}(z))$.
	
	(2) Let $\xi_1,\ldots, \xi_l$ be the points in $\partial P_n(z)\cap J_f$. Then $\xi_i\in Z_\infty$. By definition, $z\notin\tb{e}(\xi_i)$. Thus $\xi_i\notin \partial P_{n_i}(z)$ for some $n_i>n$; for otherwise, $z\in\tb{e}(\xi_i)$, a contradiction. Let $n'=\tu{max}\{n_i\}$. It follows that $\partial P_{n'}(z)\cap \partial P_n(z)\cap J_f=\es$. Hence $P_{n'}(z)\Subset P_{n}(z)$.
	
	Assume by contradiction that $\tb{e}(z)\neq \tb{e}(z')$. Then $P_n(z)\cap \tb{e}(z')=\es$ for some $n$. Thus $\tb{e}(z)\subset \ol{P_{n'}(z)}\subset P_n(z)$ for a large $n'>n$. We have $\tb{e}(z)\cap \tb{e}(z')=\es$, a contradiction.
\end{proof}

We observe that it is possible for two satellite ends to intersect, resulting in a non-trivial continuum.
For example, consider the tuning of the Douady rabbit polynomial using the ``dendrite'' map $g(z)=z^2+\tb{i}$. The resulting map can be denoted by $f(z)=z^2+c_0$. Let $\xi$ be the $\alpha$-fixed point of $f$. Then both satellite ends $\tb{e}(\xi)$ and $\tb{e}(-\xi)$ are formed by three copies of $J_g$. The intersection $\tb{e}(\xi)\cap\tb{e}(-\xi)$ is exactly the copy of $J_g$ containing zero.   

\subsection{Triviality of satellite ends}
Most satellite ends are trivial under our assumption that $f$ admits no irrationally neutral cycles.

\begin{prop}\label{prop:key}
	Let $z\in Z$ with each external ray landing at $z$ being fixed. Then $\tb{e}(z)=\{z\}$.
\end{prop}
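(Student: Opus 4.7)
My plan is to show $\tb{e}(z) = \{z\}$ by decomposing a neighborhood of $z$ into $f$-invariant sectors and showing that the nested puzzle pieces in each sector shrink to $\{z\}$.

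\textbf{Step 1 (sector decomposition).} Let $\mathcal{N}$ be a small neighborhood of $z$. The external rays landing at $z$, together with arcs of the graphs $\Gamma_U$ and portions of $\partial U$ for fixed bounded Fatou components $U$ with $z \in \partial U$, partition $\mathcal{N}$ into finitely many sectors, each fixed by $f$ (by the hypothesis on rays and Assumptions (A1)--(A2)). For each sector $\sigma$ meeting $J_f$, let $Q_n(\sigma)$ be the unique depth-$n$ puzzle piece in $Y_n(z)$ meeting $\sigma$; then $Y_n(z) = \bigcup_\sigma \overline{Q_n(\sigma)}$ together with finitely many pieces contained in fixed bounded Fatou components, whose diameters trivially tend to $0$. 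It thus suffices to show $\mathrm{diam}(Q_n(\sigma)) \to 0$ for every such $\sigma$.

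\textbf{Step 2 (conformal inverse branches).} Fix $\sigma$ and write $Q_n = Q_n(\sigma)$. The invariance of $\sigma$ implies that $f(Q_{n+1}) = Q_n$ is a proper map with $Q_{n+1} \subset Q_n$ and $z \in \partial Q_{n+1} \cap \partial Q_n$. Since fixed points of $f$ are isolated, I may choose $N$ so that $z$ is the only fixed point in $\overline{Q_N}$ (hence in every $\overline{Q_n}$, $n \geq N$). Applying Lemma \ref{lem:fixed-points} to $Q_{n+1}$, which satisfies $Q_{n+1} \subset f(Q_{n+1}) = Q_n$, yields $\deg(f|_{Q_{n+1}}) = 1$; therefore $f|_{Q_{n+1}}: Q_{n+1} \to Q_n$ is a conformal isomorphism, and by composition so is $f^{n-N}|_{Q_n}: Q_n \to Q_N$ for every $n \geq N$.

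\textbf{Step 3 (shrinking).} Let $g = (f|_{Q_{N+1}})^{-1}: Q_N \to Q_{N+1} \subset Q_N$ be the conformal inverse branch. It is a holomorphic self-map of $Q_N$ with unique (boundary) fixed point $z$, and satisfies $g^n(Q_N) = Q_{N+n}$. In the repelling case, in a linearizing coordinate $\psi$ on $\mathcal{N}$ with $\psi \circ f = \lambda \psi$, $|\lambda| > 1$, the map $g$ becomes multiplication by $\lambda^{-1}$ on $\mathcal{N} \cap Q_N$, a strict linear contraction toward $z$. In the parabolic case, Assumption (A1) forces $f'(z) = 1$, the sector $\sigma$ lies in a repelling petal direction, and in a suitable Fatou coordinate $\zeta$ the map $g$ becomes $\zeta \mapsto \zeta + 1$ on a half-plane. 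In both cases, the local normal form contracts $\mathcal{N} \cap Q_N$ to $\{z\}$ under iteration. Combined with the Denjoy--Wolff theorem applied to $g: Q_N \to Q_N$ (whose unique boundary fixed point at $z$ is attracting in the repelling case and parabolic in the parabolic case), which forces $g^n \to z$ locally uniformly on $Q_N$ and hence $Q_{N+n} \subset \mathcal{N}$ for $n$ sufficiently large, it follows that $\mathrm{diam}(Q_{N+n}) \to 0$.

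The principal obstacle is Step 2: because $z$ is a \emph{boundary} fixed point of $Q_{n+1}$, a direct fixed-point count is subtle, and it is precisely the Lefschetz machinery set up in Lemmas \ref{lem:puzzles} and \ref{lem:fixed-points} that makes it work. Once the inverse branches are known to be conformal, the contraction in Step 3 follows from classical local dynamics around a repelling or parabolic fixed point together with the Denjoy--Wolff theorem.
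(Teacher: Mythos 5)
Your Step~2 contains a genuine gap, and it is located exactly where the real difficulty of the proposition lies. You write that ``since fixed points of $f$ are isolated, I may choose $N$ so that $z$ is the only fixed point in $\overline{Q_N}$.'' This is circular: the puzzle pieces $Q_n$ decrease to the compact set $K=\bigcap_n\overline{Q_n}$, and a fixed point $\xi\neq z$ lies in $\overline{Q_n}$ for all $n$ precisely when $\xi\in K$. The finiteness and isolation of the fixed-point set tells you nothing about whether $K$ contains other fixed points. Worse, Lemma~\ref{lem:fixed-points} applied to $Q_{n+1}\subset f(Q_{n+1})=Q_n$ already shows that $\overline{Q_{n+1}}$ contains exactly $\deg(f|_{Q_{n+1}})$ fixed points; if the stabilized degree $\delta$ were $>1$ (which is exactly the scenario one must exclude, and \emph{a priori} occurs for satellite ends), then $K$ would contain $\delta>1$ fixed points and no choice of $N$ could ever isolate $z$. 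So your deduction $\deg(f|_{Q_{n+1}})=1$ is obtained by assuming, in a disguised form, the very statement to be proved.

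The paper's proof confronts this head-on. It shows that $K$ contains exactly $\delta$ fixed points, each repelling or parabolic with all its external rays fixed, uniformizes $\mathbb{C}\setminus K$ by $h:\mathbb{C}\setminus K\to\mathbb{C}\setminus\overline{\mathbb{D}}$, and uses Schwarz reflection to promote the return map to a degree-$\delta$ orientation-preserving covering $G:\partial\mathbb{D}\to\partial\mathbb{D}$ with at least $\delta$ fixed points. It then proves these are all repelling (because $K$ is the non-escaping set of $f$ in $\overline{P_{N+1}}$), so $G$ is conjugate to $x\mapsto x^\delta$ and therefore has exactly $\delta-1$ fixed points, giving $\delta\geq\delta-1+1$, i.e.\ a contradiction unless $K$ is a point. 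Your sector decomposition in Step~1 and the Denjoy--Wolff contraction in Step~3 are reasonable \emph{once} the restriction is known to be conformal, but the fixed-point count is the crux and cannot be waved through; you need an argument of the paper's type (or some substitute) to show $\delta=1$.

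Secondarily, in Step~3 note that $Q_{N+1}$ is not compactly contained in $Q_N$ (they share boundary along the fixed rays at $z$), so Denjoy--Wolff alone gives only locally uniform convergence $g^n\to z$ on compacts of $Q_N$ and does not immediately bound $\mathrm{diam}(\overline{Q_{N+n}})$; one needs to splice together the local model near $z$ with the Denjoy--Wolff contraction away from $z$, as you gesture at, but this step should be made explicit.
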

\begin{proof}
	Fix a puzzle piece $P_0$ of depth-0 in $Y_0(z)$. It determines a sequence of depth-$n$ puzzle pieces $P_n\subset Y_n(z)$ such that $P_{n+1}\subset P_{n}\subset\cdots\subset P_0$. By condition, we have $f(P_{n+1})=P_n$. Let $K=\bigcap \ol{P_n}$. For the proof, it suffices to show that $K=\{z\}$. 
	
	We assume by contradiction that $K$ contains more than one point. Note that the degree of $f: P_{n+1}\to P_{n}$ eventually stabilizes at a number, say $\delta$, for all $n\geq N$. By Lemma \ref{lem:fixed-points}, $f$ has exactly $\delta$ distinct fixed points in $\ol{P_N}$. Therefore, $K$ contains exactly $\delta$ fixed points. From the choice of $\Gamma$, none of these fixed points is attracting. Thus, they are repelling or parabolic by the assumption that $f$ admits no irrationally neutral cycles.
	
	Let $\xi$ be a fixed point in $K$. Claim that $f$ fixes each external ray that lands at $\xi$. Indeed, it holds clearly if $\xi$ receives just one external ray. In other cases, $\xi\in \partial P_0$. Let $\Delta\subset P_0$ be the disk in Lemma \ref{lem:puzzles} based at $\xi$. Then $\partial\Delta\cap \partial P_0$ contains a segment $\alpha$ from an external ray $R$  that lands at $\xi$. Since $f(P_1)=P_0$, we have $\alpha\subset f(\alpha)$ and so $f(R)\subset R$. The claim is proved.
	
 	Consider a conformal map $h:\mathbb{C}\sm K\to \mathbb{C}\sm \olD.$ According to \cite[Corollary 17.10]{M1}, $\gamma=\ol{h(\partial P_N\sm K)}$ is a Jordan curve outside of $\mb{D}$. Furthermore, the intersection $$X=\gamma\cap \partial \mb{D}=\gamma\sm {h(\partial P_N\sm K)}$$ is a finite set.

	Let $g: P'_{N+1}\to P'_N$ defined by $g(w)=h\circ f\circ h^{-1}(w)$, where $P_N'=h(P_N\sm K)$. Using the Schwartz reflexion principle, the map $g$ can be extended to $\partial \mb{D}\sm X$ and, by continuity, to an orientation preserving selfcovering $G:\partial \mb{D}\to\partial \mb{D}$. By counting preimages of a point where $h^{-1}$ has radial limit, we have $\tu{deg}(G)=\delta$. Note that the image of a fixed ray landing at $K$ under $h$ is $g$-fixed and it then lands at a $G$-fixed point. Thus $G$ has at least $\delta$ fixed points by the above claim. 
	
	We now claim that fixed points of $G$ are repelling. For otherwise, suppose $w\in\partial\mb{D}$ is a one- or two-side attracting $G$-fixed point. In both cases, an attracting domain of $w$ associate to $g$ must intersect $\partial\mb{D}$. Then, in the domain we take a point $w'$ close to $\partial\mb{D}$ such that $g^{n}(w')\in P_{N+1}'$ for all $n\geq 0$. Let $\wt{w}=h^{-1}(w')$. It follows that $\wt{w}\notin K$ and $f^{n}(\wt{w})\subset P_{N+1}$ for all $n\geq 0$. This contradicts to the fact that $K$ is exactly the non-escaping set
	$$\{\xi\in \ol{P_{N+1}}: f^n(\xi)\in \ol{P_{N+1}}\tu{ for all }n\geq 0\}.$$
	The claim is proved.
	
It is well known that a degree-$\delta$ self-covering expansion on the circle is topologically conjugate to the map $x\mapsto x^\delta$ on $\partial \mb{D}$ \cite{Sh}. Therefore, $G$ has $\delta-1$ fixed points. This contradicts the previous count of $\delta$ fixed points. Thus $K$ is a singleton. The proof is complete.
\end{proof} 
\begin{cor}\label{lem:nested1}
	Let $z\in Z$ with each external ray landing at $z$ being fixed. Then for any $n\geq 0$, there is an integer $n'>n$ such that
	
	\tu{(1)} if $z$ is parabolic, $\partial Y_{n'}(z)\cap \partial Y_{n}(z)=\{z\}$; Otherwise,
	
	\tu{(2)} $\partial Y_{n'}(z)\cap \partial Y_n(z)=\es$.
\end{cor}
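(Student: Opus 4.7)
The backbone is Proposition~\ref{prop:key}, which gives $\bigcap_n Y_n(z)=\{z\}$. Since $(Y_n(z))$ is a decreasing sequence of nonempty compacta with singleton intersection, a standard argument yields $\operatorname{diam} Y_n(z)\to 0$ in the Hausdorff metric, so $Y_n(z)$ is eventually contained in any prescribed Euclidean neighborhood of $z$. In the repelling case I will observe that each fixed external ray landing at $z$ separates two puzzle pieces both having $z$ in their closure, so such a ray lies in the interior of $Y_n(z)$; hence $z$ is an interior point of $Y_n(z)$. Picking $\rho>0$ with $\overline{B(z,\rho)}\subset Y_n(z)$, Hausdorff shrinking gives $Y_{n'}(z)\subset B(z,\rho)\subset\operatorname{int} Y_n(z)$ for $n'$ large, so $\partial Y_{n'}(z)\cap\partial Y_n(z)=\emptyset$.

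The parabolic case requires more care since $z\in\partial Y_n(z)$. The set $\partial Y_n(z)\cap J_f\subset Z_\infty$ is finite; write it as $\{z,\xi_1,\ldots,\xi_l\}$. For each $\xi_i$ I argue $z\notin\tb{e}(\xi_i)$: if $z\in\bigcap_m Y_m(\xi_i)$, K\"onig's lemma applied to the finitely branching tree of depth-$m$ puzzle pieces containing both $z$ and $\xi_i$ in their closure produces a nested chain $P_1\supset P_2\supset\cdots$ with $z,\xi_i\in\overline{P_m}$, whence $\xi_i\in\bigcap_m\overline{P_m}\subset\bigcap_m Y_m(z)=\{z\}$, a contradiction. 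So some $m_i$ satisfies $z\notin Y_{m_i}(\xi_i)$. I then pick $n'>n$ exceeding every $m_i$ and, by Hausdorff shrinking, with $Y_{n'}(z)$ avoiding each $\xi_i$.

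To rule out $x\in(\partial Y_{n'}(z)\cap\partial Y_n(z))\setminus\{z\}$ I proceed by cases. If $x\in J_f$, then $x\in\partial Y_n(z)\cap J_f$, so $x=\xi_i$ for some $i$; but $\xi_i\notin Y_{n'}(z)\supset\partial Y_{n'}(z)$, a contradiction. If $x\notin J_f$, then $x$ lies on a common arc of $f^{-n}(\Gamma)\cap f^{-n'}(\Gamma)$---either an external ray $R$ with landing point $\eta\in J_f$, or an arc inside a Fatou component $U$ with $z\in\partial U$. In the ray case $\eta\in\partial Y_n(z)\cap J_f$; if $\eta=\xi_i$, no depth-$n'$ piece has both $\xi_i$ and $z$ in its closure (by the choice of $n'$), so the two pieces flanking the depth-$n'$ edge of $R$ both lie outside $Y_{n'}(z)$, contradicting $x\in\partial Y_{n'}(z)$; if $\eta=z$, both pieces flanking $R$ contain $z$ in closure, so $R$ is interior to $Y_n(z)$, contradicting $x\in\partial Y_n(z)$.

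In the Fatou sub-case, the relevant arcs at depths $n$ and $n'$ are boundaries of strictly nested preimages under $(f|_U)^n$ and $(f|_U)^{n'}$ of the petal $D$ used in defining $\Gamma_U$. The local parabolic geometry of $f|_U$ at $z$ confines the intersection of these two Jordan curves to the common pre-parabolic points on $\partial U\subset J_f$, contradicting $x\notin J_f$. The main obstacle is precisely this Fatou sub-case: making rigorous that consecutive preimage petal boundaries meet only on $\partial U$ relies on the nested petal structure in $U$ together with the precise construction of $\Gamma_U$ in Section~\ref{subsection}.
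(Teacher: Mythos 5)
Your overall strategy is the same as the paper's: use Proposition~\ref{prop:key} to obtain $\bigcap_n Y_n(z)=\{z\}$, deduce Hausdorff shrinking, and choose $n'$ large enough that $Y_{n'}(z)$ misses the finitely many Julia points of $\partial Y_n(z)\setminus\{z\}$. Your repelling-case argument via ball containment is clean and complete, and is in fact a tidier justification than the paper's one-line assertion.

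For the parabolic case, two remarks. First, the K\"onig's lemma excursion is unnecessary: for any $m$, $z\in Y_m(\xi_i)$ holds if and only if some depth-$m$ piece contains both $z$ and $\xi_i$ in its closure, i.e.\ if and only if $\xi_i\in Y_m(z)$. Hence $z\in\tb{e}(\xi_i)$ iff $\xi_i\in\tb{e}(z)=\{z\}$, which fails simply because $\xi_i\neq z$. Moreover, by this same symmetry the two constraints you place on $n'$ (exceeding $\max m_i$, and $Y_{n'}(z)$ avoiding each $\xi_i$) are literally the same condition, so one of them is redundant.

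Second, the Fatou sub-case you flag as the ``main obstacle'' is where the real (and only implicit) content of the paper's ``Therefore'' lies, but it can be closed. For $n'$ sufficiently large, $Y_{n'}(z)$ lies so close to $z$ that it meets only bounded Fatou components $U$ with $z\in\partial U$. Since $z$ is a fixed parabolic point, such a $U$ is necessarily an immediate parabolic basin at $z$, fixed by~(A1). There, $\Gamma_U=\partial D_N$ with $D$ an attracting petal, so $f(\ol D)\subset D$ gives $\ol D\cap U\subset D_1$ and, pulling back by $(f|_U)^{-(N+k)}$, $\ol{D_{N+k}}\cap U\subset D_{N+k+1}$ for all $k$. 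Consequently $\partial D_{N+n}$ and $\partial D_{N+n'}$ are disjoint inside $U$ and can only meet on $\partial U\subset J_f$. Since $\partial Y_n(z)\cap U\subset\partial D_{N+n}$ (the pieces in $Y_n(z)$ touch $J_f$ and hence lie exterior to $D_{N+n}$ in $U$) and similarly $\partial Y_{n'}(z)\cap U\subset\partial D_{N+n'}$, a shared point $x\in U\setminus J_f$ is impossible. With this filled in, your argument is complete and essentially coincides with the paper's, which leaves the entire non-Julia-point analysis to the reader.
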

\begin{proof}
	(1) Note  that $z\in\partial Y_k(z)$ for any $k\geq0$ in this case. Since $\tb{e}(z)=\{z\}$ by Proposition \ref{prop:key}, there is an integer $n'>n$ such that $\partial Y_{n'}(z)$ is disjoint from the finite set $\partial Y_{n}(z)\cap J_f\sm\{z\}$. Therefore, $\partial Y_{n'}(z)\cap\partial Y_n(z)=\{z\}$.
	
	(2) In this case, $Y_n(z)$ contains $z$ in its interior. Since $\tb{e}(z)=\{z\}$ by Proposition \ref{prop:key}, there is an integer $n'>n$ such that $Y_{n'}$ is disjoint from the finitely many points in $\partial Y_n(z)\cap J_f$. Therefore, $Y_{n'}(z)$ belongs to the interior of $Y_n(z)$. 
\end{proof}
\begin{cor}\label{cor:disjoint}
	Let $\tb{e}$ be a non-trivial end. Then $\tb{e}$ is disjoint from the boundaries of bounded Fatou components.
\end{cor}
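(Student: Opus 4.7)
The plan is to suppose for contradiction that $\tb e$ is a non-trivial end meeting $\partial U$ at a point $\zeta$, where $U$ is a bounded Fatou component, and to force $\tb e$ down to $\{\zeta\}$. By Lemma~\ref{lem:ends}(1) the forward images $f^m(\tb e)$ are ends of the same type, and since a polynomial has finite fibres it cannot collapse a non-trivial continuum; iterating and using (A1), I may assume $U$ is fixed. The crux is then the claim $\tb e\cap\partial U=\{\zeta\}$.

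For that singleton claim, $\partial U$ is a Jordan curve by Theorem~\ref{thm:RY}, and $f|_{\partial U}$ is conjugate, via the Riemann uniformization of $U$, to the boundary action of a finite Blaschke product on $\partial\mb D$. In both the attracting and parabolic subcase the $(f|_{\partial U})^{-n}$-preimages of the finite set $\Gamma_U\cap\partial U$ become dense in $\partial U$, so the arcs of $\partial U$ trapped in each depth-$n$ puzzle piece shrink in length. Combined with the Roesch--Yin decomposition $K_f=\ol U\cup\bigsqcup_t L_t$, in which distinct limbs are disjoint and each meets $\ol U$ only at its root $\gamma(t)$, the connectedness of $\tb e\subset J_f\sm U$ forbids it from connecting two distinct points of $\partial U$. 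Hence $\tb e\cap\partial U=\{\zeta\}$, and $\tb e$ sits in the closed limb $L_{\zeta}$ rooted at $\zeta$. If $L_{\zeta}=\{\zeta\}$ then $\tb e=\{\zeta\}$, contradicting non-triviality. Otherwise Theorem~\ref{thm:RY}(3) makes $\zeta$ biaccessible; the classical landing theorem then makes $\zeta$ a periodic root, and (A2) forces $\zeta\in Z$. Replacing $f$ by an iterate $f^N$ does not alter the ends (the $f^N$-puzzle piece of depth $n$ is just the $f$-puzzle piece of depth $Nn$, a cofinal subsequence in the nested intersection), so I may further assume every external ray landing at $\zeta$ is fixed; Proposition~\ref{prop:key} then yields $\tb e(\zeta)=\{\zeta\}$.

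The finishing move distinguishes the two possible types for $\tb e$. If $\tb e$ is primitive, Lemma~\ref{lem:ends}(2) forces $\tb e$ to avoid every puzzle-piece boundary, contradicting $\zeta\in\Gamma\subset\partial P_0$. If $\tb e=\tb e(w)$ is satellite with $w\in Z_\infty$, then $\zeta\in Y_n(w)$ for every $n$, i.e.\ for every $n$ some depth-$n$ puzzle piece $P$ has $\{w,\zeta\}\subset\ol P$; by symmetry this places $w\in Y_n(\zeta)$ for every $n$, so $w\in\tb e(\zeta)=\{\zeta\}$ and hence $\tb e=\{\zeta\}$, again a contradiction. The hardest point of this scheme is the singleton claim $\tb e\cap\partial U=\{\zeta\}$: pushing the density of $(f|_{\partial U})^{-n}$-preimages of $\Gamma_U\cap\partial U$ through the parabolic case, where the boundary dynamics fails to be uniformly expanding, and then turning this arc-length shrinking into a clean topological separation that rules out $\tb e$ meeting $\partial U$ at two distinct points.
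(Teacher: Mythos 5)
You follow the paper's overall scheme --- locate $\tb{e}\cap\partial U$ as a single root $\zeta$, show $\zeta$ lands in the fixed-point set, invoke Proposition~\ref{prop:key}, and conclude via the primitive/satellite dichotomy --- but there is a genuine gap in the middle. The deduction ``Theorem~\ref{thm:RY}(3) makes $\zeta$ biaccessible; the classical landing theorem then makes $\zeta$ a periodic root'' is a non sequitur: biaccessibility does not imply periodicity (there are typically uncountably many biaccessible non-periodic points), and the landing theorem only says that periodic rays land at repelling or parabolic periodic points, not that a biaccessible point is periodic. What is actually true, and what the paper uses, is only that $\zeta\in Z_\infty$, not $\zeta\in Z$: since $\tb{e}$ is non-trivial it sits in a non-trivial limb $L_\zeta$, and by Theorem~\ref{thm:RY}(4) every non-trivial limb in the forward orbit is eventually mapped bijectively onto one of the finitely many critical limbs; pigeonhole then forces the orbit to be eventually periodic, so $\zeta$ is \emph{pre}periodic to a periodic (hence by (A2) fixed) root, i.e.\ $\zeta\in Z_\infty$ but possibly $\zeta\notin Z$.

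This matters downstream. Your next move, ``replacing $f$ by an iterate $f^N$ \dots\ so I may further assume every external ray landing at $\zeta$ is fixed,'' fails for a strictly preperiodic $\zeta$: no iterate of $f$ fixes a strictly preperiodic point, so you cannot bring $\zeta$ into the hypotheses of Proposition~\ref{prop:key} this way. The repair is the one the paper makes implicitly: choose $m$ with $f^m(\zeta)\in Z$, apply Proposition~\ref{prop:key} to get $\tb{e}(f^m(\zeta))=\{f^m(\zeta)\}$, and then use Lemma~\ref{lem:ends}(1) together with the finiteness of fibres of $f^m$ to conclude the connected set $\tb{e}(\zeta)$ is a singleton (equivalently, push $\tb{e}$ forward by $f^m$ first, as you already do when fixing $U$). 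The rest of your argument --- the expanded density/arc-shrinking justification of $\tb{e}\cap\partial U=\{\zeta\}$, which the paper compresses into a one-line appeal to Theorem~\ref{thm:RY} together with the puzzle construction, and the final primitive/satellite case split --- matches the paper in substance; the periodicity step is the one place that needs repair.
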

\begin{proof}
	We assume by contradiction that $\tb{e}\cap\partial\Omega\neq\es$ with $\Omega$ a bounded Fatou component. By some iterate of $f$, we may assume further that $\Omega$ is fixed. Then $\tb{e}$ is contained in a limb of $\Omega$ and $\tb{e}\cap\partial \Omega=\{\xi\}$ is the root of the limb by Theorem \ref{thm:RY}. The Assumption (A2) on $f$ implies that $\xi\in Z_\infty$. By Proposition \ref{prop:key}, $\tb{e}(\xi)=\{\xi\}$. It follows from $\xi\in\tb{e}$ that $\tb{e}=\tb{e}(\xi)$ is a singleton, a contradiction.
\end{proof}

\subsection{Around parabolic satellite ends}
A parabolic point belongs to the boundary of a fixed Fatou component, and receives only fixed external rays, under Assumptions (A1) and (A2), 
\begin{lem}\label{lem:parabolic}
	Let $z\in Z$ be a parabolic point. Let $P_0$ be a depth-0 puzzle piece such that $z\in\partial P_0$. Then there is a pair $(U, V)$ of Jordan disks, with $U\Subset V\subset P_0$, and a positive integer $N_0>0$ with the following properties.
	
		Any primitive end $\tb{\emph{w}} \subset P_{N_0}$ will be mapped into $U$ under some iterate $f^k$, moreover the map
	$$f^k: (U^k, V^k)\to (U, V)$$
	is conformal, where $U^k\Subset V^{k}\subset P_0$ are components of $f^{-k}(U)$ and $f^{-k}(V)$ containing $\tb{\emph{w}}$, respectively. 
\end{lem}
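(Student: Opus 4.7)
The plan is to introduce Fatou coordinates at $z$ to conjugate the local dynamics on a repelling petal to a translation $\zeta\mapsto\zeta-1$, and to define $V$ and $U$ as pullbacks of explicit strips. After replacing $f$ by an iterate if necessary to fix each petal at $z$, choose a repelling petal $\mathcal{P}$ at $z$ in the sector of $P_0$, small enough that $\mathcal{P}\subset P_0$, and a Fatou coordinate $\phi\colon\mathcal{P}\to H=\{\mathrm{Re}(\zeta)>R_0\}$ with $\phi\circ f=\phi-1$ on $\mathcal{P}\cap f^{-1}(\mathcal{P})$. By Proposition~\ref{prop:key}, $\bigcap\overline{P_n}=\{z\}$ for the nested puzzle pieces $P_n$ with $z\in\partial P_n$ and $f(P_n)=P_{n-1}$; hence for some $n_1$ one has $P_n\subset\mathcal{P}$ whenever $n\geq n_1$, and the conjugacy gives $\phi(P_n)=\phi(P_{n_1})+(n-n_1)$.

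Fix constants $a>R_0+1$, $M>0$, and $h>0$, with $M$ exceeding a uniform upper bound on the real extent of $\phi(\mathbf{w})$ over primitive ends $\mathbf{w}\subset P_{N_0}$ and $h$ exceeding their imaginary extent (see the obstacle below), and set
\[
V=\phi^{-1}\bigl(\{a<\mathrm{Re}(\zeta)<a+M,\ |\mathrm{Im}(\zeta)|<h\}\bigr),\quad U=\phi^{-1}\bigl(\{a+1<\mathrm{Re}(\zeta)<a+M-1,\ |\mathrm{Im}(\zeta)|<h-1\}\bigr).
\]
Both are Jordan disks with $U\Subset V\subset\mathcal{P}\subset P_0$. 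Choose $N_0$ large enough that $\phi(P_{N_0})$ lies strictly to the right of $\{\mathrm{Re}=a+M\}$. For any primitive end $\mathbf{w}\subset P_{N_0}$, Lemma~\ref{lem:ends}(2) together with Proposition~\ref{prop:key} gives $z\notin\mathbf{w}$, so $\phi(\mathbf{w})$ is a compact subset of $\phi(P_{N_0})$. By the choice of $M$ and $h$ there is a positive integer $k$ with $\phi(\mathbf{w})-k\subset\phi(U)$, that is, $f^k(\mathbf{w})\subset U$. The component $V^k$ of $f^{-k}(V)$ containing $\mathbf{w}$ has $\phi$-image $\phi(V)+k\subset H$, so $V^k\subset\mathcal{P}\subset P_0$; the map $f^k\colon V^k\to V$ is the translation $\zeta\mapsto\zeta-k$ in $\phi$-coordinates, hence conformal, and $U^k\Subset V^k$ follows from $\phi(U)+k\Subset\phi(V)+k$.

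The main obstacle I anticipate is establishing that the real width of $\phi(\mathbf{w})$ admits a uniform bound over all primitive ends $\mathbf{w}\subset P_{N_0}$. Since $P_{n_1}$ touches $z$, the image $\phi(P_{n_1})$ is a priori unbounded in the real direction, and a primitive end $\mathbf{w}$ approaching $z$ could have large real extent in Fatou coordinates. The required bound must come from dynamical considerations: the iterates $f^j(\mathbf{w})$ remain primitive ends by Lemma~\ref{lem:ends}(1) and, unless $\mathbf{w}$ is periodic, are pairwise disjoint by Lemma~\ref{lem:ends}(2); their Fatou images $\phi(\mathbf{w})-j$ are disjoint translates inside $H$ until $\mathbf{w}$ exits $\mathcal{P}$, and a packing argument combined with the translation invariance of the petal geometry should bound the real width of $\phi(\mathbf{w})$. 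A secondary subtlety is the case of parabolic multiplier $e^{2\pi ip/q}$ with $q>1$: Fatou coordinates are naturally defined for $f^q$ on a single petal cycle, so the integer $k$ in the statement must be chosen as a multiple of $q$, and the puzzle pieces $P_{qn}$ used in place of $P_n$.
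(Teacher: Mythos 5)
The gap you flag is real, and it is precisely where the paper's argument does its work. Your packing heuristic cannot close it as sketched: a primitive end $\mathbf{w}$ might be periodic (then there is only one translate $\phi(\mathbf{w})$ to pack), or its forward translates may leave the petal after few steps, so disjointness of $\{\phi(\mathbf{w})-j\}$ inside $H$ gives no bound on the real extent of $\phi(\mathbf{w})$. What is needed is a single compact subset of $\mathcal{P}$, bounded away from $z$, through which every primitive end must pass under iteration; that is exactly what the paper builds.

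The paper's route is purely topological and does not invoke Fatou coordinates. By Lemma~\ref{lem:fixed-points}, for large $N$ the closure $\overline{P_N}$ contains only the fixed point $z$, so $f\colon P_{N+1}\to P_N$ is conformal. By Corollary~\ref{lem:nested1}\,(1) (which is the key step you do not use) one picks $N_0>N$ with $\partial Y_N(z)\cap\partial Y_{N_0}(z)=\{z\}$; since $P_N$ and $P_{N_0}$ share the same two fixed-ray boundary arcs near $z$, the quadrilateral $Q=P_N\setminus\overline{P_{N_0}}$ is bounded away from $z$, so $Q\cap J_f$ is compact and fits into a pair of Jordan disks $U\Subset V\subset P_0$. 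Setting $\widehat g=f^{N_0-N}\colon P_{N_0}\to P_N$ (conformal), every primitive end $\mathbf{w}\subset P_{N_0}$ is disjoint from puzzle boundaries and hence has a first time $k$ with $\widehat g^{\,k}(\mathbf{w})\subset Q\cap J_f\subset U$; pulling $(U,V)$ back under the univalent $\widehat g^{\,k}$ finishes the proof. If you want to keep your Fatou-coordinate framing, the uniform bound on $\phi(\mathbf{w})$ should be derived from this same fact — that $\phi(Q)$ is a bounded set and every $\phi(\mathbf{w})$, up to an integer translation, lies inside it — rather than from a packing argument; but at that point the Fatou coordinates are no longer doing any real work.
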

\begin{proof}
	The puzzle piece $P_0$ determines a sequence of depth-$n$ puzzle pieces $P_n$ such that $P_{n+1}\subset P_n$ and $z\in\partial P_n$. Clearly $f(P_{n+1})=P_n$. By Proposition \ref{prop:key}, $\bigcap_n\ol{P_n}=\{z\}$. Thus we may take a large $N$ such that $\ol{P_{N}}$ contains exactly one fixed point $z$. Then $f:P_{N+1}\to P_{N}$ is conformal by Lemma \ref{lem:fixed-points}.
	
	By Corollary \ref{lem:nested1}, we take a large $N_0$ such that $\partial Y_N(z)\cap\partial Y_{N_0}(z)=\{z\}$. Then we obtain a topological quadrilateral $Q=P_{N}\sm \ol{P_{N_0}}$ with its two opposite sides lying in the Fatou set. Thus there are two Jordan disks $U\Subset V$ in $P_0$ such that $Q\cap J_f\subset U$.
	
	Let $\wh{g}=f^{N_0-N}:P_{N_0}\to P_{N}$. Any primitive end $\tb{{w}}\subset P_{N_0}$ is disjoint from boundaries of puzzle pieces, and thus will be eventually iterated into $Q$. Since ends are contained in the Julia set, there is a minimal integer $k$ such that $\wh{g}^k(\tb{{w}})\subset U$. Since $\wh{g}$ is conformal, the two Jordan disks $(U^k, V^k)=\wh{g}^{-k}(U, V)$ is as required. The proof is complete.
\end{proof}

\subsection{Thickened puzzle pieces}
Consider a repelling fixed point $z\in Z$. Let $p\geq 1$ be the period of the external rays landing at $z$. Fix a depth-0 puzzle piece $P_0\subset Y_0(z)$. It determines a sequence of nested depth-$n$ puzzle pieces $P_{n}$ with $P_{n}\subset Y_{n}(z)\cap P_0$. Let $P_n^*=P_{np}$. Then $f^p(P^*_{n+1})=P^*_n$.
	
Following the ``thickening" procedure as described in \cite[Figure 12]{Mlc}, we can generate a sequence of thickened puzzle pieces $\widehat{P}_n$ that contain $P^*_n$. However, in our specific case, it is necessary to ensure that the boundary $\partial \widehat{P}_n$ intersects only finitely many points in the Julia set. To achieve this, we need the following Lemma.
\begin{lem}\label{lem:finitely}
	Any neighborhood $\mc{N}$ of $z$ contains a Jordan curve $\gamma$, surrounding $z$, such that $\gamma\cap J_f$ consists of finitely many preperioidc points.
\end{lem}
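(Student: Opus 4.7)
The plan is to take $\gamma:=\partial Y_n(z)$ for a sufficiently large depth $n$. The paragraph preceding Lemma \ref{lem:ends} already records that, because $z$ is repelling, $Y_n(z)$ is a closed Jordan disk containing $z$ in its interior; hence $\partial Y_n(z)$ is automatically a Jordan curve surrounding $z$. To place this curve inside a prescribed neighbourhood $\mc{N}$, I would invoke Proposition \ref{prop:key}, which gives $\bigcap_n Y_n(z)=\tb{e}(z)=\{z\}$. Since the $Y_n(z)$ are nested decreasing compacta, this intersection condition forces Hausdorff convergence $Y_n(z)\to\{z\}$; consequently $Y_n(z)\subset\mc{N}$, and in particular $\gamma\subset\mc{N}$, once $n$ is large enough.

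The remaining task is to show that $\gamma\cap J_f$ is a finite set of preperiodic points. By construction $\gamma\subset f^{-n}(\Gamma)$, and recalling \eqref{eq:gamma}, $\Gamma$ is a finite union of arcs of three kinds: pieces of the equipotential $E$; pieces of the Fatou graphs $\Gamma_U$ for the finitely many fixed bounded Fatou components; and closures of the finitely many external rays landing on $\bigcup\Gamma_U$ or at biaccessible fixed points. Hence $f^{-n}(\Gamma)$, and \emph{a fortiori} $\gamma$, is a finite union of arcs of these same three kinds. I would then inspect each kind separately: components of $f^{-n}(E)$ lie in $B_\infty$ and miss $J_f$; components of $f^{-n}(\Gamma_U)$ live inside preimage Fatou components and meet $J_f$ only at the preimages of the finitely many landing points of $\Gamma_U$ on $\partial U$, all of which are pre-parabolic or otherwise preperiodic; and components of $f^{-n}(\ol R)$ are closed external rays, meeting $J_f$ only at landing points that are preimages of biaccessible fixed points or of $\bigcup\Gamma_U\cap J_f$, hence again preperiodic. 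Taking the union over the finitely many constituent arcs of $\gamma$ finishes the claim.

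I do not foresee any serious obstacle. The substantive ingredients — that $Y_n(z)$ is a Jordan disk with $z$ interior, and that $\tb{e}(z)=\{z\}$ — are already in hand from the earlier part of the section. The only mildly delicate step is the Hausdorff convergence $Y_n(z)\to\{z\}$, but this follows at once from the nestedness of the $Y_n(z)$ together with compactness of $\ol{Y_0(z)}$ (one could also cite Corollary \ref{lem:nested1}(2) to obtain genuine strict nesting along a subsequence). The remainder is a purely structural inspection of which arcs of $f^{-n}(\Gamma)$ can possibly touch $J_f$, dictated entirely by the three-part description of $\Gamma$ in \eqref{eq:gamma}.
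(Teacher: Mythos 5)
Your proposal handles only half of the lemma. The step where you invoke Proposition~\ref{prop:key} to conclude $\bigcap_n Y_n(z)=\tb{e}(z)=\{z\}$ is not legitimate in the setting of Lemma~\ref{lem:finitely}. Proposition~\ref{prop:key} requires \emph{each external ray landing at $z$ to be fixed}; but in the thickened-puzzle subsection, $z$ is a repelling fixed point whose rays have period $p\geq 1$, and $p$ may be $\geq 2$. When $p\geq 2$, $f$ permutes the puzzle pieces in $Y_n(z)$ cyclically and the argument of Proposition~\ref{prop:key} (which leans on $f(P_{n+1})=P_n$) does not go through. Worse, the conclusion itself can fail: the paper's own example — the Douady rabbit tuned by $z^2+\tb{i}$ — produces an $\alpha$-fixed point $\xi$ with $\tb{e}(\xi)$ a union of three Julia-set copies. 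In that situation $Y_n(z)$ never shrinks into $\mc{N}$, so $\partial Y_n(z)$ is not a viable candidate for $\gamma$.

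The paper's proof explicitly forks on this dichotomy. If $\tb{e}(z)$ happens to be a singleton, it does exactly what you propose: take $\gamma=\partial Y_n(z)$ for large $n$. But the other case, $\tb{e}(z)$ non-trivial, is handled by a completely different mechanism that your proposal omits: Kiwi's theory of fibers. By \cite[Theorem 2]{Ki} the fiber of $z$ is a single point; the proof of \cite[Corollary 2.16]{Ki} then produces a component $X$ of $J_f\sm S$ (with $S$ a finite preperiodic set) compactly contained in $\mc{N}$ and containing $z$; and \cite[Lemma 2.17]{Ki} yields a Jordan curve $\gamma'\subset\mc{N}$ with $\gamma'\cap J_f\subset X\cap f^{-n}(Z)$, which is the required curve. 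This is the essential content of the lemma, and it is missing from your write-up. (Your structural inspection of $f^{-n}(\Gamma)\cap J_f$ in the singleton case, on the other hand, is sound.)
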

\begin{proof}
We may assume $f:\mc{N}\to f(\mc{N})$ is conformal and $\mc{N}\subset f(\mc{N})$. If $\tb{e}(z)$ is a singleton, then for sufficiently large $n$ we have $Y_n(z)\Subset \mc{N}$. We are done by letting $\gamma=\partial Y_n(z)$.  

In other cases, according to \cite[Theorem 2]{Ki}, the \emph{fiber} of $z$ consists of a single point. For the definition of fibers, refer to \cite[Definition 2.5]{Ki}. Consequently, the discussion in the proof of \cite[Corollary 2.16]{Ki} implies that $\mc{N}$ compactly contains a component $X$ of $J_f\sm S$ with $z\in X$, where $S$ is a finite set consisting of preperiodic points in $J_f$. By applying \cite[Lemma 2.17]{Ki}, we obtain a Jordan curve $\gamma'$ such that $\gamma'\cap J_f\subset X$ and $\gamma'\subset \mc{N}$.
\end{proof}
	Now, the \emph{thickened puzzle piece} $\wh{P}_0$ of depth zero is the interior of the union of $\ol{P_0}$ and $\ol{\Delta}$, where $\Delta$ is a Jordan disk bounded by
	
	(1) an arc $\alpha$ in $\partial P_0$ passing through $z$ (formed by two rays in the Fatou set); and
	
	(2) another arc $\beta$ close to $\alpha$ but lying outside of $\ol{P_0}$ that joins endpoints of $\alpha$. 
	
	\noindent By Lemma \ref{lem:finitely}, we can take the thickening part $\Delta$ in such a way that the intersection $\beta\cap J_f$ contains only a finite number of perperiodic points.
By choosing $\beta$ to be sufficiently close to $\alpha$, we can ensure further that $\Delta$ satisfies the additional restriction: for every critical point $c$ and integer $l\geq 0$, it holds that
	$$\{c, f(c),\ldots, f^l(c)\}\subset  \ol{P_0}\Longleftrightarrow \{c, f(c),\ldots, f^l(c)\}\subset \ol{\wh{P}_0}.$$
	
	The \emph{thickened puzzle piece} $\wh{P}_{n}$ of depth $n\geq 1$ is defined as the component of $f^{-np}(\wh{P}_0)$ containing $P^*_{n}$. Note that $P^*_{n}$ is a component of $f^{-np}(P_0)$. By construction, $\wh{P}_n\sm \ol{P^*_{n}}$ is formed by components that are eventually mapped onto $\Delta$ comformally. See figure \ref{fig:thicken}~.
The virtue of these thickened puzzle pieces is the following statement.
\begin{lem}\label{lem:thicken-nest}
	$\bigcap_n\ol{P_{n}^*}=\bigcap_n\ol{\wh{P}_{n}}.$ Moreover, for any integer $n>0$, there is an integer $n'>n$ such that $\wh{P}_{n'}\Subset \wh{P}_{n}$.
\end{lem}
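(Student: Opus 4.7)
My plan is to establish the two parts of the lemma in sequence, starting with the equality of infinite intersections.

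For $\bigcap_n \ol{P_n^*} = \bigcap_n \ol{\wh{P}_n}$, one inclusion is immediate from $P_n^* \subset \wh{P}_n$. For the reverse, I would argue by contradiction: suppose $w \in \bigcap_n \ol{\wh{P}_n}$ and $w \notin \ol{P_N^*}$ for some $N$. Then $w$ lies in the closure of a component $V$ of $\wh{P}_N \setminus \ol{P_N^*}$, and by construction some iterate $g^j = f^{jp}$ with $j \leq N$ maps $V$ conformally onto $\Delta$, so $g^j(w) \in \ol{\Delta}$. Using $g^k(\ol{\wh{P}_{k+m}}) \subset \ol{\wh{P}_m}$ and $w \in \ol{\wh{P}_m}$ for all $m$, the entire forward $g$-orbit of $w$ lies in $\ol{\wh{P}_0}$. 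I then plan to exploit the local dynamics at $z$: after shrinking $\beta$ toward $\alpha$ if necessary, $\Delta$ lies in a linearizing chart of the repelling fixed point $z$, so $g$ expands $\ol{\Delta}$ away from $z$. Any forward orbit starting in $\ol{\Delta} \setminus \{z\}$ must leave $\ol{\Delta}$ in finitely many steps, and a case analysis tracking possible re-entries through the shared arc $\alpha$ rules out indefinite oscillation between $\ol{\Delta}$ and $\ol{P_0}$, forcing the orbit out of $\ol{\wh{P}_0}$. The remaining possibility $g^j(w) = z$ would place $w$ on $\partial V \cap \partial P_N^* \subset \ol{P_N^*}$, contradicting $w \notin \ol{P_N^*}$.

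For the compact nesting, I first use Part 1 to identify $S := \bigcap_n \ol{\wh{P}_n}$ with $\bigcap_n \ol{P_n^*}$, the fiber of $z$ in the $P_0$-direction. I claim $S \subset \operatorname{int}(\wh{P}_n)$ for every $n$. By Lemma \ref{lem:finitely}, $\partial \wh{P}_n \cap J_f$ is a finite set of preperiodic points; these split into points on pullbacks of $\beta$ (which lie strictly outside $\ol{P_n^*}$, hence not in $S$) and points on untouched parts of $\partial P_n^*$ (which lie away from the $z$-direction and escape the nested sequence $\{P_m^*\}$ for $m$ large, hence also not in $S$). The fixed point $z$ itself is an interior point of $\wh{P}_n$ because the thickening converts the arc $\alpha$ through $z$ into an interior arc. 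Fixing $n$ and setting $U = \operatorname{int}(\wh{P}_n)$, the closed sets $\ol{\wh{P}_m}$ for $m \geq n$ shrink toward $S \subset U$ in Hausdorff sense (via the conformal contraction of the $\Delta$-pullbacks composing $\wh{P}_m \setminus \ol{P_m^*}$, together with the shrinking of $P_m^*$ itself), whence some $\ol{\wh{P}_{n'}} \subset U$, giving $\wh{P}_{n'} \Subset \wh{P}_n$.

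The main obstacle will be the reverse inclusion in Part 1: precisely ruling out pathological orbits that oscillate between $\ol{\Delta}$ and $\ol{P_0}$ without ever leaving $\ol{\wh{P}_0}$. This requires quantifying the expansion of $g$ at each re-entry into $\ol{\Delta}$ through the arc $\alpha$ using the linearizing chart at $z$, and choosing $\Delta$ small enough that the successive expansions accumulate to an eventual escape.
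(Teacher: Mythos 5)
Your Part 1 is the right idea but you have manufactured an obstacle that does not exist. Once the orbit of a point enters $\Delta$, it cannot ``re-enter through $\alpha$'': the arc $\alpha$ consists of segments of the external rays landing at $z$, which have period $p$ and are hence forward-invariant under $g=f^p$ and lie in the Fatou set. Near the repelling fixed point $z$ the map $g$ is a local biholomorphism preserving the sectors cut out by these rays, so a $J_f$-orbit that starts on the $\Delta$-side of $\alpha$ stays on the $\Delta$-side while it is near $z$, and once the expansion pushes it out of $\overline{\Delta}$ it is off $\overline{\widehat{P}_0}=\overline{P_0}\cup\overline{\Delta}$ entirely. No quantitative bookkeeping of ``successive expansions across re-entries'' is needed; the paper simply states that points in $\Delta$ are eventually mapped out and moves on.

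Part 2, however, has a real gap exactly where the paper does nontrivial work. You assert that the untouched points of $\partial P_n^*\cap J_f$ ``lie away from the $z$-direction and escape the nested sequence $\{P_m^*\}$ for $m$ large, hence are not in $S$.'' But this is precisely what must be proved: a priori, $S=\bigcap\overline{P_m^*}$ could touch $\partial P_n^*$ at a $J_f$-vertex other than $z$ (note one may not yet assume $\mathbf{e}(z)$ is trivial, since $p\geq 2$). The paper instead establishes the sharp claim $E\cap\partial P_0^*=\{z\}$ and only then concludes $E\cap\partial\widehat{P}_0=\varnothing$, hence $\widehat{P}_N\Subset\widehat{P}_0$ for some $N$, pulling this back by $g^{-n}$ to get $\widehat{P}_{n+N}\Subset\widehat{P}_n$. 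The proof of $E\cap\partial P_0^*=\{z\}$ uses the cyclic period-$p$ structure of the rays in an essential way: $E$ lives in one sector at $z$ and $f(E)$ in another, so $E\cap f(E)=\{z\}$ and $E$ contains no fixed point but $z$; then $\partial P_{n_0}^*\cap Z=\{z\}$ for large $n_0$; then the $p$ curves $f^i(\partial P_{n_0+1}^*)$, $1\leq i\leq p$, pairwise meet $J_f$ only at $z$; finally since $\partial P_0^*\cap J_f\subset f^{-1}(Z)$ one deduces $\partial P_{n_0+1}^*\cap\partial P_0^*=\{z\}$. None of this appears in your sketch, and ``escape the nested sequence'' is not a substitute for it. You would need to supply this argument (or an equivalent) to close the gap.
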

\begin{proof}
	Let $E=\bigcap_n\ol{\wh{P}_{n}}$ and $g:=f^p:\ol{\wh{P}_1}\to\ol{\wh{P}_0}$. Note that $E$ consists of points with their $g$-orbits stay in the closure  of $\wh{P}_1$. Let $\xi\in E$. Then $\xi$ cannot be iterated into $\Delta$ under $g$. This is because points in $\Delta$ will be eventually mapped outside of $\Delta$. Thus $\xi\in \ol{P^*_{n}}$ for all $n\geq 0$. Hence $E\subset \bigcap_n\ol{P^*_n}$. The converse is clearly true.
	
We claim that $E\cap \partial P_0^*=\{z\}$. Note that $E$ is contained in a sector bounded by a pair of rays in $\partial P_0$ that converge to $z$. If $p=1$, $f(E)=E=\{z\}$ by Proposition \ref{prop:key} and it holds clearly. If $p\geq 2$, we have $f(E)\cap E=\{z\}$. Then $E$ does not contain any other fixed points of $f$ except $z$. Thus, $\partial P^*_{n_0}\cap Z=\{z\}$ for a large $n_0$. Note that any pair of $$ f(\partial P^*_{n_0+1}), \ldots, f^p(\partial P^*_{n_0+1})=\partial P^*_{n_0}$$ have no common points in $J_f$ except $z$. Since $\partial P^*_0\cap J_f$ consists of points in $f^{-1}(Z)$, it follows that $\partial P^*_{n_0+1}\cap \partial P^*_0=\{z\}$. This implies $E\cap\partial P^*_0=\{z\}$.

Since a thickened puzzle piece contains $z$ in its interior, we have $E\cap \partial \wh{P}_0=\es$ by the claim.  Consequently, $\wh{P}_N\Subset \wh{P}_0$ for a large integer $N$. By applying the inverse of $g^{n}$, we conclude that $\partial\wh{P}_{n'}\cap \partial \wh{P}_{n}=\es$ with $n'=N+n$. The proof is complete.	
\end{proof}

\begin{figure}[h]
	\begin{tikzpicture}
		\node at (0,0) {\includegraphics[width=12cm]{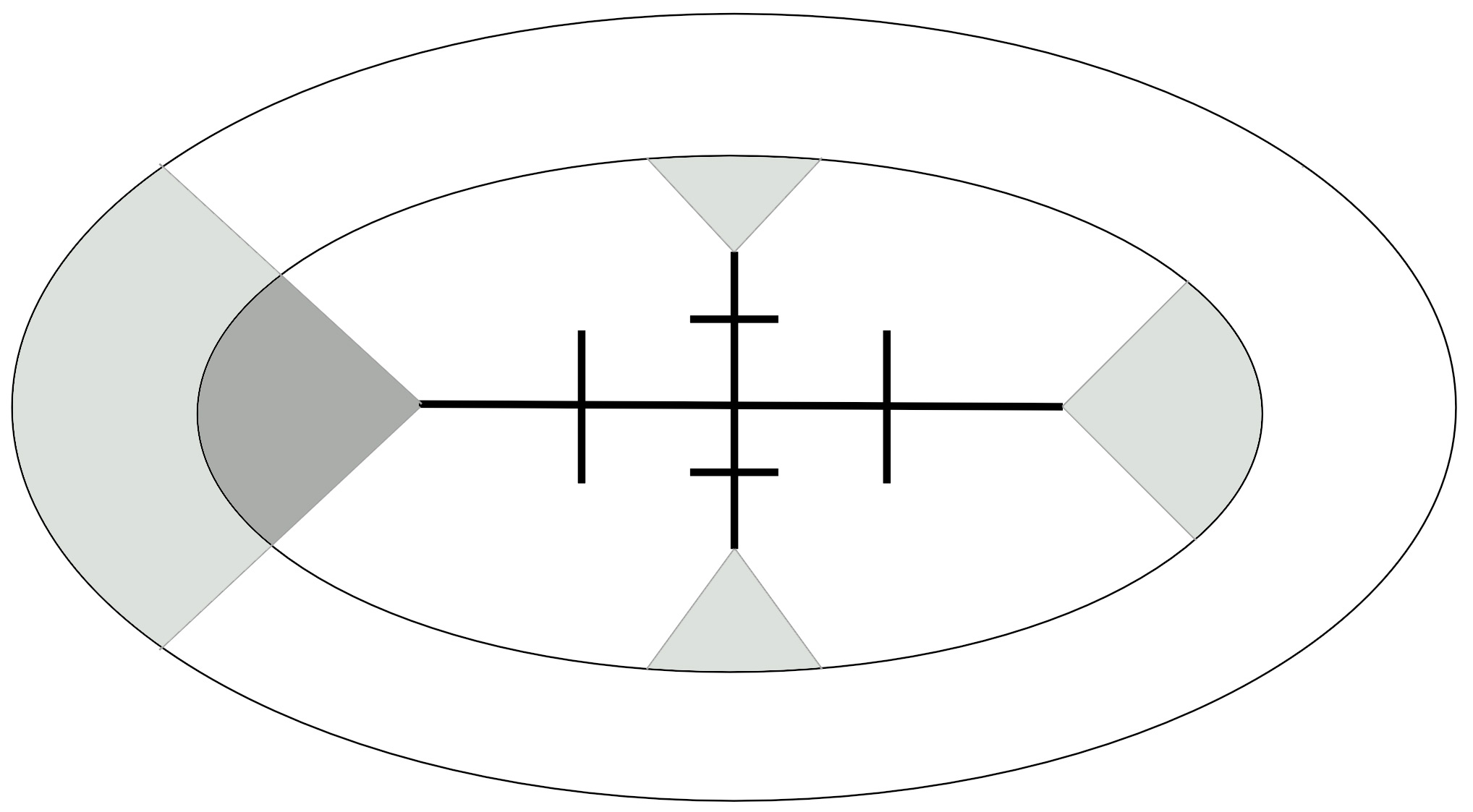}};
		\node at (1.75,1.25) {$P^*_{N}$};
		\node at (2.55,2.25) {$P^*_{N'}$};
		\node at (-5,0) {$D$};
		\node at (5, 0){$W_i$};
	\end{tikzpicture}
	\caption{Illustrating the proof of Lemma \ref{lem:enlarge}. The two ellipses represent the two thickened nested puzzle pieces $\wh{P}_N$ and $\wh{P}_{N'}$, while the shaded area represents the thickened parts $\wh{P}_n\sm P^*_n$ for $n=N, N'$. One of the four components $\wh{P}_N\sm P^*_{N}$ is nested in $D=\wh{P}_{N'}\sm P^*_{N'}$.}
	\label{fig:thicken}
\end{figure}
\subsection{Around repelling satellite ends}
We still assume $z\in Z$ is a repelling fixed point and $E=\bigcap_n\ol{\wh{P}_n}$. The next lemma is an analogue of Lemma \ref{lem:parabolic}.
\begin{lem}\label{lem:enlarge}
	There are finitely many pairs $(U_i, V_i)$ of Jordan disks, with $U_i\Subset V_i\subset \wh{P}_0$, and a positive integer $N_0>0$ with the following properties.
	
	Any primitive end $\tb{\emph{w}} \subset P_{N_0}$ will be mapped into some $U_i$ under some iterate $f^k$, moreover the map
	$$f^k: (U_i^k, V_i^k)\to (U_i, V_i)$$
	is conformal, where $U_i^k\Subset V_i^{k}\subset P_0$ are the components of $f^{-k}(U_i)$ and $f^{-k}(V_i)$ containing $\tb{\emph{w}}$, respectively. 
\end{lem}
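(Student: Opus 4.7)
The strategy parallels Lemma \ref{lem:parabolic}, but uses the thickened pieces $\wh{P}_n$ in place of $P_n$ (so that $z$ lies in their interior) and the iterate $g=f^p$ in place of $f$ (to absorb the period of the rays landing at $z$). By Lemma \ref{lem:thicken-nest} I choose integers $N<N'$ with $\wh{P}_{N'}\Subset\wh{P}_N$. The first claim is that, after enlarging $N$ if necessary, $g\colon\wh{P}_{N+1}\to\wh{P}_N$ is univalent: Lemma \ref{lem:fixed-points} applied to this weakly polynomial-like restriction reduces this to the statement that $z$ is the unique $g$-fixed point in $\ol{\wh{P}_{N+1}}$, which in turn follows from the analysis of $E=\bigcap_n\ol{\wh{P}_n}$ in the proof of Lemma \ref{lem:thicken-nest}, since no other $f$-periodic point of period dividing $p$ survives in $E$ once $N$ is taken large enough. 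Pulling back inductively, every $g^j\colon\wh{P}_{N+j}\to\wh{P}_N$ is then conformal.

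Next I set up the pairs $(U_i,V_i)$. The region $\wh{P}_N\sm\ol{\wh{P}_{N'}}$ meets $J_f$ in finitely many continua, by the finiteness of $\partial\wh{P}_n\cap J_f$ coming from Lemma \ref{lem:finitely} together with the construction of the thickening. About each such continuum I place a Jordan-disk pair $U_i\Subset V_i\Subset\wh{P}_N\subset\wh{P}_0$, with the closures $\ol{V_i}$ pairwise disjoint, disjoint from $E$, and disjoint from those critical orbits of $f$ that eventually exit $\wh{P}_0$. The resulting collection is finite.

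For the escape step, choose $N_0$ large enough that $P_{N_0}\Subset\wh{P}_{N'}$ and let $\tb{w}\subset P_{N_0}$ be a primitive end. Its forward $g$-orbit is a sequence of primitive ends by Lemma \ref{lem:ends}(1), and this orbit cannot lie in $\ol{\wh{P}_{N'}}$ for all time: otherwise $\tb{w}\subset E$, which is incompatible with primitivity via Proposition \ref{prop:key} combined with the disjointness in Lemma \ref{lem:ends}(2). Let $j\geq 1$ be the first iterate with $g^j(\tb{w})\not\subset\ol{\wh{P}_{N'}}$; then $g^j(\tb{w})\subset\wh{P}_N\sm\ol{\wh{P}_{N'}}$ lies inside a unique $U_i$. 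Pulling $(U_i,V_i)$ back along the orbit by $g^j=f^{jp}$, and invoking the conformality of the first step, yields components $U_i^k\Subset V_i^k$ of $f^{-k}(U_i)$ and $f^{-k}(V_i)$ containing $\tb{w}$ with $k=jp$; by the disjointness conditions imposed on the $V_i$ the component $V_i^k$ stays inside $P_0$ as required.

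The principal obstacle is the univalence claim of the first step: isolating $z$ as the only $g$-fixed point in $\ol{\wh{P}_{N+1}}$ requires a careful use of the structure of $E$, and if this count fails the whole conformal-pullback scheme collapses. Once univalence is in place, the escape and pullback steps become routine analogues of the parabolic argument in Lemma \ref{lem:parabolic}.
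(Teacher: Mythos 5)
Your proposal contains a genuine error at the step you yourself flag as the ``principal obstacle.'' The claim that $g\colon\wh{P}_{N+1}\to\wh{P}_{N}$ becomes univalent after enlarging $N$ is false in exactly the case the lemma is designed for. Here $z\in Z$ is a repelling fixed point whose external rays have period $p\geq 2$, so Proposition \ref{prop:key} does \emph{not} apply to $z$, and $E=\bigcap_n\ol{\wh{P}_n}$ may be a nontrivial continuum --- e.g.\ the small filled Julia set of a satellite renormalization. If $E$ contains a critical point $c$ of $f$ (which it does whenever the satellite end is nontrivial and carries a renormalization), then $c$ is a critical point of $g=f^p$ sitting inside $\wh{P}_{N+1}$ for every $N$, so $\deg(g\colon\wh{P}_{N+1}\to\wh{P}_N)\geq 2$ for all $N$. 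Your appeal to Lemma \ref{lem:fixed-points} actually points the other way: $E$ then contains at least two $g$-fixed points (for instance $z$ together with the $\beta$-type fixed point of the renormalized map), and they all persist in $\ol{\wh{P}_{N+1}}$ since $E=\bigcap_n\ol{\wh{P}_n}$; enlarging $N$ removes nothing. So the premise ``$z$ is the unique $g$-fixed point in $\ol{\wh{P}_{N+1}}$'' cannot be achieved, and with it the conformal-pullback scheme as you set it up collapses.

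The paper's proof sidesteps this by never asking $\wh{g}=g^{N-N'}\colon\wh{P}_N\to\wh{P}_{N'}$ to be degree one. It first picks $N'$ large enough that $\wh{P}_{N'}\sm E$ contains no critical points, so the restriction $\wh{g}\colon\wh{P}_N\sm E\to\wh{P}_{N'}\sm E$ is an \emph{unbranched} covering between annuli even when $\deg\wh{g}\geq 2$. Because the disks $U_i\Subset V_i$ sit in the fundamental annulus $A=\wh{P}_{N'}\sm\ol{\wh{P}_N}$ and are disjoint from $E$, and because the orbit of a primitive end $\tb{w}\subset P_{N_0}$ stays off $E$ (Corollary \ref{cor:disjoint} / Lemma \ref{lem:ends}(2)), each component of $\wh{g}^{-k}(V_i)$ met by the orbit is mapped conformally onto $V_i$ regardless of the global degree. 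That is the observation your proof needs in place of the univalence claim. Your later step --- concluding $V_i^k\subset P_0$ from ``the disjointness conditions imposed on the $V_i$'' --- is also underjustified: the paper proves this by tracking preimages of $D=\wh{P}_{N'}\sm P^*_{N'}$ (each component of $\wh{g}^{-n}(D)$ is either nested in $D$ or compactly contained in $P^*_{N'}$), which uses the specific way the thickening flap $\Delta$ was chosen, not merely positioning of the $V_i$.
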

\begin{proof}
	We may assume $\wh{P}_{N'}\sm E$ is disjoint from critical points of $f$ for a large $N'$. By Lemma \ref{lem:thicken-nest}, we can take a thicken puzzle pieces $\wh{P}_{N}$ that is compactly contained in $\wh{P}_{N'}$. Then we obtain a covering
	$$\wh{g}:=g^{N-N'}: \wh{P}_{N}\sm E\to \wh{P}_{N'}\sm E$$
	between the two annuli. Let $D=\wh{P}_{N'}\sm P^*_{N'}$. Then $\partial D\cap \partial P_{N'}^*$ consists of rays $R_j$ in the Fatou set, together with their landing points. These rays $R_j$ cut the annulus $A:=\wh{P}_{N'}\sm \ol{\wh{P}_N}$ into disjoint quadrilaterals $W_i$. For each $W_i$, we take two Jordan disks $U_i$ and $V_i$ such that $$W_i\cap J_f\subset U_i\Subset V_i\subset \wh{g}(\wh{P}_{N'})\tu{ and }V_i\cap R_j=\es$$ for all $R_j$. This can be done, since $R_j$ are contained in the Fatou set.
	
	Note that any primitive end $\tb{{w}}\subset P_N$ is disjoint from $E$ and from boundaries of puzzle pieces. Thus $\tb{{w}}$ will eventually be mapped under $\wh{g}$ into the annulus $A$. Assume $\tb{{w}}_k=\wh{g}^k(\tb{{w}})\subset A$. Since $\tb{{w}}_k\subset J_f$, some $U_i$ contains $\tb{{w}}_k$. Let $U_i^k$ and $V_i^k$ be the components of $U_i$ and $V_i$ containing $\tb{{w}}_i$. By the Riemann-Hurwitz formula, they are Jordan disks and $\wh{g}^k:(U_i^k, V_i^k)\to (U_i, V_i)$ is conformal. 
	
	By the choice of $\Delta$, a component $D_n$ of $\wh{g}^{-n}(D)$ is either contained in $D$ or disjoint from $D$; and $D_n\Subset P^*_{N'}$ in the latte case. Since $\tb{{w}}\subset P_{N}^*$, the component of the iterated pre-images of $D$ containing $\tb{{w}}_k$, if any, belongs to $P^*_{N'}$, thus $V^k_i\subset P^*_{N'}\subset P_0$. 
	
	Let $P_{N_0}=P_{Np}=P^*_{N}$. The proof is complete.	
\end{proof}
\section{Wandering ends}
The aim of this section is to prove that wandering ends are trivial. We will show that non-persistently recurrent ends satisfy the bounded degree condition, making them trivial according to Lemma \ref{lem:singleton}. While the triviality of persistently recurrent ends follows from the well-known result on the existence of complex bounds. 

There are arguments in \cite{WYZ,RYZ} concerning the triviality of ends, which are similar in spirit to our approach here. However, what sets this paper apart is the existence of non-trivial satellite ends, which brings about a significant distinction. See Lemma \ref{lem:periodic} and Lemma \ref{lem:singleton}.

Throughout this section we still assume that $f$ is a polynomial having connected Julia set and without irrationally neutral cycles, satisfying Assumptions (A1) and (A2).
\subsection{Distortion and shrinking lemmas}
Let $A\subset\mb{C}$ be an annulus with non-degenerated boundary components. Then there exists a confromal map sending $A$ to a standard annulus $\{z\in\mb{C}: 0<r<|z|<1\}$, where $r>0$ is uniquely determined by $A$. The \emph{modulus} of $A$ is defined as $\tu{mod}(A)=\frac{1}{2\pi}\tu{log}(1/r)$, which is invariant under conformal maps.

Consider a Jordan disk $U\subset \mb{C}$ and a point $z\in U$. The \emph{shape} of $U$ about $z$ is defined by
$${\rm Shape}(U,z)=\frac{\sup_{x\in \partial U}|x-z|}{\inf_{x\in \partial U}|x-z|}
.$$
It is clear that ${\rm Shape} (U,z)=1$ if and only if $U$ is a round
disk centered at the point $z$.

\begin{lem}\cite[Lemma 2.2]{RYZ}\label{4:distortionlemma}
	Let $U\Subset V\subset \mathbb{C}$ be Jordan disks with  $\tu{mod}(V\setminus\overline{U})\ge m>0$. Then there is a positive constant $C=C(m)$
	such that for any two points $x,y\in U$,
	$${\rm Shape} (V,y)\leq C\cdot{\rm Shape} (V,x).$$
\end{lem}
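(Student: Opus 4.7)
The plan is to analyze the numerator $D_+(z) := \sup_{p\in\partial V}|p-z|$ and denominator $D_-(z) := \inf_{p\in\partial V}|p-z|$ of the shape separately, exploiting that the modulus hypothesis translates, via the Riemann map $\phi\colon\mathbb{D}\to V$, into a bound on the hyperbolic diameter of $U$ in $V$ by some $d(m)$.

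First I would observe that the $D_+$ comparison is essentially trivial: since $V$ is a bounded Jordan disk, choose $p_1,p_2\in\partial V$ realizing $\operatorname{diam}(V)$. For any $z\in\overline{V}$ the triangle inequality gives $\max(|p_1-z|,|p_2-z|)\geq\operatorname{diam}(V)/2$, while $D_+(z)\leq\operatorname{diam}(V)$, so $D_+(x)\leq 2D_+(y)$ for all $x,y\in\overline{V}$.

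The substantive step is the $D_-$ comparison. For $z\in V$, Koebe's $1/4$ theorem together with the Schwarz lemma applied to $\phi$ shows that $D_-(z)$ is comparable, up to a universal factor, to the reciprocal of the hyperbolic density $\lambda_V(z)$. The standard distortion theorem for univalent maps on $\mathbb{D}$, applied after a M\"obius self-map of $\mathbb{D}$ sending $\phi^{-1}(x)$ to the origin, then implies that $\lambda_V$ varies by a bounded multiplicative amount on hyperbolic balls of bounded radius. In particular, $\lambda_V(x)/\lambda_V(y)$, and therefore $D_-(x)/D_-(y)$, is bounded by some constant $c_1(m)$ for all $x,y\in U$.

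Combining these two estimates yields
\[
\operatorname{Shape}(V,y) \;=\; \frac{D_+(y)}{D_-(y)} \;\leq\; \frac{2\,D_+(x)}{D_-(x)/c_1(m)} \;=\; 2c_1(m)\cdot\operatorname{Shape}(V,x),
\]
so the lemma holds with $C(m)=2c_1(m)$. The main technical obstacle is the Koebe-based distortion estimate for $D_-$; the passage from bounded modulus to bounded hyperbolic diameter and the factor-of-two bound for $D_+$ are both routine.
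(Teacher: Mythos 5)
Your proof is correct. The paper itself does not prove this lemma but cites it from \cite{RYZ}; your argument is the standard one. The decomposition into the numerator $D_+$ (controlled trivially by the diameter of $V$, giving a universal factor $2$) and the denominator $D_- = \mathrm{dist}(\cdot,\partial V)$ (comparable to $1/\lambda_V$ by Koebe $1/4$ and Schwarz--Pick, then controlled on a set of bounded hyperbolic diameter by Koebe distortion) is exactly how such a statement is usually proved. The only step you gloss over is the passage from $\mathrm{mod}(V\setminus\overline U)\ge m$ to a bound $d(m)$ on the hyperbolic diameter of $U$ in $V$: this needs Gr\"otzsch's modulus theorem applied to $\phi^{-1}(U)\subset\mathbb D$ after normalizing one point to the origin, which forces $\phi^{-1}(U)$ into a compact subdisk $\{|w|\le R_0(m)\}$. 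With that supplied, the chain $D_-(z)\asymp 1/\lambda_V(z)$ together with $\bigl|\log\lambda_V(x)-\log\lambda_V(y)\bigr|\lesssim d_V(x,y)\le d(m)$ closes the argument as you indicate.
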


The following lemma tells us that shapes by pullback via proper holomorphic maps can be controlled by some constants depending only on degrees and moduli.

\begin{lem}\cite[Lemma 4.7]{RY}\label{2:distortionlemma}
	Let $U_i\Subset V_i\subset\mathbb{C}$, $i\in \{1,2\}$, be Jordan disks with $\tu{mod}(V_2\setminus U_2)\geq m>0$. Suppose that $g:V_1\to V_2$ is
	a proper holomorphic map of degree $\leq\delta$, and $U_1$ be
	a component of $g^{-1}(U_2)$. Then, there is a positive constant $C=C(\delta, m)$ such that for all $z\in U_1$
	$${\rm Shape} (U_1,z)\leq C\cdot{\rm Shape} (U_2,g(z)).$$
\end{lem}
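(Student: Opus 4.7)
The plan is to reduce the statement to a normalized disk model via Riemann maps, and then apply Koebe distortion in tandem with the modulus inequality for annuli under branched covers.

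I would first promote the hypothesis $\tu{mod}(V_2\sm\ol{U_2})\geq m$ to a two-sided thickening: pick a Jordan disk $W_2$ with $U_2\Subset W_2\Subset V_2$ and both sub-annuli $W_2\sm\ol{U_2}$, $V_2\sm\ol{W_2}$ of modulus $\geq m/2$. Let $W_1$ be the component of $g^{-1}(W_2)$ containing $U_1$; the restriction $g\colon W_1\to W_2$ is proper of some degree $d_1\leq\delta$, and the Gr\"otzsch-type inequality for moduli under branched covers yields $\tu{mod}(V_1\sm\ol{W_1})\geq m/(2\delta)=:m_1>0$.

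Next I would normalize with Riemann maps $\phi_1\colon V_1\to\mb D$ and $\phi_2\colon V_2\to\mb D$ such that $\phi_1(z)=0=\phi_2(g(z))$. Conformal invariance of modulus gives $\phi_i(\ol{W_i})\subset r_i\ol{\mb D}$ for radii $r_i=r_i(m_i)<1$; Koebe distortion applied to $\phi_i^{-1}$ on $r_i\mb D$ then implies that $\tu{Shape}(U_i,\cdot)$ and $\tu{Shape}(\phi_i(U_i),0)$ are comparable up to a constant depending only on $m$ and $\delta$. It therefore suffices to establish the inequality in the disk model, where $h:=\phi_2\circ g\circ\phi_1^{-1}$ is a Blaschke product of degree $\leq\delta$ with $h(0)=0$, and $\tilde U_1:=\phi_1(U_1)$ is the component of $h^{-1}(\tilde U_2)$ containing $0$.

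In the model, I would write $h(w)=w^k h_*(w)$ with $h_*(0)\neq 0$ and $1\leq k\leq\delta$, and set $\sigma(w):=w\,h_*(w)^{1/k}$ on any simply connected neighborhood of $0$ disjoint from the other zeros of $h$; on such a neighborhood $h(w)=\sigma(w)^k$. For any Jordan disk $D\ni 0$, a direct computation shows that the component $D^*\ni 0$ of $\{z:z^k\in D\}$ satisfies $\tu{Shape}(D^*,0)=\tu{Shape}(D,0)^{1/k}$. Provided $\sigma$ is univalent on a disk of uniform radius containing $\tilde U_1=\sigma^{-1}(\tilde U_2^*)$, Koebe bounds the distortion of $\sigma$ by $C(\delta,m)$, giving $\tu{Shape}(\tilde U_1,0)\leq C(\delta,m)\cdot\tu{Shape}(\tilde U_2,0)^{1/k}\leq C(\delta,m)\cdot\tu{Shape}(\tilde U_2,0)$.

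The main obstacle is guaranteeing that $\sigma$ is univalent on a disk of uniform radius containing $\tilde U_1$; a priori the other zeros of $h$ could cluster arbitrarily close to the origin. The resolution uses the thickening from Step 1: the other components of $h^{-1}(\tilde U_2)$ are separated from $\tilde U_1$ by an annulus of modulus $\geq m_1$ (lifted from $\tilde W_2\sm\ol{\tilde U_2}$), so the other zeros of $h$, which lie in those components, are separated from $0$ by a definite Euclidean distance depending only on $m$ and $\delta$. On the resulting definite disk $\sigma$ is univalent, and Koebe supplies the constant needed to close the estimate.
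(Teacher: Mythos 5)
The lemma is cited from Roesch--Yin and not proved in the paper, so I can only assess your argument on its own terms. Most of the scaffolding is sound: the two-sided thickening in Step~1, the modulus inequality $\tu{mod}(V_1\sm\ol{W_1})\geq m/(2\delta)$ via the essential annular component of $g^{-1}(V_2\sm\ol{W_2})$, the normalization by Riemann maps and the Koebe comparison $\tu{Shape}(U_i,z_i)\asymp\tu{Shape}(\phi_i(U_i),0)$, and the observation that $h=\phi_2\circ g\circ\phi_1^{-1}$ is a finite Blaschke product of degree $\leq\delta$ fixing $0$. (One small overclaim: $\tu{Shape}(D^*,0)=\tu{Shape}(D,0)^{1/k}$ should be ``$\leq$'' in general, since $\partial D^*$ maps \emph{into}, not necessarily onto, $\partial D$; this does not affect the direction of the estimate you need.)

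The genuine gap is the separation claim at the end. You assert that all zeros of $h$ other than $0$ lie in components of $h^{-1}(\tilde U_2)$ distinct from $\tilde U_1$, hence are pushed a definite distance from $0$ by the lifted annulus $\tilde W_2\sm\ol{\tilde U_2}$. This is false in general: $g:U_1\to U_2$ is a proper map whose degree $\ell$ can strictly exceed the local degree $k$ of $g$ at $z$. In that case the point $g(z)$ has $\ell-k>0$ further preimages inside $U_1$ itself, which become zeros of $h$ inside $\tilde U_1$. A concrete instance is $z$ a non-critical point of $g$ ($k=1$) with $\deg(g|_{U_1})=2$: then $h$ has a second simple zero in $\tilde U_1$. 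For such $h$ there is no single-valued branch $\sigma=w\,h_*(w)^{1/k}$ defined and univalent on a disk containing $\tilde U_1$ (indeed $h|_{\tilde U_1}$ is not injective, so it cannot equal $\sigma^k$ with $\sigma$ univalent), and Koebe cannot be applied as written. The other zero can in fact sit arbitrarily close to $0$ relative to $\tu{diam}(\tilde U_1)$, so the ``definite Euclidean separation'' you invoke is not available. To close the argument you must either treat the full proper map $h:\tilde U_1\to\tilde U_2$ of degree $\ell$ rather than its local branch, or first reduce to the case $\ell=k$ (for example by analyzing the Blaschke factors whose zeros lie in $\tilde U_1$ separately, using the fact that all zeros of $h$ lying in $\tilde W_1$ are confined to $r_1\ol{\mathbb D}$) and then apply your $k$-th-root decomposition. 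As it stands, the proof is complete only under the extra hypothesis that $z$ is the unique $g$-preimage of $g(z)$ in $U_1$.
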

\begin{lem}\label{lem:shrink}
	Let $E\subset J_f$ be a full continuum. Let $U\Subset V$ be Jordan disks such that $f^{n_k}(E)\subset U$ with $n_k\to \infty$ as $k\to\infty$. Suppose the maps $f^{n_k}:V_k\to V$ have bounded degree $\delta$, where $V_k$ are components of $f^{-n_k}(V)$ containing $E$. Then $E$ is a singleton. 
\end{lem}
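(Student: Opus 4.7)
The plan is to argue by contradiction: assume $\tu{diam}(E) = d > 0$ and produce a violation of Montel's normality theorem at a point of $J_f$. Set $m := \tu{mod}(V \sm \ol{U}) > 0$.

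The first step is the annulus bound $\tu{mod}(V_k \sm \ol{U_k}) \geq m/\delta$, uniform in $k$. This is the standard pullback estimate for moduli under a proper holomorphic map of bounded degree: since $f^{n_k}:V_k \to V$ has degree $\leq \delta$, the component of $f^{-n_k}(V \sm \ol{U})$ abutting $\partial V_k$ is an annular sub-domain of $V_k \sm \ol{U_k}$ that covers $V \sm \ol{U}$ (possibly with branching) to degree $\leq \delta$, so its modulus is at least $m/\delta$. Combined with Teichm\"uller's modulus theorem, this translates into the Euclidean separation
\begin{equation*}
\tu{dist}(U_k, \partial V_k) \geq c\cdot \tu{diam}(U_k) \geq c d,
\end{equation*}
with $c = c(\delta,m) > 0$.

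Now fix any $z_0 \in E$. Since $E \subset U_k$, the preceding estimate gives $B(z_0, cd) \subset V_k$ for every $k$. The key move is to upgrade from subsequence control to a bound on every iterate: given $n \geq 0$, choose $k$ with $n_k > n$; then $f^{n_k}(B(z_0, cd)) \subset V$, whence $f^n(B(z_0, cd)) \subset (f^{n_k - n})^{-1}(V)$. Because $f$ is a polynomial of degree $\geq 2$, one may pick $R > 0$ so large that the disk $\{|z|\leq R\}$ contains $V$ and is backward-invariant under $f$; consequently $(f^j)^{-1}(V) \subset \{|z|\leq R\}$ for every $j \geq 0$. Therefore $f^n(B(z_0, cd)) \subset \{|z|\leq R\}$ uniformly in $n$.

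The full family $\{f^n\}_{n \geq 0}$ is then uniformly bounded, hence normal by Montel's theorem, on the fixed neighborhood $B(z_0, cd)$ of $z_0$. But $z_0 \in E \subset J_f$ forbids normality on any neighborhood of $z_0$, the desired contradiction; so $E$ is a singleton. The main care is needed for the modulus pullback of Step~1 and its translation into a Euclidean estimate via Teichm\"uller's theorem, both standard once the bounded-degree hypothesis is in hand. The step that genuinely exploits $f$ being a polynomial is the uniform-boundedness argument via the backward-invariant disk around $K_f$; this is consistent with the fact that Theorem~\ref{th:Julia} fails for rational maps, as emphasized in the introduction via McMullen's example.
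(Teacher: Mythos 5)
Your proof is correct and shares the paper's skeleton: use the bounded-degree hypothesis to produce a Euclidean disk of definite radius around $z_0\in E$ inside every $V_k$, conclude that a neighborhood of $z_0$ has uniformly bounded forward images, and contradict $z_0\in J_f$. The tooling differs, though. The paper interposes $U\Subset W\Subset V$, bounds $\tu{Shape}(W_k,z)$ via Lemmas \ref{4:distortionlemma} and \ref{2:distortionlemma}, extracts $\tu{dist}(z,\partial W_k)\geq \lambda/(2C)$, and contradicts the presence of escaping points in $\mb{D}(z,\epsilon_0)$ directly along the subsequence $\{n_k\}$; you instead bound $\tu{mod}(V_k\sm\ol{U_k})$ from below, convert to a Euclidean separation via Teichm\"uller's theorem, upgrade from $\{n_k\}$ to all iterates using a backward-invariant round disk, and finish with Montel. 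Both work; the paper's ending is marginally shorter since it needs neither the upgrade to all iterates nor a normality argument. Two small things to repair in your write-up. First, you never define $U_k$: it should be the component of $f^{-n_k}(U)$ containing $E$, and one should remark it is a Jordan disk (each component of $f^{-n_k}(U)$ inside $V_k$ is simply connected by the maximum principle). Second, the outermost component of $f^{-n_k}(V\sm\ol{U})$ is an annulus only when $f^{n_k}$ has no critical values in $V\sm\ol U$, so your phrase ``annular sub-domain \dots\ (possibly with branching)'' is internally inconsistent as stated. The estimate $\tu{mod}(V_k\sm\ol{U_k})\geq m/\delta$ is nonetheless correct and standard --- pull back an extremal metric for $V\sm\ol{U}$ and note that any curve in $V_k\sm\ol{U_k}$ joining $\partial U_k$ to $\partial V_k$ has a terminal sub-arc mapping into $V\sm\ol{U}$ --- so it is cleanest to simply cite it as a known fact rather than argue via the outermost preimage component.
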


\begin{proof}
	We assume by contradiction that $\tu{diam}(E)=\lambda>0$. Let $W$ be a Jordan disk such that $U\Subset W\Subset V$. Let $W_k$ be the component of $f^{-n_k}(W)$ containing $E$. By Lemma \ref{4:distortionlemma}, there is a constant $C_1$ depending on $\tu{mod}(W\sm\ol{U})$ such that for all $x, y\in U$
	$$\tu{Shape}(W,y)\leq C_1\cdot\tu{Shape}(W,x).$$
	
	Fix a point $z\in E$. By Lemma \ref{2:distortionlemma}, there is a constant $C_2$ such that $$\tu{Shape}(W_k,z)\leq C_2\cdot\tu{Shape}(W, f^{n_k}(z))$$ for all $k$. 
	Since $f^{n_k}(z)\in U$, it follows from the above two inequalities that $\tu{Shape}(W_k, z)$ are bounded by a constant $C$. Then we have
	$$\tu{dist}(z,\partial W_k)\geq \frac{1}{C}\cdot \frac{\tu{diam}(W_k)}{2}\geq \frac{\lambda}{2C}=\epsilon_0.$$
	Thus the standard Euclidean disk $\mb{D}(z, \epsilon_0)\subset W_k$ for all $k$. Then $f^{n_k}(\mb{D}(z,\epsilon_0))\subset W$ is bounded. It contradicts the fact that $\mb{D}(z,\epsilon_0)$ contains points that escape to infinity.
\end{proof}
\subsection{Bounded degree condition}
An end $\tb{e}$ is called \emph{periodic} if $f^p(\tb{e})=\tb{e}$ for some positive integer $p$; and is called \emph{wandering} if $f^n(\tb{e})\cap f^m(\tb{e})=\es$ for any $n\neq m\geq 0$. By definition, wandering ends are primitive.

\begin{defi}
	A wandering end $\tb{e}$ is said to satisfy the \emph{bounded degree condition}, if there is a puzzle piece $V$ and integers $n_k\to\infty$ as $k\to\infty$ such that 
	$$\tu{deg}(f^{n_k}:V_k\to V)\leq \delta$$
	for all $n_k$, where $V_k$ are components of $f^{-n_k}(V)$ containing $\tb{e}$ and $\delta$ is a constant. 
\end{defi}

\begin{lem}\label{lem:singleton}
	A wandering end $\tb{e}$ satisfying the bounded degree condition is trivial.
\end{lem}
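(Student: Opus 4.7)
The plan is to apply Lemma \ref{lem:shrink}. The bounded degree condition provides a puzzle piece $V$ and integers $n_k\to\infty$ with $\tu{deg}(f^{n_k}\from V_k\to V)\leq\delta$, where $V_k$ is the component of $f^{-n_k}(V)$ containing $\tb{e}$. Write $\tb{e}_k:=f^{n_k}(\tb{e})$; by Lemma \ref{lem:ends}(1) each $\tb{e}_k$ is primitive, hence disjoint from $\partial V$ by Lemma \ref{lem:ends}(2), so $\tb{e}_k\subset V$. If $\liminf_k\tu{dist}(\tb{e}_k,\partial V)>0$, then after extraction the $\tb{e}_k$ lie in a common compact subset of $V$; picking a Jordan disk $U$ with $\tb{e}_k\subset U\Subset V$ for every $k$ in the subsequence, Lemma \ref{lem:shrink} immediately forces $\tb{e}$ to be a singleton.

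Otherwise some $w_k\in\tb{e}_k$ converge to a point $z\in\partial V\cap J_f\subset Z_\infty$; pigeonholing over the finite set $\partial V\cap Z_\infty$ fixes $z$, and iterating $j$ more times (which enlarges the degree bound by at most a factor $\tu{deg}(f)^j$) reduces to the case $f^j(z)\in Z$, i.e., $z$ is itself a fixed point. Then $z$ is repelling or parabolic, so Lemma \ref{lem:enlarge} (resp.\ Lemma \ref{lem:parabolic}) supplies a depth $N_0$ and finitely many pairs $(U_i,V_i)$ of Jordan disks such that every primitive end in $P_{N_0}$ is sent by some iterate $f^{l_k}$ into some $U_{i_k}$ with $f^{l_k}\from V_{i_k}^{l_k}\to V_{i_k}$ conformal on the component $V_{i_k}^{l_k}\ni\tb{e}_k$. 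Since $\tb{e}_k$ is a primitive end, connected and disjoint from all puzzle boundaries, and since some point of $\tb{e}_k$ tends to $z$, for $k$ large it lies in a single piece of $Y_{N_0}(z)$; two further pigeonholes arrange $\tb{e}_k\subset P_{N_0}$ and $i_k\equiv i_0$ along a subsequence.

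Because $\tb{e}_k$ approaches $z$, the number of iterates $l_k$ required to push $\tb{e}_k$ into $U_{i_0}$ tends to infinity, and Koebe distortion on the conformal maps $f^{l_k}\from V_{i_0}^{l_k}\to V_{i_0}$ shrinks $V_{i_0}^{l_k}$ onto $\tb{e}_k$, so $V_{i_0}^{l_k}\subset V$ for $k$ large. Then the component $T_k$ of $f^{-(n_k+l_k)}(V_{i_0})$ containing $\tb{e}$ satisfies $T_k\subset V_k$, and the composition $f^{n_k+l_k}\from T_k\to V_{i_0}$ has degree bounded uniformly in $k$ (the conformal factor $f^{l_k}$ contributes nothing). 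Lemma \ref{lem:shrink} applied with $(U,V)=(U_{i_0},V_{i_0})$ and $m_k=n_k+l_k\to\infty$ then forces $\tb{e}$ to be a singleton. The main obstacle is this composition step: the bounded-degree pullback from the hypothesis is tied to a puzzle piece $V$ whose boundary $\tb{e}_k$ may approach, and combining it with the conformal pullback supplied by Lemmas \ref{lem:enlarge}/\ref{lem:parabolic} requires the conformal preimages $V_{i_0}^{l_k}$ to eventually lie inside $V$, which is what Koebe distortion around the fixed point $z$ delivers.
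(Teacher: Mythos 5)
Your proof follows the same overall architecture as the paper's: split into the easy case (ends $f^{n_k}(\tb{e})$ staying away from $\partial V$, handled directly by Lemma~\ref{lem:shrink}) and the hard case (accumulation at a point $z\in\partial V\cap Z_\infty$), reduce by a finite iterate to a fixed $z\in Z$, invoke Lemma~\ref{lem:parabolic} or Lemma~\ref{lem:enlarge}, and compose the resulting conformal pullback with the original bounded-degree pullback before applying Lemma~\ref{lem:shrink} again.

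However, the step you flag yourself as the ``main obstacle''---establishing $V_{i_0}^{l_k}\subset V$---is where your argument has a genuine gap, and it is a gap that the paper avoids entirely. You try to derive $V_{i_0}^{l_k}\subset V$ from $l_k\to\infty$ and Koebe distortion, reasoning that the conformal preimages shrink onto $\tb{e}_k$. But shrinking onto $\tb{e}_k$ is not enough: the ends $\tb{e}_k$ are themselves approaching $\partial V$ (that is exactly the case you are in), so even very small neighborhoods of $\tb{e}_k$ could poke across $\partial V$ unless the shrinking rate beats the rate at which $\tb{e}_k$ approaches the boundary, and nothing in your argument controls that race. Moreover, Koebe distortion bounds shape, not size, so ``shrinks'' needs its own justification. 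In fact this containment is not analytic but combinatorial, and the paper has already built it into the statements of Lemma~\ref{lem:parabolic} and Lemma~\ref{lem:enlarge}: both conclude $U^k\Subset V^k\subset P_0$ (in Lemma~\ref{lem:parabolic}, because the conformal map $\wh{g}^k$ sends into $P_{N_0}\subset P_0$; in Lemma~\ref{lem:enlarge}, by the structural argument that iterated preimages of the thickening part $D$ that meet $P_N^*$ are compactly contained in $P_{N'}^*\subset P_0$). After your reduction to a fixed point, $V\subset Y_0(\xi)$ is a depth-$0$ puzzle piece, and one can take $P_0=V$ in those lemmas, so $V_{i_0}^{l_k}\subset V$ is immediate and the degree of the composition is bounded by the original $\delta$ (times the finite factor from the initial iterate). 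You should invoke that built-in containment rather than attempt an independent Koebe argument.
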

\begin{proof}
	We may assume the puzzle piece $V$ contains infinitely many ends $f^{n_k}(\tb{e})$, and the degree of $f^{n_k}:V_k\to V$ is bounded by $\delta$. If there is a puzzle piece $U\Subset V$ such that $\#\{k: f^{n_k}(\tb{e})\subset U\}=\infty$, we are done by Lemma \ref{lem:shrink}. 
	
	We assume a point $\xi\in \partial V\cap J_f\subset Z_\infty$ satisfies that, for each sufficiently large $n$, the depth-$n$ puzzle piece $P_n\subset Y_n(\xi)\cap V$ contains infinitely many ends from $\{f^{n_k}(\tb{e})\}$. By taking an iterate of $f$ on a high depth puzzle piece in $V$, whose boundary contains $\xi$, we assume further that $\xi$ is fixed and $V\subset Y_0(\xi)$ is a puzzle piece of depth zero.
	
	Let $N_0$ be the integer in Lemmas \ref{lem:parabolic} or \ref{lem:enlarge}. Then by passing to a subsequence of $\{f^{n_k}(\tb{e})\}$, there are integers $m_k$ and two Jordan disks $U'\Subset V'$ such that $f^{m_k+n_k}(\tb{e})\subset U'$ and the maps $f^{m_k}:V'_k\to V'$ are conformal. Here $V_k'\subset V$ are the components of $f^{-m_k}(V')$ containing $f^{n_k}(\tb{e})$. Let $V_k''$ be the components of $f^{-n_k}(V_k')$ containing $\tb{e}$. Then the compositions $f^{m_k+n_k}:V_k''\to V'$ have bounded degree $\delta$ as $V_k'\subset V$. By Lemma \ref{lem:shrink} again, $\tb{e}$ is a singleton. The proof is complete.
\end{proof}

\subsection{Combinatorial accumulation ends}
\begin{defi}
	Let $\tb{e}$ be a wandering end. The orbit of $\tb{e}$ is denoted by $\tu{orb}(\tb{e})=\{\tb{e}, f(\tb{e}),\ldots\}$.  An end $\tb{\emph{w}}(z)$ is called a \emph{combinatorial accumulation end} of the orbit if for any integer $n\geq 0$, the puzzle piece $P_n(z)$ (or $Y_n(z), z\in Z_\infty$, in the satellite case) contains infinitely many elements of $\tu{orb}(\tb{e})$. The collection of such end $\tb{\emph{w}}(z)$ is denoted by $\omega(\tb{{e}})$. 
\end{defi}
 By definition, an end $\tb{{w}}(z)\notin \omega(\tb{e})$ if and only if $P_n(z)$ or $Y_n(z)$ contains at most finitely many elements of $\tu{orb}(\tb{e})$ for any integer $n$.

\begin{lem}\label{lem:periodic}
	Let $\tb{e}$ be a wandering end. 

	\tu{(1)} If $\omega(\tb{e})$ contains a primitive end cycle $\tb{\emph{w}}_1,\ldots, \tb{\emph{w}}_p$, then there is a puzzle piece $P$ disjoint from the cycle such that
	$\#\{k: f^k(\tb{e})\subset P\}=\infty.$	
	
	\tu{(2)} If $\omega(\tb{e})$ contains a fixed satellite end $\tb{e}(z), z\in Z$, then there is a puzzle piece $P$ whose boundary is disjoint from $z$ such that 
	$\#\{k: f^k(\tb{e})\subset P\}=\infty.$
\end{lem}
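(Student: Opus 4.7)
The plan is a pigeonhole argument based on ``fresh entries'' of the forward orbit of $\tb{e}$ into the cycle's puzzle-piece neighborhoods. I treat part (1); part (2) is handled analogously, replacing the nested cycle pieces $P_n(z_i)$ by the satellite neighborhoods $Y_n(z)$ around the fixed point $z$, with the condition that $\partial P$ avoid $z$ ensuring the chosen piece stays away from the singular vertex.

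Pick $z_i\in \tb{w}_i$ with $f(z_i)=z_{i+1\bmod p}$. By Lemma~\ref{lem:ends}(2) the ends $\tb{w}_i$ are pairwise disjoint primitive ends, so for some fixed depth $n$ the pieces $P_n(z_1),\ldots,P_n(z_p)$ are pairwise disjoint. For each $i$ and each $m\geq n$, the preimage $f^{-1}(P_m(z_i))$ decomposes into finitely many depth-$(m+1)$ puzzle pieces: exactly one is $P_{m+1}(z_{i-1\bmod p})$, and each of the others contains only a noncyclic preimage of $z_i$, hence is disjoint from $\bigcup_\ell \tb{w}_\ell$. I call these the \emph{off-cycle preimage pieces} at depth $m+1$; only finitely many exist at each fixed depth.

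For each visit time $k$ with $f^k(\tb{e})\subset P_m(z_{i(k)})$, Lemma~\ref{lem:ends}(2) puts $f^{k-1}(\tb{e})$ into a unique component of $f^{-1}(P_m(z_{i(k)}))$: either (a) $P_{m+1}(z_{i(k)-1})$, an \emph{internal continuation}, or (b) an off-cycle preimage piece, a \emph{fresh entry}. I first argue fresh entries are infinite. If only finitely many occurred, each would seed a finite forward chain of internal visits bounded in length by the seed depth, and all remaining infinitely many visits (there are infinitely many by $\tb{w}_i\in\omega(\tb{e})$) would trace back through internal continuations to $\tb{e}$ itself, yielding $\tb{e}\subset P_{m+k}(z_{i(k)-k\bmod p})$ for unbounded $k$. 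Pigeon-holing on the cyclic residue $i(k)-k\bmod p$ then forces $\tb{e}\subset\bigcap_N P_N(z_j)=\tb{w}_j$ for some $j$, contradicting the assumption that $\tb{e}$ is wandering via Lemma~\ref{lem:ends}(2).

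The hard part will be collecting infinitely many of the fresh-entry orbit elements $f^{k-1}(\tb{e})$ in a single off-cycle preimage piece, since a priori the fresh entries could spread across unboundedly many depths with only finitely many at each depth. I plan to resolve this by an iterative pigeonhole: at each round, either some off-cycle preimage piece at the current depth already receives infinitely many of these orbit elements (and we are done, setting $P$ to be that piece), or almost all of them lie inside a single cycle piece $P_{m+1}(z_{j^*})$, in which case I restrict to the first-return dynamics of $f^p$ on $P_m(z_{j^*})$ and rerun the fresh-entry analysis at one level deeper. Since $\tb{e}$ is wandering and $\tb{w}_{j^*}$ is primitive, Lemma~\ref{lem:ends}(2) prevents the recursion from descending through all depths without producing the required off-cycle piece containing infinitely many orbit elements.
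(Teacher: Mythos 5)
Your approach differs from the paper's, and the final step has a genuine gap.

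The paper's argument is much more direct. Negating the conclusion means: for every depth $n$, every puzzle piece $P$ of depth $n$ that is disjoint from the cycle contains only finitely many elements of $\mathrm{orb}(\tb{e})$. Since at each fixed depth $n$ there are finitely many puzzle pieces, this forces all but finitely many orbit elements into $\mathcal{P}_n := \bigcup_i P_n(z_i)$. In particular, for a large fixed $n_0$ (chosen so that the $p$ pieces are disjoint and the degree $f:\mathcal{P}_n\to\mathcal{P}_{n-1}$ is constant for $n\geq n_0$) there is some $q$ with $\mathrm{orb}(f^q(\tb{e}))\subset\mathcal{P}_{n_0}$. The paper then observes the crucial forward-invariance identity
$$\bigcap_n\mathcal{P}_n = \{\xi\in\mathcal{P}_{n_0}: f^k(\xi)\in\mathcal{P}_{n_0}\text{ for all } k\geq 0\},$$
proved by pulling back one depth at a time, so $f^q(\tb{e})\subset\bigcap_n\mathcal{P}_n = \bigcup_i\tb{w}_i$, contradicting the wandering assumption via Lemma~\ref{lem:ends}(2). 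This sidesteps your whole fresh-entry bookkeeping.

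Your proposal correctly identifies the hard part (collecting fresh entries into a single off-cycle piece), but the proposed fix does not work as stated. If the fresh-entry depths are unbounded, your iterative pigeonhole can descend forever: at round $m$, the cofinitely-many fresh entries that have depth $> m$ are still sitting inside cycle pieces at depth $m+1$, so you keep branching into a cycle piece $P_{m+1}(z_{j^*})$ and never catch infinitely many of them together. Descending forever does not yield a contradiction by itself; the sets $S_m$ of fresh entries you retain at each round are nested but have empty intersection, and all you recover is that $\tb{w}_{j^*}\in\omega(\tb{e})$, which was your hypothesis. Your closing sentence, that Lemma~\ref{lem:ends}(2) together with wandering "prevents the recursion from descending through all depths," is not an argument; nothing in those facts blocks an infinite descent. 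What actually forces the contradiction is the forward-invariance identity above, which your proposal never formulates. To repair your proof you would need to show (as the paper does) that the orbit tail being trapped in $\mathcal{P}_{n_0}$ already forces it into $\bigcap_n\mathcal{P}_n$, and at that point you are just reproducing the paper's argument with extra scaffolding.
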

\begin{proof}
	(1) Let $\mc{P}_n$ be the union of depth-$n$ puzzle pieces that contain one of $\tb{{w}}_i$.
	If the statement is not true, only finitely many elements of $\tu{orb}(\tb{e})$ lie outside $\mc{P}_n$ for each $n$. Thus, there are integers $n_0$ and $q$ such that $\mc{P}_{n_0}$ contains all elements of $\tu{orb}(f^q(\tb{e}))$. We may assume $n_0$ is sufficiently large such that $\mc{P}_n$ is formed by $p$ disjoint depth-$n$ puzzle pieces and the degree of $f:\mc{P}_{n}\to \mc{P}_{n-1}$ is a constant for each $n\geq n_0$. We claim that
	\begin{equation}\label{eq:escape}
	\bigcap \mc{P}_n=\{\xi\in \mc{P}_{n_0}: f^k(\xi)\in \mc{P}_{n_0}\tu{ for all }k\geq 0\}=:E.
	\end{equation}
	Indeed, let $\xi\in E$ and $\xi_k=f^k(\xi)$. Fix an integer $k\geq 1$. Assume $\xi_k\in P_{n_0}\subset \mc{P}_{n_0}$. Then the preimage $(f|_{\mc{P}_{n_0}})^{-1}(P_{n_0})$ has just one component in $\mc{P}_{n_0}$, which is a puzzle piece of depth $n_0+1$ in $\mc{P}_{n_0+1}$. Since $\xi_{k-1}\in \mc{P}_{n_0}$, we have $\xi_{k-1}\in \mc{P}_{n_0+1}$. By successively processing the discussion on $\xi_{k-1},\ldots, \xi_{1}$, we conclude that $\xi\in\mc{P}_{n_0+k}$. Thus $\xi\in\bigcap \mc{P}_n$.
	
	It follows from the claim that $f^q(\tb{e})$ is contained in $\bigcap \mc{P}_n$, which is the union of $\tb{{w}}_1,\ldots,\tb{{w}}_p$. This contradicts the condition that $\tb{e}$ is wandering.
	
	(2) The proof is similar as above. In this case, let $\mc{P}_n$ be the union of depth-$n$ puzzle pieces whose boundaries contain $z$. If the statement is not true, then elements of $\tu{orb}(f^q(\tb{e}))$ are contained in $\mc{P}_{n_0}$ for some integers $n_0$ and $q$. Assume $n_0$ is large enough such that the degree of $f:P_n\to P_{n-1}$ stabilizes for each $n\geq n_0$, where $P_k$ are the depth-$k$ puzzle pieces in $\mc{P}_k$. Based on the same discussion as above, the claim \eqref{eq:escape} holds as well in this case. Therefore, $f^q(\tb{e})$ belongs to $\bigcap\mc{P}_n$, which is a subset of $\tb{e}(z)$. Once again, this contradicts the fact that $\tb{e}$ is wandering.
\end{proof}

\subsection{Triviality of wandering ends}
We shall prove that non-persistently recurrent ends have bounded degree condition; See Lemmas \ref{lem:pperiodic}, \ref{lem:non-recurrent}, \ref{lem:crit} and \ref{lem:successor}.
\begin{defi}
	Let $\tb{e}$ be a wandering end and $P$ be a puzzle piece. The \emph{first entry time} $r_{\tb{e}}(P)$ of $\tb{e}$ into $P$ is the minimal positive integer $k$ such that $f^{k}(\tb{e})\subset P$. We denote by $L_{\tb{e}}(P)$ the unique puzzle piece containing $\tb{e}$ such that $f^{r_{\tb{e}}(P)}(L_{\tb{e}}(P))=P$.
\end{defi}

We denote by $\kappa$ the number of critical points in ${J}_f$.
\begin{lem}\label{lem:first}
	Let $\tb{e}$ be a wandering end. Let $P$ be a puzzle piece such that the first entry time $r=r_{\tb{e}}(P)$ exists. Then the degree of $f^r: L_\tb{e}(P)\to P$ is bounded by $d^\kappa$.
\end{lem}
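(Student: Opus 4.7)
Set $P_0=P$ and for $1\le i\le r$ let $P_i$ be the component of $f^{-i}(P)$ containing $f^{r-i}(\tb{e})$; then $P_r=L_{\tb{e}}(P)$ and each restriction $f:P_{i+1}\to P_i$ is a proper holomorphic map between Jordan disks. Factorising the iterate along this pullback chain,
$$\deg(f^r:P_r\to P_0)=\prod_{i=0}^{r-1}\deg(f:P_{i+1}\to P_i),$$
and the Riemann-Hurwitz formula for a proper map between disks gives
$$\deg(f:P_{i+1}\to P_i)=1+\sum_{c\in\tu{Crit}(f)\cap P_{i+1}}(e_c-1)\le \prod_{c\in\tu{Crit}(f)\cap P_{i+1}}e_c\le d^{\#(\tu{Crit}(f)\cap P_{i+1})},$$
where $e_c$ denotes the local degree of $f$ at $c$. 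Thus the lemma reduces to the combinatorial estimate $\sum_{i=1}^{r}\#(\tu{Crit}(f)\cap P_i)\le\kappa$.

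First I would observe that no puzzle piece contains a critical point lying outside $J_f$. By construction, $\Gamma$ contains the equipotential $E$ around $K_f$ together with, for each bounded fixed Fatou component $U$, the curve $\partial D_N$ inside $\Gamma_U$ enclosing a disk $D_N$ that absorbs every critical point of $f$ in $U$. Each critical point of $f$ in the Fatou set thus lies in a component of $\mathbb{C}\setminus\Gamma$ disjoint from $J_f$; pulling back by $f^n$ preserves this property, so $\tu{Crit}(f)\cap P_i\subseteq\tu{Crit}(f)\cap J_f$, a set of cardinality $\kappa$.

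The crux is to prove that each $c\in\tu{Crit}(f)\cap J_f$ belongs to at most one $P_j$ with $1\le j\le r$. The first-entry hypothesis forces $P_m\cap P=\es$ for every $1\le m\le r-1$: indeed $P_m$ has depth strictly greater than that of $P$, so the nesting dichotomy from Section~2.1 gives $P_m\subset P$ or $P_m\cap P=\es$, and $P_m\subset P$ would contradict the minimality of $r$ since $f^{r-m}(\tb{e})\subset P_m$. Given this, whenever $c\in P_j$ the chain relations $f(P_k)=P_{k-1}$ propagate to $f^k(c)\in P_{j-k}$ for every $0\le k\le j$, so $f^k(c)\notin P$ for $1\le k\le j-1$ whereas $f^j(c)\in P$. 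Hence $j$ equals the first positive time the orbit of $c$ hits $P$, a number depending only on $c$.

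Combining the two observations yields $\sum_{i=1}^{r}\#(\tu{Crit}(f)\cap P_i)\le\kappa$, and the lemma follows. The part I expect to require most care is the uniqueness-of-$j$ argument at the endpoint $j=r$: when $\tb{e}\subset P$ one may have $P_r\subset P$, but then $c\in P_r$ forces $c\in P$, which is incompatible with $c\in P_i$ for any $1\le i\le r-1$ (since such $P_i$ are disjoint from $P$), so the uniqueness of $j$ is unaffected and the final bound $\deg(f^r:L_{\tb{e}}(P)\to P)\le d^{\kappa}$ goes through.
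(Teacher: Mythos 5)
Your proof is correct and uses the same pullback factorisation $\deg(f^r:L_{\tb{e}}(P)\to P)=\prod_i\deg(f:P_{i+1}\to P_i)$, but organises the combinatorial step differently from the paper. The paper sets $Q_k=f^k(L_{\tb{e}}(P))$ and proves the stronger claim that $Q_0,\dots,Q_{r-1}$ are \emph{pairwise} disjoint: an alleged intersection $Q_{r_1}\subset Q_{r_2}$ (with $r_1<r_2$) is pulled back by $f^{r_1}$ at $\tb{e}$ to a piece $W\supset\tb{e}$ with $f^{r_1+r-r_2}(W)=P$ and $0<r_1+r-r_2<r$, violating minimality of $r$. You instead prove only the weaker fact $P_m\cap P=\es$ for $1\le m\le r-1$ (an immediate consequence of the first-entry hypothesis and the nesting dichotomy) and then identify the index $j$ of any critical point $c\in P_j$ as the first-return time of $c$ to $P$, which yields uniqueness of $j$ without ever establishing pairwise disjointness of the chain. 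Both routes give $\sum_{j=1}^r\#(\mathrm{Crit}(f)\cap P_j)\le\kappa$ and hence the bound. A small bonus of your write-up is that you make explicit the observation, left implicit in the paper, that critical points lying in bounded Fatou components are absorbed by the disks $D_N$ and hence never enter a puzzle piece (and there are no escaping critical points since $J_f$ is connected), which is precisely what justifies taking $\kappa=\#(\mathrm{Crit}(f)\cap J_f)$ rather than the total number of critical points.
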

\begin{proof}
	Let $Q_k:=f^k(L_{\tb{e}}(P)), 0\leq k\leq r$. Claim that the $r$ puzzle pieces $Q_0,\cdots Q_{r-1}$ are pairwise disjoint. For otherwise, assume $Q_{r_1}\cap Q_{r_2}\neq \emptyset$ with $r_1<r_2$. Then $f^{r_1}(\tb{e})\in Q_{r_1}\subset Q_{r_2}$. We pull back $Q_{r_2}$ by $f^{r_1}$ at $\tb{e}$, and then obtain a puzzle piece $W$ containing $\tb{e}$ whose depth is strictly less than that of $Q_0$. Therefore, $f^{r_1+r-r_2}(W)=P$ with $r_1+r-r_2<r_\tb{e}(P)$. It is a contradiction. 
	
	By the claim, each critical point of $f$ appears at most once in the orbit $Q_0,\cdots, Q_{r-1}$. Thus, the degree of $f^r:L_\tb{e}(P)\to P$ is bounded by $d^\kappa$. 
\end{proof}

\begin{lem}\label{lem:pperiodic}
	Let $\tb{e}$ be a wandering end. If $\omega(\tb{e})$ contains a preperiodic end, then $\tb{e}$ satisfies the bounded degree condition.
\end{lem}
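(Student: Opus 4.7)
The plan is to reduce the hypothesis via Lemma \ref{lem:periodic} to an orbit returning to a fixed puzzle piece, and then to establish the bounded degree condition by combining the first-entry estimate of Lemma \ref{lem:first} with the conformal structure near the periodic accumulation end supplied by Lemmas \ref{lem:parabolic} and \ref{lem:enlarge}.

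First, I would verify that $\omega(\tb{e})$ is forward invariant. Since $f^m$ carries the depth-$(n+m)$ puzzle piece based at $z$ onto the depth-$n$ puzzle piece at $f^m(z)$, and the analogous inclusion $f^m(Y_{n+m}(z))\subset Y_n(f^m(z))$ holds for satellite neighborhoods, applying $f^m$ to the defining condition shows that $\tb{w}(z)\in\omega(\tb{e})$ implies $f^m(\tb{w}(z))\in\omega(\tb{e})$. Consequently a preperiodic end in $\omega(\tb{e})$ yields a periodic one, say $\tb{w}$. Under Assumptions (A1)--(A2) the points of $Z$ are fixed and Proposition \ref{prop:key} forces satellite ends at them to be singletons, so after passing to a forward iterate I may assume that $\omega(\tb{e})$ contains either a primitive periodic end cycle $\{\tb{w}_1,\dots,\tb{w}_p\}$ or a fixed satellite end $\tb{e}(z)=\{z\}$ with $z\in Z$. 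In either case Lemma \ref{lem:periodic} supplies a puzzle piece $P$, disjoint from the periodic set, and an increasing sequence of integers $m_1<m_2<\cdots\to\infty$ with $f^{m_i}(\tb{e})\subset P$.

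Next I would set $V:=P$, $n_i:=m_i$, and let $V_i\ni\tb{e}$ be the component of $f^{-n_i}(V)$, and aim for a uniform bound on $\deg(f^{n_i}:V_i\to V)$. The first entry ($i=1$) contributes a factor of at most $d^\kappa$ by Lemma \ref{lem:first}, since its chain of puzzle pieces is pairwise disjoint. For later times the orbit must travel from $P$ back to $P$ through a sojourn near $\tb{w}$, because $P$ is disjoint from the periodic accumulation set. Lemma \ref{lem:enlarge} (repelling case) or Lemma \ref{lem:parabolic} (parabolic case) provides finitely many Jordan disks $(U_j,V_j)$ into which each such sojourn is eventually mapped by a conformal iterate, so the critical contribution of the sojourn reduces to a conformal (degree one) factor. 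By composing these factors with the first-entry estimates, and exploiting the fact that only finitely many critical points of $f$ lie in $J_f$, one should obtain a uniform bound $\delta$ on $\deg(f^{n_i}:V_i\to V)$.

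The main obstacle is to prevent the multiplicative blow-up $(d^\kappa)^i$ obtained by naively iterating Lemma \ref{lem:first} at each successive return. Overcoming this requires exploiting the precise structure near the periodic end $\tb{w}\in\omega(\tb{e})$: the conformal funneling provided by Lemmas \ref{lem:parabolic} and \ref{lem:enlarge} must be used to absorb the critical content of each excursion into a degree-one block, so that only a bounded amount of critical orbit is counted across all of the $i$ returns. Carefully organizing this accounting, and showing that the finitely many critical points in $J_f$ meet the pullback chain with multiplicity bounded independently of $i$, is the technical heart of the argument; once it is in place, Lemma \ref{lem:singleton} applies to conclude that $\tb{e}$ is trivial as soon as bounded degree is known.
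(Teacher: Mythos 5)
Your reduction to Lemma \ref{lem:periodic} via the forward invariance of $\omega(\tb{e})$ is correct and matches the paper. (One peripheral inaccuracy: you write that Proposition \ref{prop:key} forces the fixed satellite end $\tb{e}(z)$ with $z\in Z$ to be a singleton, but that proposition requires \emph{each external ray} at $z$ to be fixed; when $z$ is a repelling fixed point whose rays form a cycle of period $p\geq 2$, the satellite end can be non-trivial. This is exactly why the thickened-puzzle machinery exists. Lemma \ref{lem:periodic}\,(2) still applies, so your reduction survives, but the $\tb{e}(z)=\{z\}$ claim is false in general.)

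The real gap is in the second half. You set $V:=P$, choose the sequence $n_i:=m_i$ of \emph{all} visit times of $\tb{e}$ to $P$, and then try to show that $\deg(f^{n_i}:V_i\to V)$ is uniformly bounded. You correctly identify that naively iterating Lemma \ref{lem:first} at each successive return gives $(d^\kappa)^i$, which blows up. But the proposed fix---using Lemmas \ref{lem:parabolic} and \ref{lem:enlarge} to make the sojourn near $\tb{w}$ conformal---does not close this gap, and is not what the paper does. Those two lemmas are invoked only in the proof of Lemma \ref{lem:singleton} (to deal with a satellite point on $\partial V$), not here. Even with conformal funneling near $\tb{w}$, the excursions \emph{away} from the neighborhood of $\tb{w}$ and back again can each pick up new critical encounters, so the degree of $f^{m_i}$ along the full orbit genuinely can grow with $i$.

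The paper avoids this by not insisting on the sequence of all return times. For each depth $n$ it constructs a single bounded-degree map consisting of exactly \emph{two} first-entry maps: the first entry $f^{r_n}:L_{\tb{e}}(P_n)\to P_n$ of $\tb{e}$ into the depth-$n$ puzzle piece $P_n$ containing $\tb{w}$, followed by the first entry $f^{s_n}:Q_n\to U$ of $f^{r_n}(\tb{e})$ into the fixed piece $U$ supplied by Lemma \ref{lem:periodic}. Each factor has degree $\leq d^\kappa$ by Lemma \ref{lem:first}, so the composition $f^{r_n+s_n}:Q_n'\to U$ has degree $\leq d^{2\kappa}$. The crucial observations are (i) $Q_n\subset P_n$ (the reverse inclusion is ruled out because $U$ is disjoint from the orbit of $\tb{w}$, resp.\ $z\notin\ol U$ in the satellite case), which makes the composition well defined as a bounded-degree pull-back to $U$; and (ii) $r_n\to\infty$ because $\tb{w}\notin\mathrm{orb}(\tb{e})$, which forces $r_n+s_n\to\infty$. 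This produces the required sequence of times $n_k$ with fixed target $V=U$ and uniform bound $\delta=d^{2\kappa}$. So the key idea you are missing is the freedom to \emph{choose} the sequence of times: by routing through ever-deeper pieces $P_n$ around $\tb{w}$, one can make each pull-back a composition of just two first-entry maps, rather than trying to control the degree along every return to a fixed $P$.
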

\begin{proof}
	Since $f(\omega(\tb{e}))\subset \omega(\tb{e})$ by definition, we may assume $\tb{{w}}\in \omega(\tb{e})$ is periodic. We first consider the case that $\tb{{w}}$ is primitive. By Lemma \ref{lem:periodic} (1), there is a puzzle piece $U$, disjoint from elements of $\tu{orb}(\tb{{w}})$, such that $\#\{k: f^k(\tb{e})\subset U\}=\infty$.
	
	Let $P_n$ be the depth-$n$ puzzle piece containing $\tb{{w}}$. Then $P_n$ contains at least an element of $\tu{orb}(\tb{e})$. Let $r_n$ be the first entry time of $\tb{e}$ into $P_n$. Since $\tb{{w}}\notin \tu{orb}(\tb{e})$, it implies that $r_n\to\infty$ as $n\to\infty$. The end $f^{r_n}(\tb{e})$ will eventually run into the puzzle piece $U$. The first entry time of $f^{r_n}(\tb{e})$ into $U$ is denoted by $s_n$. Let $Q_n=L_{f^{r_n}(\tb{e})}(U)$. Since $f^{r_n}(\tb{e})\subset Q_n\cap P_n$, we have $Q_n\subset P_n$ or $P_n\subset Q_n$. The latter case cannot happen, for otherwise $f^{s_n}(\tb{{w}})\subset f^{s_n}(P_n)\subset f^{s_n}(Q_n)=U$, contradicting the assumption on $U$. By Lemma \ref{lem:first}, both maps 
	\begin{equation}\label{eq:composition}
	f^{r_n}:L_{\tb{e}}(P_n)\to P_n\tu{ and }f^{s_n}:Q_n\to U
	\end{equation}
 	have bounded degree $d^\kappa$. Their composition $f^{r_n+s_n}: Q_n'\to U$ has degree less than $d^{2\kappa}$, where $Q_n'\subset L_{\tb{e}}(P_n)$ is the component of $f^{-r_n}(Q_n)$ containing $\tb{e}$. 
	
	In the case that $\tb{{w}}$ is satellite, we assume $\tb{{w}}=\tb{e}(z), z\in Z$. Let $U$ be the puzzle piece in Lemma \ref{lem:periodic}\,(2) satisfying that $z\notin \ol{U}$ and $\#\{k:f^{k}(\tb{e})\subset U\}=\infty$. For any integer $n$, there is a depth-$n$ puzzle piece $P_n\subset Y_n(z)$ containing $f^{r_n}(\tb{e})$ with $r_n$ the first entry time of $\tb{e}$ into $P_n$. It is clear that $r_n\to\infty$ as $n\to\infty$. By the same process as above, we obtain an integer $s_n$ and a puzzle piece $Q_n$ that contains $f^{r_n}(\tb{e})$. Then either $Q_n\subset P_n$ or $P_n\subset Q_n$. Again, the latter case implies that $z=f^{s_n}(z)\in f^{s_n}(\ol{P_n})\subset f^{s_n}(\ol{Q_n})=\ol{U}$, impossible. Therefore, by substituting $P_n$ with $Q_n$, the composition of the two maps in \eqref{eq:composition} has bounded degree $d^{2\kappa}$ as well. The proof is complete.  
\end{proof}

\begin{lem}\label{lem:non-recurrent}
	Let $\tb{e}$ be a wandering end. If there is a wandering end $\tb{\emph{w}}\in\omega(\tb{e})$ such that $\omega(\tb{\emph{w}})\neq \omega(\tb{e})$, then $\tb{e}$ satisfies the bounded degree condition.
\end{lem}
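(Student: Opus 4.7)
The plan is to adapt the strategy of Lemma \ref{lem:pperiodic} to the wandering setting by exploiting the strict gap $\omega(\tb{w}) \subsetneq \omega(\tb{e})$. First, I would record the nesting $\omega(\tb{w}) \subseteq \omega(\tb{e})$: given $\tb{u} \in \omega(\tb{w})$ and a puzzle piece $P$ containing $\tb{u}$, choose an increasing sequence $k_i$ with $f^{k_i}(\tb{w}) \subset P$, let $P_i$ be the component of $f^{-k_i}(P)$ that contains $\tb{w}$, and use $\tb{w} \in \omega(\tb{e})$ to pick $m_i$ with $f^{m_i}(\tb{e}) \subset P_i$ and $k_i + m_i$ strictly increasing. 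Then $f^{k_i+m_i}(\tb{e}) \subset P$, so $\tb{u} \in \omega(\tb{e})$. The hypothesis therefore yields some $\tb{w}' \in \omega(\tb{e}) \setminus \omega(\tb{w})$, and from the definition of $\omega$ I can pick a puzzle piece $U$ containing $\tb{w}'$ with $\#\{k : f^k(\tb{e}) \subset U\} = \infty$ but $\#\{k : f^k(\tb{w}) \subset U\} < \infty$. Since $\tb{w}'$ and $\tb{w}$ have distinct base points, by replacing $U$ with a puzzle piece of sufficiently large depth I can further arrange that, if $\tb{w}$ is satellite based at a fixed point $z$, then $z \notin \ol{U}$.

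Next, for each large $n$ I would select a depth-$n$ puzzle piece $P_n$ with $\tb{w} \subset \ol{P_n}$ receiving infinitely many $f^k(\tb{e})$. In the primitive case, $P_n = P_n(\tb{w})$. In the satellite case $\tb{w} = \tb{e}(z)$, the pieces comprising $Y_n(z)$ fall into finitely many ``directions'' at $z$, and the subset of directions receiving infinitely many $f^k(\tb{e})$ is non-empty and decreasing in $n$; hence it stabilizes and I take $P_n$ along any stabilized direction. Since $\bigcap_n P_n \subset \tb{w}$ and $\tb{w} \notin \tu{orb}(\tb{e})$ (otherwise $\omega(\tb{w}) = \omega(\tb{e})$, contradicting the hypothesis), the first entry times $r_n := r_\tb{e}(P_n)$ tend to $\infty$, and Lemma \ref{lem:first} yields $\tu{deg}(f^{r_n} : L_\tb{e}(P_n) \to P_n) \leq d^\kappa$.

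Let $s_n$ denote the first entry time of $f^{r_n}(\tb{e})$ into $U$ and set $Q_n := L_{f^{r_n}(\tb{e})}(U)$; both $P_n$ and $Q_n$ contain $f^{r_n}(\tb{e})$, so they are nested. I then claim $Q_n \subset P_n$ for all sufficiently large $n$. Otherwise $P_n \subset Q_n$: in the primitive case this gives $\tb{w} \subset Q_n$, so $f^{s_n}(\tb{w}) \subset U$ pins $s_n$ in the finite set $\{k : f^k(\tb{w}) \subset U\}$; combined with $n = \tu{depth}(P_n) \leq \tu{depth}(Q_n) = \tu{depth}(U) + s_n$ this bounds $n$. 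In the satellite case, $z \in \partial P_n \subset \ol{Q_n}$ would force $z = f^{s_n}(z) \in \ol{U}$, contradicting the choice of $U$. Applying Lemma \ref{lem:first} to the wandering end $f^{r_n}(\tb{e})$ now gives $\tu{deg}(f^{s_n} : Q_n \to U) \leq d^\kappa$, so the composition $f^{r_n + s_n} : Q_n' \to U$, where $Q_n' \subset L_\tb{e}(P_n)$ is the component of $f^{-r_n}(Q_n)$ containing $\tb{e}$, has degree at most $d^{2\kappa}$; since $r_n + s_n \to \infty$, this is exactly the bounded degree condition with target $U$.

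The hardest part I anticipate is the satellite bookkeeping for $\tb{w}$: verifying the stabilization of directions at $z$ so as to produce a coherent nested sequence $\{P_n\}$, arranging $z \notin \ol{U}$ to keep the nesting obstruction available, and confirming $r_n \to \infty$ along the chosen direction. Once these combinatorial points are set up, the degree count and the conclusion mirror Lemma \ref{lem:pperiodic} almost verbatim.
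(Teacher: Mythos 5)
Your overall architecture matches the paper's proof, but there are two concrete problems.

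First, the paper begins by invoking Lemma~\ref{lem:pperiodic} to reduce to the case where every end in $\omega(\tb{e})$ is wandering, hence primitive, so that $\tb{w}$, $\tb{w}'$ and all elements of $\tu{orb}(\tb{w})$ live in a single nested sequence of puzzle pieces each. You skip this reduction, and as a result spend effort on a ``satellite case for $\tb{w}$'' that cannot occur: $\tb{w}$ is wandering by hypothesis, and wandering ends are primitive (Section 3.2), so the whole discussion of directions at a base point $z$ of $\tb{w}$, and the later appeal to $f^{s_n}(z)=z$, is vacuous (and the latter identity would not even hold in general, since base points of satellite ends lie in $Z_\infty$ and need not be fixed).

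Second, and more seriously, your choice of $U$ is too weak. You only require $\#\{k: f^k(\tb{w})\subset U\}<\infty$, whereas the paper arranges $U$ to be \emph{disjoint} from every element of $\tu{orb}(\tb{w})$. Your attempt to close the gap via the depth estimate ``$n=\tu{depth}(P_n)\leq\tu{depth}(Q_n)=\tu{depth}(U)+s_n$, which bounds $n$'' is wrong in two ways. The inequality is backwards: $P_n\subset Q_n$ forces $\tu{depth}(P_n)\geq\tu{depth}(Q_n)$, i.e.\ $n\geq\tu{depth}(U)+s_n$. And even with the sign corrected, boundedness of $s_n$ only bounds $\tu{depth}(Q_n)$ from above; it gives no upper bound on $n$, so $P_n\subset Q_n$ could persist for all large $n$. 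In that scenario $Q_n'\supset L_{\tb{e}}(P_n)$ rather than the other way around, $r_n$ is no longer the first entry time of $\tb{e}$ into $Q_n$, and Lemma~\ref{lem:first} no longer controls the degree of $f^{r_n}:Q_n'\to Q_n$, so the $d^{2\kappa}$ bound fails. The fix is what the paper does: since $\tb{w}'\notin\omega(\tb{w})$, some $P_{n_0}(\tb{w}')$ meets only finitely many $f^k(\tb{w})$; each of these is a primitive end distinct from $\tb{w}'$, so by Lemma~\ref{lem:ends}(2) it lies outside $P_{n}(\tb{w}')$ for $n$ large enough, and one can take $U=P_n(\tb{w}')$ with $U$ genuinely disjoint from $\tu{orb}(\tb{w})$. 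Then $P_n\subset Q_n$ is impossible outright (it would force $f^{s_n}(\tb{w})\subset U$), the nesting $Q_n\subset P_n$ holds for every $n$ under consideration, and the degree count you describe goes through.
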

\begin{proof}
	By Lemma \ref{lem:pperiodic} we may assume ends in $\omega(\tb{e})$ are wandering. Since wandering ends are primitive, it follows from Lemma \ref{lem:ends} (2) and the definition that $\omega(\tb{{w}})\subset \omega(\tb{e})$.
	
	 We take a wandering end $\tb{{w}}'\in\omega(\tb{e})\setminus\omega(\tb{{w}})$. This implies that there is a puzzle piece $U$, containing $\tb{{w}}'$, of sufficiently high depth, such that
	 $U$ is disjoint from elements in $\tu{orb}(\tb{{w}})$ but contains infinitely many elements of $\tu{orb}(\tb{e})$.
	 
	 For any depth-$n$ puzzle piece $P_n$ containing $\tb{{w}}$, let $r_n$ be the first entry time of $\tb{e}$ into $P_n$. Since $\omega(\tb{e})=\omega(f^k(\tb{e}))$ for any $k\geq 1$, we have $\tb{{w}}\notin \tu{orb}(\tb{e})$. Thus $r_n\to\infty$ as $n\to\infty$.
	  It follows from the choice of $U$ that $f^{r_n}(\tb{e})$ eventually runs into $U$; the first entry time $s_n$ exists.

	  Let $Q_n=L_{f^{r_n}(\tb{e})}(U)$. Since $f^{r_n}(\tb{e})\subset Q_n\cap P_n$, either $Q_n\subset P_n$ or $P_n\subset Q_n$. The latter case implies that $\tb{{w}}\subset Q_n$, and then $f^{s_n}(\tb{{w}})\subset f^{s_n}(Q_n)=U$, a contradiction. 
	  
	 Let $Q_n'\subset L_{\tb{e}}(P_n)$ be the component of $f^{-r_n}(Q_n)$ containing $\tb{e}$. By Lemma \ref{lem:first}, the composition of the two maps $f^{r_n}:Q_n'\to Q_n\subset P_n$ and $f^{s_n}:Q_n\to U$ has bounded degree $d^{2\kappa}$. The proof is complete.
\end{proof}

An end is called \emph{critical} if it contains a critical point of $f$. The collection of critical ends in $\omega(\tb{e})$ is denoted by $\omega\tu{Crit}(\tb{e})$.

\begin{lem}\label{lem:crit}
	Let $\tb{e}$ be a wandering end. If $\omega\tu{Crit}(\tb{e})=\emptyset$, then 
	$\tb{e}$ satisfies the bounded degree condition. 
\end{lem}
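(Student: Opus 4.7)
The plan is to produce a single depth-$N$ puzzle piece $V$ that is free of critical points, is visited infinitely often by $\tu{orb}(\tb{e})$, and whose pullbacks along the corresponding return times of $\tb{e}$ have uniformly bounded degree. By Lemma \ref{lem:pperiodic} I may first assume every end in $\omega(\tb{e})$ is wandering, hence primitive, so that a base point $\tb{w}$ for $V$ can be chosen inside a single puzzle piece rather than inside a union $Y_n$.

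I will choose $N$ so that the orbit of $\tb{e}$ eventually avoids all critical puzzle pieces of depth $N$. For every critical $c\in J_f$ the hypothesis $\omega\tu{Crit}(\tb{e})=\es$ gives $\tb{e}(c)\notin\omega(\tb{e})$, so by the definition of combinatorial accumulation ends there is an integer $n_c$ for which $P_{n_c}(c)$ (or $Y_{n_c}(c)$ when $\tb{e}(c)$ is satellite) meets $\tu{orb}(\tb{e})$ in only finitely many ends. Taking $N\geq\max_c n_c$ and letting $U_c$ denote the corresponding depth-$N$ piece or union, there is an integer $k_0$ with $f^k(\tb{e})\not\subset U_c$ for every $k\geq k_0$ and every critical $c\in J_f$. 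I then pick any $\tb{w}\in\omega(\tb{e})$; since $\tb{w}$ contains no critical point in $J_f$, after enlarging $N$ the piece $V:=P_N(w)$ contains no critical point in $J_f$ either, and by the definition of $\omega(\tb{e})$ there is a sequence $n_k\to\infty$ with $f^{n_k}(\tb{e})\subset V$. Let $V_k$ be the depth-$(N+n_k)$ puzzle piece containing $\tb{e}$ with $f^{n_k}(V_k)=V$.

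The degree estimate is then immediate. For $0\leq j\leq n_k-1$ the piece $f^j(V_k)$ has depth $\geq N+1$ and contains $f^j(\tb{e})$, so $f^j(V_k)\subset P_N(f^j(\tb{e}))$. For $j\geq k_0$ the depth-$N$ piece $P_N(f^j(\tb{e}))$ is none of the $U_c$; recalling that critical points of $f$ in bounded Fatou components lie in the regions $D_N$ used to build $\Gamma$ and hence in no puzzle piece, that critical points in $J_f\cap Z_\infty$ occur only on boundaries of puzzle pieces, and that critical points in $J_f\setminus Z_\infty$ lie in the interior of a unique depth-$N$ piece $U_c=P_N(c)$, it follows that $P_N(f^j(\tb{e}))$, and a fortiori $f^j(V_k)$, contains no critical point of $f$ in its interior. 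Therefore $\deg(f\colon f^j(V_k)\to f^{j+1}(V_k))=1$ for $j\geq k_0$, while the remaining at most $k_0$ factors are each bounded by $d$, which gives $\deg(f^{n_k}\colon V_k\to V)\leq d^{k_0}$. This is exactly the bounded degree condition with $\delta=d^{k_0}$. The main subtlety in writing this up is the case analysis for critical points according to whether they lie in the Fatou set, in $Z_\infty$, or in $J_f\setminus Z_\infty$, and checking that in each case they contribute nothing to the local degree on the open puzzle pieces encountered along the orbit.
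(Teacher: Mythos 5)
Your proposal follows essentially the same route as the paper: for each critical end, use $\omega\tu{Crit}(\tb{e})=\es$ to find a depth past which the surrounding puzzle piece (or union $Y_n$) misses almost all of $\tu{orb}(\tb{e})$, set $N$ to the max of those depths, take a depth-$N$ piece visited infinitely often, and bound the degree of the pullbacks by showing the intermediate pieces avoid critical points. The only structural difference is cosmetic: the paper first replaces $\tb{e}$ by a forward iterate so that $\tu{orb}(\tb{e})$ contains no critical ends, at which point the depth-$n$ pieces around critical ends are eventually \emph{disjoint} from $\tu{orb}(\tb{e})$ and all pullbacks are conformal, whereas you keep the whole orbit and absorb the finitely many early visits into a cutoff $k_0$, ending with degree $\le d^{k_0}$. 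Both yield the bounded degree condition.

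One point in your final case analysis is imprecise and should be fixed. The claim that ``critical points in $J_f\cap Z_\infty$ occur only on boundaries of puzzle pieces'' is not true for pieces of small depth: a critical $c\in f^{-m}(Z)\setminus f^{-(m-1)}(Z)$ lies on boundaries of depth-$n$ pieces only for $n\ge m$, and nothing in your choice $N\ge\max_c n_c$ guarantees $N\ge m$. As stated, the argument for why $f^j(V_k)$ has no critical point in this case is incomplete. The correct reasoning is the one you already use for $c\in J_f\setminus Z_\infty$: if $c$ lies in the open piece $P_N(f^j(\tb{e}))$, then the closure of that piece is one of the pieces making up $Y_N(c)$, so $P_N(f^j(\tb{e}))\subset U_c$ and hence $f^j(\tb{e})\subset U_c$, contradicting $j\ge k_0$. (Relatedly, a critical $c\in J_f\setminus Z_\infty$ can lie inside a \emph{satellite} end $\tb{e}(z')$ with $z'\in Z_\infty$, $z'\ne c$; then the end containing $c$ is satellite even though $c\notin Z_\infty$, and the relevant object is $Y_{n_c}(z')$, not an undefined $Y_{n_c}(c)$ --- but since $\ol{P_N(c)}\subset Y_N(z')$, taking $U_c=P_N(c)$ still works.) Once these two clarifications are made, your proof is correct.
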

\begin{proof}
	We may assume $\tu{orb}(\tb{e})$ contains no critical ends. Let $\tb{{w}}_1,\ldots \tb{{w}}_m$ be the critical ends of $f$. If $\tb{{w}}_i$ is primitive, there is an integer $n_i$ such that the depth-$n$ puzzle piece $P_n$ containing $\tb{{w}}_i$ is disjoint from elements of $\tu{orb}(\tb{e})$ for all $n\geq n_i$. If $\tb{{w}}_i=\tb{e}(z_i)$ is satellite with $z_i\in Z_\infty$, then $Y_{n}(z_i)$ is disjoint from elements of $\tu{orb}(\tb{e})$ for all $n\geq n_i$. 
	
	Let $N=\tu{max}\{n_i: 1\leq i\leq m\}$. Take a puzzle piece $U$ of depth $N$ such that $f^{n_k}(\tb{e})\subset U$ with $n_k\to\infty$ as $k\to\infty$. Then the map $f^{n_k}:Q_{k}\to U$ is conformal, where $Q_k$ is the component of $f^{-n_k}(U)$ containing $\tb{e}$. Indeed, $f^{i}(Q_k)$ is a puzzle piece of depth $N+n_i-i$ containing $f^i(\tb{e})$ for $0\leq i<n_i$; thus it is disjoint from critical ends by the choice of $N$. Therefore, $\tb{e}$ satisfies the bounded degree condition. 
\end{proof}

\begin{defi}
	Let $\tb{e}$ be a wandering critical end. Let $P_n$ be the depth-$n$ puzzle piece containing $\tb{e}$. Then $P_{n+k}$ is called a \emph{successor} of $P_n$ if $f^k(P_{n+k})=P_n$ and the itinerary $P_{n+k}, f(P_{n+k}),\ldots, f^k(P_{n+k})=P_n$
	meets each critical end of $f$ at most twice. 
\end{defi}

 By definition, the degree of  $f^k:P_{n+k}\to P_n$ is bounded by $d^{2\kappa-1}$.

\begin{lem}\label{lem:successor}
	Let $\tb{e}$ be a wandering end. If there is a critical end $\tb{{w}}\in\omega\tu{Crit}(\tb{e})$ and an integer $n_0$ such that $P_{n_0}$, containing $\tb{{w}}$, has infinitely many successors, then $\tb{e}$ satisfies the bounded degree condition.
\end{lem}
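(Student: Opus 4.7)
The plan is to produce the bounded-degree data directly by concatenating two types of maps that are already bounded-degree on the nose: the first-entry maps into deep puzzle pieces about $\tb{w}$ (of degree $\leq d^\kappa$ by Lemma \ref{lem:first}) and the successor maps furnished by the hypothesis (of degree $\leq d^{2\kappa-1}$ by the definition of a successor). The target puzzle piece in the bounded degree condition will be $V=P_{n_0}$.

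By Lemma \ref{lem:pperiodic} I may assume $\omega(\tb{e})$ contains no preperiodic end, so that $\tb{w}\in\omega\tu{Crit}(\tb{e})$ is wandering; since wandering ends are primitive, the nested depth-$n$ puzzle pieces $P_n$ containing $\tb{w}$ are unambiguously defined. The hypothesis produces an infinite family of successors $\{P_{n_0+k_j}\}_{j\geq 1}$ of $P_{n_0}$, necessarily with $k_j\to\infty$, and by the definition of a successor each
\[
f^{k_j}\colon P_{n_0+k_j}\longrightarrow P_{n_0}
\]
has degree at most $d^{2\kappa-1}$.

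Because $\tb{w}\in\omega(\tb{e})$, each $P_{n_0+k_j}$ meets $\tu{orb}(\tb{e})$ in infinitely many ends, so the first entry times $r_j:=r_{\tb{e}}(P_{n_0+k_j})$ exist and Lemma \ref{lem:first} gives
\[
\deg\bigl(f^{r_j}\colon L_{\tb{e}}(P_{n_0+k_j})\to P_{n_0+k_j}\bigr)\leq d^\kappa.
\]
Setting $n_j:=r_j+k_j$ and composing yields
\[
f^{n_j}\colon L_{\tb{e}}(P_{n_0+k_j})\longrightarrow P_{n_0}
\]
of degree at most $d^{3\kappa-1}$. Since $L_{\tb{e}}(P_{n_0+k_j})$ is a puzzle piece of depth $n_0+n_j$ containing $\tb{e}$, the puzzle piece trichotomy identifies it with the component of $f^{-n_j}(P_{n_0})$ through $\tb{e}$. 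Because $n_j\geq k_j\to\infty$ automatically, this is exactly the data demanded by the bounded degree condition with $\delta=d^{3\kappa-1}$.

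The step that needs the most care is the depth-matching in the previous paragraph: one should verify that pulling $P_{n_0}$ back along $f^{n_j}$ at $\tb{e}$ does not enlarge the domain beyond $L_{\tb{e}}(P_{n_0+k_j})$. This is forced by the fact that two puzzle pieces of equal depth are either equal or disjoint, but it is the only bookkeeping point where the proof could go wrong. In particular, no separate argument that $r_j\to\infty$ is required; the successor indices $k_j$ alone drive $n_j$ to infinity, after which the argument is a direct composition of two quantitative lemmas already in hand.
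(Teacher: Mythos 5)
Your proposal is correct and follows essentially the same approach as the paper: compose the successor map $f^{k_j}\colon P_{n_0+k_j}\to P_{n_0}$ (degree $\leq d^{2\kappa-1}$ by definition of successor) with the first-entry map $f^{r_j}\colon L_{\tb{e}}(P_{n_0+k_j})\to P_{n_0+k_j}$ (degree $\leq d^\kappa$ by Lemma~\ref{lem:first}) to obtain bounded degree $d^{3\kappa-1}$ with target $V=P_{n_0}$. Your extra bookkeeping (existence of first-entry times from $\tb{w}\in\omega(\tb{e})$, and the depth-matching identification of $L_{\tb{e}}(P_{n_0+k_j})$ with the component of $f^{-n_j}(P_{n_0})$ through $\tb{e}$) is correct and merely makes explicit what the paper leaves implicit.
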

\begin{proof}
	Let $P_{n_0+n_k}$ be the successors of $P_{n_0}$ with $n_k\to\infty$ as $k\to\infty$. By definition, the degrees of maps
	$f^{n_k}:P_{n_0+n_k}\to P_{n_0}$
	are bounded by $d^{2\kappa-1}$. Let $r_k$ be the first entry time of $\tb{e}$ into $P_{n_0+n_k}$ and $V_k$ be the component of $f^{-r_k}(P_{n_0+n_k})$ containing $\tb{e}$. Then the composition $f^{n_k+r_k}: V_k\to P_{n_0}$ has degree less than $d^{3\kappa-1}$. The proof is complete.
\end{proof}

A wandering end $\tb{e}$ is called \emph{persistently recurrent} if it satisfies that

(1) $\omega\tu{Crit}(\tb{e})\neq\es$;
 
(2) any end $\tb{{w}}\in\omega(\tb{e})$ is wandering with $\omega(\tb{{w}})=\omega(\tb{e})$;

(3) any depth-$n$ puzzle piece $P_n$, containing a critical end of $\omega\tu{Crit}(\tb{e})$, has only finitely many successors.

The collection of persistently recurrent ends is denoted by $\mc{E}$. Combining Lemmas \ref{lem:singleton}, \ref{lem:pperiodic}, \ref{lem:non-recurrent}, \ref{lem:crit} and \ref{lem:successor}, we have the following corollary.
\begin{cor}\label{cor:non-recurrent}
	Any wandering end not contained in $\mc{E}$ is trivial.
\end{cor}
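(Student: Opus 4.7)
The plan is a direct case analysis on which of the three defining clauses of $\mc{E}$ fails for the given wandering end $\tb{e}$. Since $\tb{e}$ is wandering but $\tb{e}\notin\mc{E}$, at least one of the following must hold: (a) $\omega\tu{Crit}(\tb{e})=\es$; (b) some $\tb{w}\in\omega(\tb{e})$ is preperiodic or is wandering with $\omega(\tb{w})\neq\omega(\tb{e})$; (c) there is a critical end $\tb{w}\in\omega\tu{Crit}(\tb{e})$ whose containing depth-$n_0$ puzzle piece has infinitely many successors. The strategy is to feed each case into the appropriate earlier lemma to conclude that $\tb{e}$ satisfies the bounded degree condition, and then to invoke Lemma \ref{lem:singleton} to conclude triviality.

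More concretely, in case (a), Lemma \ref{lem:crit} directly gives the bounded degree condition. In case (b), if the offending $\tb{w}$ is preperiodic, Lemma \ref{lem:pperiodic} applies; if $\tb{w}$ is wandering but $\omega(\tb{w})\neq\omega(\tb{e})$, then Lemma \ref{lem:non-recurrent} applies. In case (c), Lemma \ref{lem:successor} applies. Each of these lemmas furnishes a fixed puzzle piece $U$ and integers $N_k\to\infty$ together with pullback components $V_k\ni\tb{e}$ of $f^{-N_k}(U)$ such that the restrictions $f^{N_k}:V_k\to U$ have degrees uniformly bounded by a power of $d^\kappa$. This is precisely the bounded degree condition from the definition preceding Lemma \ref{lem:singleton}.

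Once the bounded degree condition is established in every case, Lemma \ref{lem:singleton} immediately yields that $\tb{e}$ is a singleton, i.e.\ trivial. The only thing to watch for is that the cases above genuinely exhaust the negation of the three clauses (1)--(3); one should verify this logically at the start, noting in particular that clause (2) has two sub-negations (a member of $\omega(\tb{e})$ is not wandering, versus is wandering with different $\omega$-limit), each of which is covered by a separate lemma.

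There is no essential obstacle here: all the analytic work has been absorbed into the preceding lemmas, so the proof is a short logical bookkeeping exercise. The only subtle point worth articulating carefully is that conditions (1)--(3) on $\mc{E}$ are a conjunction, so failure of $\tb{e}\in\mc{E}$ means failure of at least one clause, and one must check that each individual failure mode matches the hypothesis of exactly one of Lemmas \ref{lem:pperiodic}, \ref{lem:non-recurrent}, \ref{lem:crit}, \ref{lem:successor}.
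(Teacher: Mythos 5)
Your proof matches the paper's approach exactly: the paper's own justification for Corollary \ref{cor:non-recurrent} is precisely "Combining Lemmas \ref{lem:singleton}, \ref{lem:pperiodic}, \ref{lem:non-recurrent}, \ref{lem:crit} and \ref{lem:successor}," i.e.\ the same case split on which clause of the definition of $\mc{E}$ fails, feeding each case to the matching lemma to get the bounded degree condition, then invoking Lemma \ref{lem:singleton}. Your explicit bookkeeping of the negation of the three clauses (and the two sub-cases of clause (2)) is a faithful unpacking of what the paper leaves implicit.
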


We do not know whether a persistently recurrent end $\tb{e}$ satisfies the bound degree condition. However, there is a sequence of principal nested puzzle pieces $P_{n_k}$ containing $\tb{e}$ such that $$\tu{mod}(P_{n_{k+1}}\sm \ol{P_{n_k}})\geq \mu,$$ for all $k$ and a positive constant $\mu$. The construction of the principal nest is attributed to Kahn and Lyubich \cite{KL2} in the unicritical case and to Kozlovski-Shen-van Strien \cite{KSS} in the multicritical case. The complex bounds are proven by Kahn-Lyubich \cite{KL1,KL2} for the unicritical case and by Kozlovski-van Strien \cite{KS} and Qiu-Yin \cite{QY} independently for the multicritical case. Interested readers can refer to these references as well as \cite{CDKS} for details. Consequently, we have

\begin{thm}\label{thm:recurrent}
	Any end in $\mc{E}$ is trivial.
\end{thm}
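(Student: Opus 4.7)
The plan is to deduce triviality of $\tb{e}\in\mc{E}$ from the classical principal-nest / complex-bounds machinery. By condition (1), pick any $\tb{c}\in\omega\tu{Crit}(\tb{e})$. Condition (2) forces $\omega(\tb{c})=\omega(\tb{e})$, so $\tb{c}$ is itself wandering (hence primitive), and every puzzle piece around $\tb{c}$ is visited infinitely often by $\tu{orb}(\tb{e})$. Condition (3), the finiteness of successors, is exactly the combinatorial non-degeneracy hypothesis under which the principal nest around a recurrent wandering critical end can be constructed.

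Starting from a deep puzzle piece $W_0$ around $\tb{c}$, build the principal nest $W_0\Supset W_1\Supset W_2\Supset\cdots$ of generalised first-return / first-landing pull-backs, following Kahn--Lyubich in the unicritical case and Kozlovski--Shen--van Strien in the multicritical case. The complex bounds of Kahn--Lyubich \cite{KL1,KL2} and of Kozlovski--van Strien \cite{KS} and Qiu--Yin \cite{QY} then yield a constant $\mu>0$, depending only on $f$, with $\tu{mod}(W_k\sm\ol{W_{k+1}})\geq\mu$ for all sufficiently large $k$. For each $k$, let $s_k$ be the first entry time of $\tb{e}$ into $W_k$ and $V_k=L_\tb{e}(W_k)$ the corresponding first-entry puzzle piece around $\tb{e}$; by Lemma \ref{lem:first}, $\tu{deg}(f^{s_k}\from V_k\to W_k)\leq d^{\kappa}$. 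Since the depths of the $V_k$ strictly increase, the pieces nest as $V_0\supset V_1\supset V_2\supset\cdots\supset\tb{e}$, and pulling back the principal-nest annuli through the bounded-degree maps $f^{s_k}$ gives $\tu{mod}(V_k\sm\ol{V_{k+1}})\geq\mu/d^{\kappa}$ for all large $k$; alternatively, one directly pulls back a deep annulus $W_{k_0}\sm\ol{W_K}$ via $f^{s_K}$ and invokes Lemma \ref{2:distortionlemma} together with Lemma \ref{lem:shrink}.

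By the Gr\"otzsch inequality, $\tu{mod}(V_0\sm\ol{V_K})\geq K\mu/d^{\kappa}\to\infty$. Since $\partial V_0$ is a fixed Jordan curve of bounded Euclidean diameter, the standard comparison between conformal modulus and Euclidean diameter gives $\tu{diam}(\ol{V_K})\to 0$. Because $\tb{e}\subset\bigcap_K\ol{V_K}$, the end $\tb{e}$ is a singleton. The serious obstacle is the complex bounds invoked above: translating the polynomial-like / box-mapping framework of \cite{KL2,KS,QY} into the puzzle language used here requires verifying that the persistent-recurrence conditions (1)--(3), together with Lemma \ref{lem:periodic} (which, combined with condition (2), rules out periodic ends from appearing in $\omega(\tb{e})$), provide the standard combinatorial non-degeneracy data needed both to define an infinite principal nest and to apply the complex-bounds theorem. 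Once this translation is in place, the geometric part of the argument is the short modulus estimate above.
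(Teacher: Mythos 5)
Your proposal follows the same route the paper takes: both ultimately defer to the complex-bounds literature (Kahn--Lyubich, Kozlovski--Shen--van Strien, Kozlovski--van Strien, Qiu--Yin) for a principal nest with definite moduli, and then conclude shrinking by a modulus-summation argument. The paper's own ``proof'' is in fact even terser than yours: it simply asserts the existence of a principal nest $P_{n_k}$ around $\tb{e}$ itself with $\tu{mod}\geq\mu$, cites the references, and stops. You try to add content by building the nest around a critical end $\tb{c}\in\omega\tu{Crit}(\tb{e})$ and transporting the moduli back to $\tb{e}$; that is a reasonable expansion, and you are right to flag the translation to the box-mapping framework as the crux.

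However, the transport step as you state it does not quite go through. You define $V_k=L_\tb{e}(W_k)$ via the first-entry time $s_k$ of $\tb{e}$ into $W_k$, and claim $\tu{mod}(V_k\sm\ol{V_{k+1}})\geq\mu/d^\kappa$ by ``pulling back the principal-nest annuli through $f^{s_k}$.'' But $V_k\sm\ol{V_{k+1}}$ is not the $f^{s_k}$-preimage of $W_k\sm\ol{W_{k+1}}$: one would need $f^{s_k}(V_{k+1})=W_{k+1}$, which requires $f^{s_k}(\tb{e})\subset W_{k+1}$, and there is no reason the orbit should already be inside $W_{k+1}$ at the first entry time into $W_k$. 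The same issue undercuts the alternative you sketch: to apply Lemma \ref{lem:shrink} you need a \emph{fixed} pair $U\Subset V$ and a uniform degree bound for $f^{s_K}$ on the $f^{-s_K}$-component of $V$ around $\tb{e}$; Lemma \ref{lem:first} only bounds the degree on $L_\tb{e}(W_K)$, and enlarging to the preimage of a fixed $V\supset W_K$ can, a priori, let the orbit revisit critical pieces. These gaps are of the same nature as what the paper elides (it asserts the nest around $\tb{e}$ has complex bounds without deriving it from the critical nest), so your proposal is not worse than the text, but the specific pullback inequality you write down is not justified as stated and needs the actual first-return/landing-map structure from the cited references to make rigorous.
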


\section{Proofs of Theorems \ref{th:Julia} and \ref{th:Fatou}}
\begin{proof}[Proof of Theorem \ref{th:Julia}]
	Let $J_\infty$ be the Hausdorff limit of a sequence of distinct Julia components $J_n$. Assume by contradiction that $J_\infty$ is not a singleton. Clearly $J_\infty\subset J_f$ and $J_\infty$ is contained in a component $K$ of the filled Julia set. Then by \cite[Main Theorem]{QY}, $K$ is eventually periodic. By considering some iterate of $J_n$ and $J_\infty$, we may assume that $f^p(K)=K$.
	
	Take a disk $D$ bounded by an equipotential curve of the Green's function of the basin of infinity, such that $K\subset D$ and $D\cap \tu{crit}(f)=K\cap \tu{crit}(f)$.  Let $D'$ be the component of $f^{-p}(D)$ containing $K$. Then $\ol{D'}\subset D$.
	
	Claim that there is an arc $\gamma$ in the annulus $H=D\sm\ol{D'}$ joining $\partial D$ and $\partial D'$ such that it is disjoint from the filled Julia $K_f$. Indeed, since $K_f\cap H$ is compact, for each sufficiently small $\epsilon>0$, it is covered by finitely many open Euclidean round disks in $H$ whose centers are in $K_f$ and diameters equal $\epsilon$. Let $E_\epsilon$ be the closure of the union of these disks. Then $K_f\cap H=\bigcap_\epsilon E_\epsilon$. Since $\partial H$ can not be separated by components of $K_f$, for a small $\epsilon_0$ the set $E_{\epsilon_0}$ does not separate $\partial H$. Then one can find an arc $\gamma\subset H$ avoiding the finitely many components of $E_{\epsilon_0}$ joining $\partial D$ and $\partial D'$.
	
	Consider the covering $g=f^p:D'\sm K\to D\sm K$ whose degree is $k\geq 2$. Then $g$ is expanding with respect to the hyperbolic metric $\rho$ on $D\sm K$. Precisely, there exists $\delta>0$ such that  
	$$\rho(z,z')\leq \rho(g(z),g(z'))$$
	whenever $\rho(z,z')<\delta$.
	Let $D''=g^{-1}(D')$. The arcs in $g^{-1}(\gamma)$ cut the annulus $D'\sm \ol{D''}$ into $k$ Jordan domains $W_i$. Let $M$ be the maximal $\rho$-diameters among these domains.  Note that $g^n$ sends any component of $g^{-n}(W_i)$ homeomorphically onto $W_i$ by the Riemann-Hurwitz formula. It follows from the expansion of $g$ that $\rho\tu{-diameter}(W)\leq M,$ where $W$ runs over all components of $g^{-n}(W_i)$ for all $n\geq 0$ and $1\leq i\leq k$. 
	
	All $\{J_n\}$ except finitely many ones are contained in $D''$. Suppose $J_n\subset D''$. Then $J_n$ is eventually iterated into one of $W_i$ under $g$. It follows that $J_n$ is contained in a component the iterated preimage of some $W_i$. Thus, we have $\rho\tu{-diameter}(J_n)\leq M$. 
	
	Since $\{J_n\}$ converges to $\partial K$, the Euclidean diameters of $J_n$ tend to $0$ as $n\to\infty$. Then $J_\infty$ must be a singleton, a contradiction. The proof is complete.	
\end{proof}

\begin{proof}[Proof of Theorem \ref{th:Fatou}]
	According to Theorem \ref{th:Julia}. We may assume $J_f$ is connected and $f$ satisfies Assumptions (A1) and (A2) in Subsection \ref{subsection}. Let $\{\Omega_i\}$ be a sequence of distinct bounded Fatou components. Suppose $\ol{\Omega_i}$ Hausdorff converges to a limit $K$. Then $K\subset J_f$. For the proof, it suffices to show that $K$ is a singleton. We argue by contradiction and assume it is not true.
	
	Let $z\in K\sm Z_\infty$ and $P_n$ be the depth-$n$ puzzle piece containing $z$. Denote by $E=\bigcap\ol{P_n}$. We claim that $K\subset E$. Indeed, there are at most finitely many Fatou components whose boundaries intersect $\partial P_n$. Thus all $\Omega_i$ except finitely many ones are contained in $P_n$. This implies that $K\subset E$.
	
	By definition, $E$ is contained in an end $\tb{e}$, which is satellite or primitive. It follows from Corollary \ref{cor:non-recurrent} and Theorem \ref{thm:recurrent} that $\tb{e}$ can not be wandering. Thus, by applying an iterate of $f$, we may assume that $\tb{e}$ is periodic. 
	
	We first consider the case that $\tb{e}$ is primitive. In this case, $\tb{e}=E$. By Lemma \ref{lem:ends}\,(2), take two puzzle pieces such that $P_{N}\Subset P_{N'}$ and
	$f^{N-N'}(P_N)=P_{N'}$. Assuming that $N'$ is sufficiently large, we can consider the map $$g=f^{N-N'}:P_N\setminus \tb{e}\to P_{N'}\setminus\tb{e}$$ which acts as a covering between the two annuli.
	Let $\mc{K}$ be the collection of depth-$N$ puzzle pieces in the annuli $A=P_{N'}\sm \ol{P_N}$. Let $\mc{U}$ be the collection of Fatou components in $P_N$ whose closures intersect either $\partial P_N$ or some $\partial P$ with $P\in \mc{K}$. It is clear that $\mc{K}\cup \mc{U}$ has finitely many elements. 
	
	Each $\ol{\Omega_i}\subset P_{N}$ under an iterate of $g$ will either run into the annulus $A=P_{N'}\sm \ol{P_{N}}$ or intersect with $\partial P_N$. The first time that $\ol{\Omega_i}$ meets $A\cup\partial P_N$ is denoted by $r_i$. Then $g^{r_i}(\ol{\Omega_i})\subset \ol{Q}$ for some $Q\subset \mc{K}\cup \mc{U}$. By the Riemann-Hurwitz formula, $g^{r_i}:Q_i\to Q$ is conformal, where $Q_i$ is the component of $g^{-r_i}(Q_i)$ containing $\Omega_i$.
	
Note that boundaries of bounded Fatou components are disjoint from $\tb{e}$ by Corollary \ref{cor:disjoint}. Thus, the maximum $\rho$-diameters of elements in $\mc{K}\cup \mc{U}$ are bounded by some constant $C$, where $\rho$ is the hyperbolic metric on $D\sm \tb{e}$ with $D$ a Jordan disk containing $\ol{P_{N'}}$. 
	
	Since $g$ is expanding with respect to the metric $\rho$, it follows that $\rho\tu{-diameter}(\ol{\Omega_i})\leq C$ for all $\ol{\Omega_i}\subset P_N$. This implies that the Euclidean diameters of $\ol{\Omega_i}$ tend to zero as $i\to \infty$. Therefore, $K$ is a singleton, contradicting our assumption. 
	
	We are reduced to the case that $\tb{e}$ is satellite. The approach is similar. Assume $\tb{e}=\tb{e}(\xi)$ with $\xi\in Z$. By Proposition \ref{prop:key}, $\xi\in E$ is repelling and $f^p(E)=E$ with $p\geq 2$. By Lemma \ref{lem:thicken-nest} we obtain two thicken puzzle pieces $\wh{P}_{N}\Subset \wh{P}_{N'}$ containing $E$ in their interiors such that 
	$$g=f^{p(N-N')}:\wh{P}_N\sm E\to \wh{P}_{N'}\sm E$$ is covering between the two annuli. In this case, the annulus $A=\wh{P}_{N'}\sm\wh{P}_N$ is divided by $\partial P^*_{N'}$ into finitely many quadrilaterals, whose collection is denoted by $\mc{K}$. By construction, only finitely many Fatou components whose closures intersect $\partial \wh{P}_{N}$. The collection of them is denoted by $\mc{U}$.   
	
	For each $\ol{\Omega_i}\subset \wh{P}_N$, the first entrance $f^{r_i}(\ol{\Omega_i})$ into $A\cup \partial \wh{P}_N$ is either contained in the closure of an element of $\mc{U}$ or intersects some $Q\in \mc{K}$. In the latter case, since the opposite sides, disjoint from $\partial A$, are formed by rays in the Fatou set, we have $f^{r_i}(\ol{\Omega_i})\subset \ol{Q_i}$. Again, by the expansion of $g$ with respect to the hyperbolic metric $\rho$ on $D\sm \tb{e}$, we conclude that $\rho$-diameter($\ol{\Omega_i})\leq C$. Thus, the Euclidean diameters of $\ol{\Omega_i}$ tend to zero as $i\to\infty$. This contradicts the assumption that $K$ is not a singleton. The proof is complete.
\end{proof}

\vskip 1 em \noindent{\it Acknowledgment}: 
The authors would like very much to thank Yongcheng Yin for helpful discussion. This research is partially supported by the NSF of China.

\end{document}